\providecommand{\U}[1]{\protect\rule{.1in}{.1in}}
\newtheorem{theorem}{Theorem}
\newtheorem{acknowledgement}[theorem]{Acknowledgement}
\newtheorem{corollary}[theorem]{Corollary}
\newtheorem{definition}[theorem]{Definition}
\newtheorem{example}[theorem]{Example}
\newtheorem{fact}[theorem]{Fact}
\newtheorem{lemma}[theorem]{Lemma}
\newtheorem{notation}[theorem]{Notation}
\newtheorem{proposition}[theorem]{Proposition}
\newtheorem{remark}[theorem]{Remark}
\newtheorem{convention}[theorem]{Convention}
\newenvironment{proof}[1][Proof]{\noindent\textbf{#1.} }{\ \rule{0.5em}{0.5em}}
\newcommand{\Ep}{E_1}
\begin{document}
 
	\title{Integration by parts of some non-adapted vector field from Malliavin's lifting approach}

	\author{Zhehua Li}

	\date{\today }
	
	\maketitle
	
	\numberwithin{theorem}{section} \numberwithin{equation}{section}
	
	\begin{abstract}
	 In this paper we propose a lift of vector field $X$ on a Riemannian manifold $M$ to a vector field $\tilde{X}$ on the curved Cameron-Martin space $H\left(
			M\right)$ named orthogonal lift. The construction of this lift is based on a least square spirit with respect to a metric on $H(M)$ reflecting the damping effect of Ricci curvature. Its stochastic extension gives rise to a non-adapted Cameron-Martin vector field on $W_o(M)$. In particular, if $M=\mathbb{R}^d$ with Euclidean metric, then the damp disappears and the lift reduces to the well-known Malliavin's lift. We establish an integration by parts formula for these first order differential operators.
	
	\end{abstract}
	\tableofcontents{}
	
	{}

\section{Introduction\label{cha.1}}
\subsection{Differential Structure on Path Spaces}\label{sec1}
Throughout this paper, we fix $\left(M^{d},g,\nabla ,o\right)$ to be a pointed complete Riemannian manifold of dimension $d$ with Riemannian metric $g$, Levi-Civita covariant derivative $\left(\nabla\right)$, and base point $o\in M$.
We further let 
\[
W_o\left(M\right):=\left\{ \sigma\in C\left(\left[0,1\right]\mapsto M\right)\mid\sigma\left(0\right)=o\right\} 
\]
be the \textbf{Wiener space} on $M$ and let $\nu$ be the \textbf{Wiener measure} on
$W_o\left(M\right)$---i.e. the law of $M$--valued Brownian motion
which starts at $o\in M.$ In order to highlight the effect of curvature in our paper we reserve the symbol $\left(W_0(\mathbb{R}^d),\mu\right)$ for the Wiener space and Wiener measure on $W_0(\mathbb{R}^d)$ and refer to this pair as the classical Wiener space. In contrast, $\left(W_o(M),\nu\right)$ is usually refereed to as curved Wiener space.

Differential calculus on $W_o(M)$ which is compatible with $\nu$ has been extensively explored and has been the main tool of modern stochastic analysis. The first question in this direction is to specify a differential structure (tangent space $\mathcal{X}$ of the path space) that is compatible with Wiener measure $\nu$, i.e. for any vector field (first order differential operator) $Y\in \mathcal{X}$, we can find an \textquotedblleft integral curve\textquotedblright or \textquotedblleft flow\textquotedblright      $\phi_t$ at least in probability, such that 
\[Y(\phi_t)=\frac{d}{dt}\phi_t\text{ or }Yf(\phi_t)=\frac{d}{dt}f(\phi_t)\text{ for some }\nu-\text{measurable function }f.\]
A minimum requirement to achieve the above result is the well-definedness of $f(\phi_t)$, i.e. the law of $\phi_t:W_o(M)\to W_o(M)$ should be equivalent to $\nu$. Cameron and Martin \cite{CameronandMartin1944} first proposed a differential structure named Cameron-Martin space which is further developed as the most natural tangent space on abstract Wiener space, see Theorem \ref{thm4.3}.
\begin{definition}[Cameron-Martin space]
	Let
	\begin{equation*}
	H\left(\mathbb{R}^d\right):=\left\{ \sigma\in C\left(\left[0,1\right]\mapsto \mathbb{R}^d\right):\sigma\left(0\right)=0\text{ , }\sigma\text{ is a.c. and }\int_{0}^{1}\left\vert \sigma^{\prime}\left(s\right)\right\vert ^{2}ds<\infty\right\}
	\end{equation*}be the\textbf{ Cameron-Martin space} on $\mathbb{R}^d$. $\left(\text{Here a.c. means absolutely continuous.}\right)$
\end{definition}
\begin{theorem}[Cameron-Martin]\label{thm4.3}For any $h\in H\left(\mathbb{R}^{d}\right),$
	consider the flow $\phi_{t}^{h}$ generated by $h$, i.e. for any
	$w\in W_0\left(\mathbb{R}^{d}\right)$, $\phi_{t}^{h}\left(w\right)=w+th.$ Notice that $\phi_{t}^{h}$
	is the flow of the vector field $D_{h}:=\frac{\partial}{\partial h}.$
	Then the pull--back measure $\mu^{h}\left(\cdot\right):=\left(\phi_{1}^{h}\right)_{*}\mu\left(\cdot\right)=\mu\left(\cdot-h\right)$
	and Wiener measure $\mu$ are equivalent. \end{theorem}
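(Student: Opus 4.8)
The plan is to prove the stronger quantitative statement that $\mu^{h}$ is absolutely continuous with respect to $\mu$ with an explicit, strictly positive Radon--Nikodym density; mutual absolute continuity (equivalence) then follows immediately from positivity. Since Wiener measure is determined by its finite-dimensional marginals, which are centered Gaussians, the first step is to pass to finite dimensions. Fix a partition $\pi = \{0 = t_0 < t_1 < \dots < t_n = 1\}$ and let $\mathcal{F}_\pi$ be the $\sigma$-algebra generated by the increments $w(t_i) - w(t_{i-1})$. Under $\mu$ these increments are independent with $w(t_i)-w(t_{i-1}) \sim N(0,(t_i-t_{i-1})I_d)$, so the shift $w \mapsto w + h$ merely recenters each Gaussian increment by $h(t_i) - h(t_{i-1})$. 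First I would invoke the elementary finite-dimensional Gaussian translation formula to write the density of $\mu^{h}$ against $\mu$ on $\mathcal{F}_\pi$ as the exponential
\begin{equation*}
M_\pi(w) = \exp\!\left( \sum_{i=1}^n \frac{\langle h(t_i)-h(t_{i-1}),\, w(t_i)-w(t_{i-1})\rangle}{t_i - t_{i-1}} - \frac12 \sum_{i=1}^n \frac{|h(t_i)-h(t_{i-1})|^2}{t_i - t_{i-1}} \right).
\end{equation*}

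Next I would organize the refinement of partitions into a filtration and observe that the family $\{M_\pi\}$ is a nonnegative martingale of constant expectation $1$ with respect to it; this is exactly the consistency statement that the densities on coarser $\sigma$-algebras are the conditional expectations of those on finer ones. Letting the mesh of $\pi$ tend to zero, the two Riemann--Stieltjes sums converge, the first to the Wiener (Paley--Wiener) integral $\int_0^1 \langle h'(s), dw(s)\rangle$ and the second to the Cameron--Martin energy $\int_0^1 |h'(s)|^2\,ds = \|h\|_H^2$, because $h$ is absolutely continuous with square-integrable derivative. The candidate limiting density is therefore
\begin{equation*}
M_\infty(w) = \exp\!\left( \int_0^1 \langle h'(s), dw(s)\rangle - \tfrac12 \|h\|_H^2 \right).
\end{equation*}

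To promote almost-sure convergence of the martingale to convergence in $L^1$, and hence to identify $M_\infty$ as the genuine density $d\mu^{h}/d\mu$, I would compute $\mathbb{E}_\mu[M_\infty]$ directly: the Wiener integral $\int_0^1 \langle h', dw\rangle$ is centered Gaussian with variance $\|h\|_H^2$, so its exponential moment is $\exp(\tfrac12\|h\|_H^2)$, giving $\mathbb{E}_\mu[M_\infty] = 1$. Matching this with the constant expectation $1$ of the martingale upgrades the almost-sure convergence to $L^1$ convergence by Scheff\'e's lemma, and a standard monotone-class argument identifies $M_\infty$ as the Radon--Nikodym derivative on the full Borel $\sigma$-algebra. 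Finally, since $M_\infty > 0$ everywhere, the measure $\mu$ is in turn absolutely continuous with respect to $\mu^{h}$ with density $1/M_\infty$, so the two are equivalent, which is the assertion. The main obstacle I anticipate is not the algebra but the limiting analysis: ensuring that the finite-dimensional densities converge in $L^1$ rather than merely almost surely, i.e. ruling out a loss of mass in the limit. This is precisely where the square-integrability of $h'$ is indispensable, as it both guarantees convergence of the quadratic sums to the finite energy $\|h\|_H^2$ and forces the martingale's expectation to stay pinned at $1$.
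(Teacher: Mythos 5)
The paper does not prove this theorem: it is stated as classical background and attributed to \cite{CameronandMartin1944}, so there is no in-paper argument to compare against. Your proposal is the standard modern proof and it is correct in substance: the finite-dimensional Gaussian translation density on $\mathcal{F}_\pi$ is exactly as you write, the family $\{M_\pi\}$ is a nonnegative closed-filtration martingale of mean one, the limit is the stochastic exponential $M_\infty=\exp\bigl(\int_0^1\langle h',dw\rangle-\tfrac12\|h\|_H^2\bigr)$, and $\mathbb{E}_\mu[M_\infty]=1$ together with Scheff\'e upgrades a.s.\ convergence to $L^1$, identifying $M_\infty$ as $d\mu^h/d\mu$; strict positivity ($\mu$-a.s., which is all you need) then gives equivalence.

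Two points deserve slightly more care than your write-up gives them. First, the sum $\sum_i \langle h(t_i)-h(t_{i-1}),w(t_i)-w(t_{i-1})\rangle/(t_i-t_{i-1})$ does not converge pathwise as a Riemann--Stieltjes sum ($w$ has unbounded variation); rather, it equals the Wiener integral $\int_0^1\langle h'_\pi(s),dw(s)\rangle$ where $h'_\pi=\mathbb{E}[h'\mid\pi]$ is the piecewise-constant averaging of $h'$ over the partition intervals, and since $h'_\pi\to h'$ in $L^2([0,1])$ the Paley--Wiener isometry gives convergence in $L^2(\mu)$ only. Along a refining sequence of partitions the martingale convergence theorem supplies an a.s.\ limit, which you then identify with the $L^2$ limit $M_\infty$; this identification step is what licenses the subsequent application of Scheff\'e, and it should be said explicitly. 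Second, to conclude that $M_\infty$ is the density on the full Borel $\sigma$-algebra you need that the increment $\sigma$-algebras over all partitions generate it, which holds because paths in $W_0(\mathbb{R}^d)$ are continuous and hence determined by their values on a countable dense set of times; your appeal to a monotone-class argument is the right mechanism but rests on this generation fact. Neither point is a gap in the strategy --- both are routine repairs --- so the proof goes through as planned, and it is indeed the square-integrability of $h'$ that keeps the mean pinned at one and prevents loss of mass in the limit, exactly as you anticipated.
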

The map $\phi_{t}^{h}$ is usually called Cameron-Martin shift and the phenomenon
described in Theorem \ref{thm4.3} is called quasi-invariance of $\mu$
under the Cameron-Martin shift. The generalization of Cameron-Martin
Theorem to curved Wiener space came quite a while later in 1990s.
Driver initiated the geometric Cameron-Martin theory in \cite{Driver1992}
and \cite{Driver1994a} where he considered a Cameron-Martin vector field $X^h$ (see Definition \ref{cmv}) in which $h\in \left\{f\in C^{1}\left(\left[0,1\right]\right): f\left(0\right)=0 \right\}\subset H\left(\mathbb{R}^{d}\right).$

\begin{theorem}[Driver] \label{thm4.21}Let $\left(M,g,o,\nabla\right)$ be a compact
	manifold and $h$ be as above, then for any $\sigma\in W_o\left(M\right),$
	there exists a unique flow $\phi_{t}^{h}$ of $X^{h}$, i.e. $\phi_{t}^{h}:W_o\left(M\right)\mapsto W_o\left(M\right)$
	satisfying: 
	\[
	\frac{d}{dt}\phi_{t}^{h}\left(\sigma\right)=X^{h}\left(\phi_{t}^{h}\left(\sigma\right)\right)\text{ with }\phi_{0}^{h}=I
	\]
	and $\nu_{t}^{h}\left(\cdot\right):=\left(\phi_{t}^{h}\right)_{*}\nu$
	is equivalent to $\nu.$ \end{theorem}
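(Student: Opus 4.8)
The plan is to transport the whole problem from the curved Wiener space $\left(W_o(M),\nu\right)$ to the classical flat Wiener space $\left(W_0(\mathbb{R}^d),\mu\right)$ by means of the Eells--Elworthy--Malliavin stochastic development, and there to reduce the claimed quasi-invariance to the classical Cameron--Martin Theorem \ref{thm4.3} together with the rotational invariance of Brownian motion. First I would set up the stochastic development (It\^o) map on the orthonormal frame bundle $O(M)$: given a flat path $b\in W_0(\mathbb{R}^d)$, solve the horizontal Stratonovich SDE $du_s=\sum_i H_i(u_s)\circ db_s^i$ with $u_0$ a fixed frame over $o$, and set $\sigma_s:=\pi(u_s)$. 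This produces a map $J\colon W_0(\mathbb{R}^d)\to W_o(M)$ with $J_*\mu=\nu$, under which stochastic parallel transport is $u_s u_0^{-1}$. Because $M$ is compact, the frame SDE has a global solution and the curvature $\Omega$ and Ricci tensor are uniformly bounded, a fact that underlies every estimate below. Since $J$ is almost surely a bijection intertwining the two measures, quasi-invariance of $\nu$ under $\phi_t^h$ is equivalent to quasi-invariance of $\mu$ under the conjugated flow $\tilde\phi_t^h:=J^{-1}\circ\phi_t^h\circ J$ on $W_0(\mathbb{R}^d)$.

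The central computation is to pull the vector field $X^h$ (Definition \ref{cmv}) back through $J$. Expressing $X^h(\sigma)_s$ in terms of the parallel transport of $h_s$ and differentiating the horizontal SDE in the flow parameter $t$, an application of It\^o's formula and the structure equations of $O(M)$ shows that $\tilde\phi_t^h$ is generated on $W_0(\mathbb{R}^d)$ by a perturbation of the driving Brownian motion of the form
\[
\frac{d}{dt}\,b_s^t \;=\; k_s^t \;+\; \int_0^s \theta_r^t\,db_r^t,
\]
where $k^t$ is a genuine Cameron--Martin path assembled from $h$ and its damping by the Ricci curvature, and $\theta^t$ is an adapted, $\mathfrak{so}(d)$-valued (hence skew-symmetric) process coming from the curvature two-form along the path. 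The skew-symmetry is the structural heart of the argument: the second term infinitesimally rotates the Brownian path rather than translating it, so it preserves $\mu$ exactly, while only the first term contributes a genuine Cameron--Martin shift.

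With this decomposition in hand I would finish by a Girsanov change of measure. The rotational part leaves $\mu$ invariant because $\int_0^{\cdot} R_r\,db_r$ is again a Brownian motion for any adapted $O(d)$-valued $R$ (L\'evy's characterization), and the Cameron--Martin part is quasi-invariant by Theorem \ref{thm4.3}; combining the two yields an explicit exponential Radon--Nikodym density for $\nu_t^h$ with respect to $\nu$, whose integrand is controlled by $\dot h$ and $\mathrm{Ric}_{u}\,h$. Uniform boundedness of the Ricci curvature (from compactness) gives the Novikov-type integrability needed to conclude that this density is a genuine probability density, hence that $\nu_t^h\sim\nu$. The same SDE, solved by Picard iteration on the compact manifold $O(M)$ with Gronwall estimates, simultaneously delivers global existence and uniqueness of the flow $\phi_t^h$.

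The step I expect to be the main obstacle is making the pullback and the flow rigorous rather than formal. A priori $X^h$ is defined only $\nu$-almost everywhere through stochastic---and therefore non-pathwise---parallel transport, so one must show the flow $\phi_t^h(\sigma)$ exists for $\nu$-a.e.\ $\sigma$ and depends jointly measurably on $(t,\sigma)$, and one must control the anticipating (non-adapted) terms that perturbing the driving Brownian motion can introduce. Handling these points cleanly relies on the compactness of $M$ to bound the curvature and its covariant derivatives and to secure the exponential integrability of the Girsanov density.
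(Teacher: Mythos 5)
Your overall route is, in outline, the historically correct one: the paper itself offers no proof of Theorem \ref{thm4.21} (it cites \cite{Driver1992} and \cite{Driver1994a}), and Driver's original argument does exactly what you sketch at the structural level --- transfer everything to $\left(W_0(\mathbb{R}^d),\mu\right)$ through the development map, compute the pull-back of $X^h$, observe that the perturbation of the driving noise splits into a Cameron--Martin drift built from $h$ and the Ricci term plus an adapted $\mathfrak{so}(d)$-valued ``infinitesimal rotation'' coming from the curvature $2$-form, and conclude quasi-invariance by Girsanov, with L\'evy's characterization explaining why the rotation part is measure-neutral. So the decomposition you identify as ``the structural heart'' is indeed the right heart.

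There are, however, two genuine gaps. First, your claim that ``the same SDE, solved by Picard iteration on the compact manifold $O(M)$ with Gronwall estimates, simultaneously delivers global existence and uniqueness of the flow $\phi_t^h$'' conflates two different equations: the horizontal SDE on $\mathcal{O}(M)$ (which compactness does handle) and the flow equation $\frac{d}{dt}\phi_t^h=X^h(\phi_t^h)$, which is an ODE on the \emph{path space} whose right-hand side involves stochastic parallel transport and stochastic integrals, hence is defined only $\nu$-a.e., is not pathwise continuous, and admits no naive Picard--Gronwall treatment; as the path evolves in $t$ the very stochastic integrals defining $X^h$ must be re-interpreted under a moving measure class. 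This existence step is the bulk of Driver's paper, is precisely where the hypothesis $h\in C^1$ (rather than $h\in H(\mathbb{R}^d)$, the later Hsu and Enchev--Stroock extensions) enters through piecewise-smooth approximation arguments, and your proposal names it as ``the main obstacle'' without supplying any mechanism to overcome it. Second, quasi-invariance does not follow by ``combining'' the rotational invariance of $\mu$ with Theorem \ref{thm4.3} applied to the two pieces separately: the time-$t$ flow of a sum of vector fields is not the composition of a measure-preserving rotation with a Cameron--Martin shift. One must show that under $\mu$ the transformed coordinate process remains an adapted semimartingale whose martingale part has identity quadratic variation --- which requires proving that the rotation kernel stays adapted \emph{along the flow}, not merely at $t=0$ --- and then run Girsanov for the full drift with a Novikov-type bound, finally propagating the a.e.-defined density consistently in $t$. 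Skew-symmetry guarantees the infinitesimal statement, but the integrated one needs this full argument, and the uniqueness asserted in the theorem likewise only makes sense, and must be proved, within the class of flows whose laws remain equivalent to $\nu$.
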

The existence of the flow and the quasi-invariance of Wiener measure under this flow were later extended to Cameron-Martin vector field $X^{h}$ with $h\in H\left(\mathbb{R}^{d}\right)$
in \cite{Hsu1995} and \cite{StroockEnchev1995} and then to a geometrically
and stochastically complete Riemannian manifold in \cite{Hsu2002a}
and \cite{HsuOuyang2009}. Meanwhile certain flaws of these Cameron-Martin vector
fields also arise. For example, it has been known that this space of vector fields does not form a Lie Algebra, see \cite{Cruzeiro1996} and \cite{Aida97}, and
 also the It\^{o} map fails to be a diffeomorphism from $W_0(\mathbb{R}^d)$ to $W_o(M)$. Motivated by these issues, Driver introduced more general Cameron-Martin vector field in \cite{Driver1995a}, see also \cite{Cruzeiro1996}, where $h$ admits some randomness. It has been known that if $h$ is certain adapted Brownian semi-martingale, see Definition \ref{def.2.1}, then a quasi-invariant flow can be constructed on $\left(W_0(\mathbb{R}^d),\mu\right)$ and with the help of It\^{o} map, an approximate flow (not a real flow) can be constructed to define $X^h$ on $\left(W_o(M),\nu\right)$. 
In this paper we consider a class of non-adapted Cameron-Martin vector field on $W_o(M)$, see Definition \ref{def Oc}. The reason to study these vector fields is that
 they naturally arise from Malliavin's lifting approach applied to a curved Wiener space where damp is considered. Since Malliavin's lifting approach is the key tool of stochastic analysis in the study of hypo-elliptic differential operators, see \cite{Bell05}, and damping effect naturally appears because of non-trivial curvature, see \cite{Cruzeiro02} and \cite{Elworthy06}, it should be useful to study these non-adapted Cameron-Martin vector fields.
       
\subsection{Riemannian Metrics on $H(M)$ and Lifting Technique}\label{subsec1}

In this section we introduce the Cameron-Martin space on $\left(M,o\right)$ which is a sub-manifold of $W_o(M)$. Its importance are twofold: First, the differential structure on $W_o(M)$, i.e. Cameron-Martin vector field (see Definition \ref{cmv}) can be viewed as a stochastic extension of the differential structure on $H(M)$. Secondly, Riemannian metrics on $H(M)$ give rise to a technique that allows us to lift a vector field from $M$ to $W_o(M)$.    
\begin{definition}[Cameron-Martin space on $\left(M,o\right)$]\label{CM}
	Let
	\begin{equation*}
	H\left(M\right):=\left\{ \sigma\in C\left(\left[0,1\right]\mapsto M\right):\sigma\left(0\right)=o\text{ , }\sigma\text{ is a.c. and }\int_{0}^{1}\left\vert \sigma^{\prime}\left(s\right)\right\vert _g^{2}ds<\infty\right\}
	\end{equation*}be the\textbf{ Cameron-Martin space} on $\left(M,o\right)$. $\left(\text{Here a.c. means absolutely continuous.}\right)$
\end{definition}
\begin{notation}
Let $\Gamma\left(TM\right)$ be differentiable sections of $TM$ and $\Gamma_{\sigma}\left(TM\right)$ be differentiable sections of $TM$ along $\sigma\in H\left(M\right)$.
\end{notation}
The space, $H\left(M\right)$, is an infinite dimensional Hilbert manifold which is a central object in problems related to the calculus of variations on $W_o(M)$. Klingenberg  \cite{Klingenberg77} contains a good exposition of the manifold of paths. In particular, Theorem 1.2.9 in \cite{Klingenberg77} presents the differentiable structure of $H(M)$ in terms of atlases. For our purpose, it suffices to just specify its tangent bundle $TH(M)$ and its Riemannian metrics. In this paper we define two metrics on $H\left(M\right)$. $G^1$-metric seems to be a natural metric to geometers, however a damped metric $\left<\cdot,\cdot\right>_{Ric}$ involving Ricci curvature is more widely seen in the literature of stochastic geometry as a way to represent the damping effect of curvature. 
\begin{definition}[$G^1-$metric]
For any $\sigma\in H\left(M\right)$ and $X,Y\in \Gamma_\sigma^{a.c. }\left(TM\right)$, We define a metric $G^1$ as follows:
\[
\left<X,Y\right>_{G^{1}}=\int_{0}^{1}\left\langle \frac{\nabla X}{ds}\left(s\right),\frac{\nabla Y}{ds}\left(s\right)\right\rangle _{g}ds,
\]
where $\Gamma_\sigma^{a.c.}\left(TM\right)$ is the set of absolutely continuous vector fields along $\sigma$ with finite
energy, i.e. $\int_{0}^{1}\left\langle \frac{\nabla X}{ds}\left(s\right),\frac{\nabla X}{ds}\left(s\right)\right\rangle _{g}ds<\infty$.
\end{definition}

\begin{remark}
To see that $G^1$ is a metric on $H\left(M\right)$, we identify the tangent space $T_{\sigma}H\left(M\right)$ with $\Gamma_\sigma^{a.c.}\left(TM\right)$. To motivate this identification, consider a differentiable one-parameter family of curves $\sigma_t$ in $H\left(M\right)$ such that $\sigma_0=\sigma$. By definition of tangent vector, $\frac{d}{dt}\mid_0\sigma_t\left(s\right)$ should be viewed as a tangent vector at $\sigma$. This is actually the case, for detailed proof, see Theorem 1.3.1 in \cite{Klingenberg77}.  
\end{remark}
\begin{definition}\label{dam}Let $\left\langle \cdot,\cdot\right\rangle _{Ric}$
be the \textbf{damped metric }on $TH\left(M\right)$ defined by 
\begin{equation}
\left\langle X,Y\right\rangle _{Ric}:=\int_{0}^{1}\left\langle \left[\frac{\nabla}{ds}+\frac{1}{2}Ric\right]X\left(s\right),\left[\frac{\nabla}{ds}+\frac{1}{2}Ric\right]Y\left(s\right)\right\rangle_g ds
\end{equation}
for all $X,Y\in\Gamma_{\sigma}^{a.c.}\left(TM\right)=T_{\sigma}H\left(M\right)$
and $\sigma\in H\left(M\right)$. Here $Ric$ is the Ricci tensor, see Notation \ref{Geo}. \end{definition}
\begin{remark}
A damped metric or connection naturally appears when a manifold is involved in order to illustrate the damping effect that comes from the curvature. Other than the literature mentioned at the end of Section \ref{sec1}, in another paper of the author [arxiv], we find an interesting phenomenon that if one discretizes $\left(H(M),G^1\right)$ by considering a class of piecewise geodesic space $H_{\mathcal{P}}(M)$ adapted to a partition $\mathcal{P}$ of time with a metric $G_{\mathcal{P}}^1$ which is the Riemann sum approximation to the $G^1$ metric, then  the orthogonal lift with respect to $G_{\mathcal{P}}^1$-metric as well as its adjoint converges to those of the orthogonal lift with respect to the damped metric on $W_o(M)$.   
\end{remark}
In the category of differential geometry lifting approach is fairly concise to state. Given two differentiable manifold  $N, M$ and a submersion $F:N\to M$, for any differentiable function $f$ on $M$, its lift $\tilde{f}$ with respect to $F$ is simply defined to be $f\circ F$ and for any $X\in \Gamma(TM)$, $\tilde{X}\in \Gamma(TN)$ is called a \textbf{lift} of $X$ iff $F_*\tilde{X}=X$. Since $F$ is a submersion, the existence of $\tilde{X}$ is trivial but one should not expect uniqueness. Based on simple definition one can obtain 
\[\tilde{X}\tilde{f}=\widetilde{Xf}.\]
On $\left(W_o(M),\nu\right)$ one would pursue the above formula in an average sense, i.e.
\begin{equation}
\mathbb{E}_\nu\left[\tilde{X}\tilde{f}\right]=\mathbb{E}_\nu\left[\widetilde{Xf}\right]\text{  }\forall f\in C_b^1(M).\label{1.1}
\end{equation}
In this paper we found a lift named orthogonal lift on $W_o(M)$ with respect to the end point evaluation map $\Ep$ in the following way: first we establish a unique lift of a vector field $X\in \Gamma(TM)$ to $\Gamma(TH(M))$ by requiring it to have minimum norm induced from the damped metric defined in Definition \ref{dam}, then a Cameron-Martin vector field is obtained by stochastic extension.

Since the orthogonal lift $\tilde{X}$ is a non-adapted vector field on the curved Wiener space, it is not clear whether $\tilde{X}$ is in the domain of the divergence operator on $W_o(M)$ or not. To the author's knowledge, even the characterization of the domain of the divergence operator on $W_0(\mathbb{R}^d)$ is not quite satisfactory. Therefore in this paper we adopt a weaker notion of differentiability than the well-known $H-$derivative. However it will be shown that it is enough to derive an integration by parts formula. 

\subsection{Main Theorems\label{sec.1.4}}
In this section we state the main results of this paper while avoiding many technical details.
\begin{lemma}
	Let $\Ep:W_o(M)\to M$ be the \textbf{End point evaluation map}, i.e. $\forall \sigma\in W_o(M)$, $\Ep(\sigma)=\sigma(1)$, then $\Ep\mid_{H(M)}$ is a submersion.
\end{lemma}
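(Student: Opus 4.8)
The plan is to verify directly that the differential of $\Ep|_{H(M)}$ is surjective at every point $\sigma\in H(M)$, since (once smoothness of $\Ep|_{H(M)}$ is granted) that is precisely what it means for a map between Hilbert manifolds to be a submersion. The smoothness of endpoint evaluation is a standard feature of the differentiable structure on the path manifold (cf. \cite{Klingenberg77}), and the usual requirement that the kernel of the differential split is automatic here: the target $T_{\sigma(1)}M$ is finite dimensional, so a surjective bounded linear map onto it has closed kernel of finite codimension $d$, which is therefore complemented.

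First I would make the two tangent spaces explicit. Using the identification $T_{\sigma}H(M)\cong\Gamma_{\sigma}^{a.c.}(TM)$ recalled above, and remembering that every path in $H(M)$ is pinned at $\sigma(0)=o$, the tangent space at $\sigma$ consists of those finite-energy absolutely continuous vector fields $X$ along $\sigma$ satisfying $X(0)=0$. The target $T_{\sigma(1)}M$ is the ordinary tangent space of $M$ at the endpoint. I would then compute the differential: for a one-parameter family $\sigma_t$ with $\sigma_0=\sigma$ and $\frac{d}{dt}\big|_0\sigma_t=X$, one has $\Ep(\sigma_t)=\sigma_t(1)$, so $(\Ep|_{H(M)})_*X=\frac{d}{dt}\big|_0\sigma_t(1)=X(1)$. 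Thus the differential is the evaluation-at-$1$ map $X\mapsto X(1)$; it is bounded, since $X(0)=0$ together with Cauchy--Schwarz gives $|X(1)|_g\le\int_0^1\big|\frac{\nabla X}{ds}\big|_g\,ds\le\|X\|_{G^1}$.

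The heart of the argument is to exhibit, for an arbitrary $v\in T_{\sigma(1)}M$, a tangent vector $X\in T_{\sigma}H(M)$ with $X(1)=v$. I would let $V$ be the parallel vector field along $\sigma$ determined by $V(1)=v$ (i.e. $\frac{\nabla V}{ds}\equiv 0$) and set $X(s):=s\,V(s)$. Then $X(0)=0$ and $X(1)=v$, while $\frac{\nabla X}{ds}=V+s\frac{\nabla V}{ds}=V$, so the energy is $\int_0^1\big|\frac{\nabla X}{ds}\big|_g^2\,ds=\int_0^1|V(s)|_g^2\,ds=|v|_g^2<\infty$. Hence $X$ genuinely lies in $T_{\sigma}H(M)$ and satisfies $(\Ep|_{H(M)})_*X=v$, proving surjectivity and therefore the claim.

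I do not anticipate a serious obstacle. The only points requiring care are (i) correctly folding the boundary condition $X(0)=0$, forced by the pinned initial point, into the description of $T_{\sigma}H(M)$, and (ii) checking that the candidate $X$ has finite energy—i.e. lies in the Cameron--Martin tangent space rather than being merely a smooth vector field along $\sigma$. Both are resolved by the parallel-transport construction, which is designed to produce a covariant derivative of constant norm $|v|_g$ along the whole curve.
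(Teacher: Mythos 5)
Your proof is correct and is essentially the paper's own argument: the paper's candidate $X^h$ with $h(s)=s\,u(\sigma,1)^{-1}v$, once unwound through $X^h(\sigma,s)=u(\sigma,s)h(s)$, is exactly your field $s\,V(s)$ with $V$ the parallel field along $\sigma$ satisfying $V(1)=v$, and both proofs conclude surjectivity of the differential from it. The only differences are cosmetic: you make explicit the boundedness of the evaluation map and the automatic splitting of its kernel (finite codimension), while the paper phrases the construction in horizontal-lift notation and additionally records surjectivity of $\Ep\mid_{H(M)}$ itself, which the standard definition of submersion you use does not require.
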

\begin{proof}
	Since $M$ is complete, for any $x\in M$, there exists a geodesic $\sigma\in H(M)$ such that $\sigma(0)=o$ and $\sigma(1)=x$. So $\Ep\mid_{H(M)}$ is surjective. Then for any $\sigma\in H(M)$ and $v\in T_{\sigma(1)}M$, set $h(s)=su^{-1}(1)v, 0\leq s\leq 1$ and it is trivial to check $X^h(\sigma,\cdot)\in T_\sigma(H(M))$ and $\left(\Ep\mid_{H(M)}\right)_{\sigma,*}(X^h)=Y$. So $\left(\Ep\mid_{H(M)}\right)_{\sigma,*}$ is surjective and thus $\Ep\mid_{H(M)}$ is a submersion. $($I am sorry for using some notations that have not been set up. Here $u(\sigma,\cdot)$ is the parallel translation along $\sigma$, see Definition \ref{para} and $X^h$ is defined in Notation \ref{pvf} $)$. 
\end{proof}
\begin{theorem}[Orthogonal Lift on $H(M)$]\label{thm1}
	If $M$ has non-positive and bounded sectional curvature, then for any $X\in \Gamma(TM)$, there is a unique $\tilde{X}\in \Gamma(TH(M))$ such that for any $\sigma\in H(M)$, 
	\begin{equation}
	\left\Vert\tilde{X}(\sigma)\right\Vert_{Ric}=\inf\left\{\left\Vert Y(\sigma)\right\Vert_{Ric}: {\Ep}_{*}Y=X\right\}.
	\end{equation}
	where $\left\Vert\cdot\right\Vert_{Ric}$ is the norm on $T_\sigma H(M)$ induced by the damped metric in Definition \ref{dam}. 
\end{theorem}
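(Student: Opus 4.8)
The plan is to read the displayed equality as a fiberwise minimum-norm (orthogonal projection) problem in a Hilbert space and then to check that the pointwise minimizers assemble into a smooth section. Fix $\sigma\in H(M)$. Recalling that $T_\sigma H(M)$ is identified with the a.c. vector fields $Z$ along $\sigma$ having $Z(0)=0$ (every path in $H(M)$ starts at $o$, so admissible variations fix the initial point), the differential $\left(\Ep\right)_{*,\sigma}:T_\sigma H(M)\to T_{\sigma(1)}M$ is simply the evaluation $Z\mapsto Z(1)$. Hence the side condition $\left(\Ep\right)_*Y=X$ constrains $Y(\sigma)$ only through its endpoint, $Y(\sigma)(1)=X(\sigma(1))$, and the problem at $\sigma$ reads: minimize $\|Z\|_{Ric}$ among $Z\in T_\sigma H(M)$ with $Z(1)=X(\sigma(1))$. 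First I would verify that $\left(T_\sigma H(M),\langle\cdot,\cdot\rangle_{Ric}\right)$ is a genuine Hilbert space: writing $A:=\frac{\nabla}{ds}+\tfrac12 Ric$, the equation $AZ=W,\ Z(0)=0$ has, for each $W\in L^2$, a unique a.c. solution (a linear first-order ODE, solved by variation of constants after trivializing along $\sigma$), and boundedness of $Ric$ makes the damped and $G^1$ energies comparable; thus $A$ is a linear isometry of $T_\sigma H(M)$ onto $L^2$ along $\sigma$, so $\|\cdot\|_{Ric}$ is a complete inner-product norm. The admissible set is a coset of the closed hyperplane $\ker(\Ep)_{*,\sigma}=\{Z:Z(1)=0\}$, and it is nonempty precisely because $\Ep\mid_{H(M)}$ is a submersion. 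The projection theorem then yields a unique minimizer, given in closed form by the pseudoinverse
\[
\tilde X(\sigma)=L^{*}\left(LL^{*}\right)^{-1}X(\sigma(1)),\qquad L:=\left(\Ep\right)_{*,\sigma},
\]
where $L^{*}:T_{\sigma(1)}M\to T_\sigma H(M)$ is the adjoint for $\langle\cdot,\cdot\rangle_{Ric}$ and $g$, and $LL^{*}$ is invertible exactly because $L$ is onto.

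To make the minimizer concrete and to see the positivity of $LL^{*}$ directly, I would trivialize by parallel transport $u(\sigma,s):T_oM\to T_{\sigma(s)}M$, writing $Z(s)=u(\sigma,s)z(s)$ and $\hat R_s:=u(\sigma,s)^{-1}Ric_{\sigma(s)}u(\sigma,s)$, so that $\|Z\|_{Ric}^2=\int_0^1|\dot z+\tfrac12\hat R_s z|^2\,ds$. Setting $w:=\dot z+\tfrac12\hat R_s z$ converts the energy into $\int_0^1|w|^2\,ds$ and the endpoint condition into the single linear constraint $\int_0^1\Phi_r^{-1}w(r)\,dr=\Phi_1^{-1}u(\sigma,1)^{-1}X(\sigma(1))$, where $\Phi$ solves $\dot\Phi_s=-\tfrac12\hat R_s\Phi_s,\ \Phi_0=I$. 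This is a standard least-norm problem whose solution is $w(r)=(\Phi_r^{-1})^{*}G^{-1}c$, with Gram matrix $G:=\int_0^1\Phi_r^{-1}(\Phi_r^{-1})^{*}\,dr$ and $c$ the right-hand side; since each $\Phi_r^{-1}$ is invertible, $G$ is symmetric positive-definite, which re-proves existence and uniqueness and identifies $LL^{*}$, up to the invertible frame change $u(\sigma,1)\Phi_1$, with $G$. Equivalently, the Euler--Lagrange equation of the problem is the self-adjoint boundary value problem $A^{*}A\,\tilde X(\sigma)=0$, that is $\left(-\frac{\nabla}{ds}+\tfrac12 Ric\right)\left(\frac{\nabla}{ds}+\tfrac12 Ric\right)\tilde X(\sigma)=0$ with Dirichlet data $\tilde X(\sigma)(0)=0$ and $\tilde X(\sigma)(1)=X(\sigma(1))$, whose unique solvability is again equivalent to the invertibility of $G$.

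The remaining and, I expect, hardest step is to show that $\sigma\mapsto\tilde X(\sigma)$ is a differentiable section, i.e. $\tilde X\in\Gamma(TH(M))$, rather than merely a pointwise family of minimizers. Working in the Hilbert-manifold charts of $H(M)$ (Theorem 1.2.9 of \cite{Klingenberg77}), I would argue that each ingredient of the closed-form solution depends smoothly on $\sigma$: the parallel transport $u(\sigma,\cdot)$ and the propagator $\Phi$ depend smoothly on the curve through smooth dependence of linear-ODE solutions on their coefficients, $Ric_{\sigma(\cdot)}$ and $X(\sigma(1))$ are smooth because $Ric$ and $X$ are smooth on $M$, and $G$ is a smooth invertible-matrix-valued function of $\sigma$, so $G^{-1}$, hence $w$, $z$, and $\tilde X(\sigma)=u(\sigma,\cdot)z$, are smooth in $\sigma$ as maps into the relevant function spaces. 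This is exactly where the curvature hypotheses enter: boundedness of the sectional curvature yields uniform control of $Ric$ and of parallel transport and Jacobi fields along $\sigma$, keeping the coefficients and $G^{-1}$ bounded and continuously differentiable in $\sigma$, while non-positivity provides the Cartan--Hadamard global structure making $H(M)$ and all of these constructions globally well-defined without cut-locus obstructions. The technical burden is to carry out these smoothness verifications rigorously in the charts and to confirm that the estimates are uniform enough to conclude differentiability of the section.
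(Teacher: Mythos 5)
Your proposal is correct and follows essentially the same route as the paper: after trivializing by parallel transport, your propagator $\Phi$ is the paper's $T_s$ (Definition \ref{def.4.2}), your Gram matrix $G$ is $\int_0^1 T_r^{-1}\left(T_r^{-1}\right)^{\ast}dr$, and your least-norm solution $w(r)=\left(\Phi_r^{-1}\right)^{\ast}G^{-1}c$ reproduces exactly the paper's $\phi_s$ and $J_s=\mathbf{K}_s\mathbf{K}_1^{-1}H$ in Theorem \ref{the.4.8}, which the paper obtains by computing $\Ep^{\ast}$ for the $\alpha$--inner product (Theorems \ref{th.4.1.3} and \ref{the.3.4}) and writing the minimizer as $\Ep^{\ast}\left(\Ep\Ep^{\ast}\right)^{-1}a$ --- the same pseudoinverse you invoke. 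The only divergence is your final chartwise smoothness-of-the-section step, which you flag as the hardest part but which the paper does not actually require: by Notation \ref{pvf} sections of $TH(M)$ are permitted merely measurable coefficients $h:H(M)\to H(\mathbb{R}^d)$, so the closed-form expression already yields $\tilde{X}\in\Gamma(TH(M))$ without differentiability estimates in the Hilbert-manifold charts.
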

If we further consider its stochastic extension to $W_o(M)$, we get a non-adapted Cameron-Martin vector field (Still denoted by $\tilde{X}$), then we can prove:
\begin{theorem}\label{thm2}
	Denote by $\mathcal{D}(\tilde{X})$ the domain of $\tilde{X}$ which is dense on $L^2\left(W_o(M),\nu\right)$, then for any $f,g\in \mathcal{D}(\tilde{X})$, we have
	\begin{equation}
	\mathbb{E}_\nu\left[\tilde{X}f\cdot g\right]=\mathbb{E}_\nu\left[f\cdot\tilde{X}^{\dagger}g\right]
	\end{equation}
	where $\tilde{X}^{\dagger}$ is a densely defined operator on $L^2\left(W_o(M),\nu\right)$ explicitly given in Lemma \ref{lem.5.1}.
\end{theorem}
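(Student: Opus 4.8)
The plan is to reduce the bilinear identity to a single \emph{divergence formula} of the shape $\mathbb{E}_\nu[\tilde{X}F]=\mathbb{E}_\nu[F\,\delta(\tilde{X})]$, where $\delta(\tilde{X})$ is a scalar random variable (the divergence of the orthogonal lift), and then read off $\tilde{X}^{\dagger}$ by a Leibniz manipulation. Concretely, once the divergence formula is available for a single factor, I would apply it to the product $F=fg$, use $\tilde{X}(fg)=(\tilde{X}f)\,g+f\,(\tilde{X}g)$ (legitimate for $f,g$ in the domain under the weak notion of differentiability adopted in the paper), and rearrange to
\[
\mathbb{E}_\nu\big[(\tilde{X}f)\,g\big]=\mathbb{E}_\nu\big[f\,(-\tilde{X}g+\delta(\tilde{X})\,g)\big],
\]
so that $\tilde{X}^{\dagger}g=-\tilde{X}g+\delta(\tilde{X})\,g$, which is the operator recorded in Lemma \ref{lem.5.1}. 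The whole problem therefore collapses to (i) computing $\delta(\tilde{X})$ explicitly and (ii) justifying the divergence formula on a dense class of $F$.

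For (i) I would first make the orthogonal lift explicit. Working in the parallel frame $u(\sigma,\cdot)$ of Definition \ref{para}, write $\tilde{X}=X^{h_X}$, where $h_X(\sigma,\cdot)$ is the Cameron--Martin path furnished by the variational characterization of Theorem \ref{thm1}: minimizing $\|\cdot\|_{Ric}$ subject to $\Ep_{*}Y=X$ is a quadratic problem whose Euler--Lagrange equation, solved with the damped propagator $\Theta$ of $\frac{\nabla}{ds}+\frac12 Ric$, yields $h_X$ in closed form in terms of $\Theta$, $\Theta(1)^{-1}$, $\int_0^1(\Theta\Theta^{T})^{-1}$, and $X(\sigma(1))$. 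The qualitatively decisive point is that $h_X$ depends on the whole of $\sigma$ (in particular on the endpoint $\sigma(1)$), so the associated process $k_X:=h_X'+\tfrac12\,\widetilde{Ric}\,h_X$ is genuinely non-adapted; this is the origin of all the difficulty.

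For (ii) the main device is the integration-by-parts formula on the curved Wiener space in its Malliavin (non-adapted) form. Transferring to the classical Wiener space $(W_0(\mathbb{R}^d),\mu)$ through the It\^{o} map $\mathcal{I}$ (which sends $\mu$ to $\nu$ and the anti-development to a flat Brownian motion $b$), the orthogonal lift corresponds to a Cameron--Martin vector field whose divergence is the Skorohod integral $\delta(k_X)$, the adjoint of the Malliavin derivative $D$. I would establish $\mathbb{E}_\nu[\tilde{X}F]=\mathbb{E}_\nu[F\,\delta(k_X)]$ for cylinder $F$ from the duality $\mathbb{E}_\mu[\langle DG,k\rangle]=\mathbb{E}_\mu[G\,\delta(k)]$, carefully tracking the curvature term $\tfrac12\,\widetilde{Ric}$ that appears when the Bismut/Driver formula is pushed through $\mathcal{I}$. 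Because $k_X$ is not adapted, the Skorohod integral splits as the It\^{o} integral $\int_0^1\langle k_X,db\rangle$ minus the trace of its Malliavin derivative, and it is exactly this trace term --- together with the requirement that $k_X$ lie in the domain of $\delta$ --- that must be controlled. This is the main obstacle; I expect the non-positive, bounded-sectional-curvature hypothesis of Theorem \ref{thm1} to supply the bounds on $\Theta$, $(\Theta\Theta^{T})^{-1}$ and their derivatives needed to make the trace term and all stochastic integrals square-integrable.

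Finally I would check that the class of admissible $f,g$ defining $\mathcal{D}(\tilde{X})$ is dense in $L^2(W_o(M),\nu)$ (it contains the smooth cylinder functions) and that every expectation above is finite on this class, so that the identity extends from cylinder functions, via the Leibniz step, to all $f,g\in\mathcal{D}(\tilde{X})$. The explicit description of $\delta(\tilde{X})=\delta(k_X)$, hence of $\tilde{X}^{\dagger}$, is then precisely the content of Lemma \ref{lem.5.1}, and the integration-by-parts identity follows.
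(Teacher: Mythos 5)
Your proposal has the right target formula but a genuine gap at its central step. The whole reduction rests on the Skorohod duality $\mathbb{E}_{\mu}\left[\left\langle DG,k\right\rangle\right]=\mathbb{E}_{\mu}\left[G\,\delta\left(k\right)\right]$ applied to the non-adapted kernel $k_X$, which presupposes that $k_X$ lies in $\operatorname{dom}\left(\delta\right)$, i.e.\ that the orthogonal lift is in the domain of the adjoint of the $H$-derivative on the \emph{curved} Wiener space. This is exactly the point the paper flags as unresolved: the remark following Definition \ref{pro112} states that for a non-adapted Cameron--Martin vector field it is not known whether it lies in the domain of $D^{*}$, and that closing $D$ on $W_o(M)$ suffers from dependence on the initial domain; the introduction adds that even on flat Wiener space the characterization of $\operatorname{dom}\left(\delta\right)$ is not satisfactory. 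Your only argument here is that you ``expect'' the curvature hypotheses to make the trace term square-integrable --- but square-integrability of the putative trace does not by itself place $k_X$ in $\operatorname{dom}\left(\delta\right)$, nor does it validate the decomposition $\delta\left(k_X\right)=\int_0^1\left\langle k_X,d\beta\right\rangle-\operatorname{Tr}\left(Dk_X\right)$ after transfer through the It\^{o} map, which (as the paper notes) fails to be a diffeomorphism and introduces the curvature rotation $A\left\langle\cdot\right\rangle$ into every derivative formula. So the step ``establish $\mathbb{E}_{\nu}\left[\tilde{X}F\right]=\mathbb{E}_{\nu}\left[F\,\delta\left(k_X\right)\right]$ from Malliavin duality'' is precisely the open difficulty, not a routine verification.

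The paper's proof is built to detour around this. It never invokes $\delta$: instead it expands $\tilde{X}f=\sum_{\alpha=1}^{d}\left\langle\tilde{C}\tilde{H},e_{\alpha}\right\rangle X^{Z_{\alpha}}f$ (Definition \ref{equ.4.10-1}), where each $X^{Z_{\alpha}}$ is an \emph{adapted} vector field with $Z_{\alpha}\in\mathcal{V}^{\infty}\cap\mathcal{B}^{\infty}$ (Lemma \ref{lem Z}), so Driver's approximate-flow integration by parts (Theorem \ref{the.2.9}) applies to each summand, with the non-adapted coefficient $g\cdot\left\langle\tilde{C}\tilde{H},e_{\alpha}\right\rangle$ absorbed into the test function. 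The real technical content is then Proposition \ref{prop Ric}, Theorem \ref{thm419} and Corollary \ref{Col C}, which prove --- under boundedness of $\nabla R$, an assumption your sketch omits --- that the anticipating coefficients $\tilde{C}\tilde{H}$ are differentiable along these adapted flows. The scalar you call $\delta\left(k_X\right)$ then appears as $\sum_{\alpha}\left\langle\tilde{C}\tilde{H},e_{\alpha}\right\rangle\int_0^1\left\langle\left(\tilde{T}_s^{-1}\right)^{*}e_{\alpha},d\beta_s\right\rangle-\sum_{\alpha}\left\langle X^{Z_{\alpha}}\left(\tilde{C}\tilde{H}\right),e_{\alpha}\right\rangle$ in Lemma \ref{lem.5.1}: formally your ``It\^{o} integral minus trace,'' but derived entirely from the adapted theory plus the product rule, with integrability supplied by the deterministic bounds on $\tilde{T}$, $\tilde{T}^{-1}$ and Burkholder--Davis--Gundy. (Your Leibniz reduction via $F=fg$ is only a cosmetic variant of the paper's direct bilinear computation and changes nothing essential.) In short, your final operator has the correct shape, but the route you propose stalls exactly where the paper's decomposition into adapted pieces was designed to avoid going.
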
  

\subsection{Structure of the Paper\label{sec.1.5}}
For the guidance to the reader, we give a brief summary of the contents of this paper. 

In Section \ref{cha.2} we set up some notations and preliminaries
in probability and geometry. In particular we present the stochastic parallel translation which leads to the stochastic extension of $\tilde{X}$ mentioned in Theorem \ref{thm1} to $W_o(M)$. 

In Section \ref{cha.3} we prove Theorem \ref{thm1} in a constructive way and derive its stochastic extension accordingly.

In Section \ref{cha.4} we first explore the possibility of fitting $\tilde{X}$ into existent theory by summarizing some classical results in differential calculus on $W_o(M)$. Some difficulties are mentioned in this direction. Then we set up a differential calculus for $\tilde{X}$ on $W_o(M)$ and derive an integration by parts formula for it. In the last of this section we explore the divergence term of the adjoint of $\tilde{X}$ under the condition that the curvature tensor is parallel.

\begin{acknowledgement}
I want to thank my advisor Bruce Driver for introducing to me Malliavin's lifting approach, especially its non-adapted nature, in contrast to Bismut's adapted lifting approach, both are powerful tools in Stochastic analysis.
\end{acknowledgement}

\section{Preliminaries in Geometry and Probability\label{cha.2}}
For the remainder of this paper, let $u_0:\mathbb{R}^d\to T_oM$ be a fixed linear isometry which we add to the standard setup $\left(M,g,o,u_0, \nabla\right)$. We use $u_0$ to identify $T_oM$ with $\mathbb{R}^d$. Suggested references for this section are Section 2 of \cite{Hsu01} and Sections 2, 3 of \cite{Driver1992}. Some other references are \cite{Andersson1999},  \cite{Elworthy82}, \cite{Cruzeiro1996} and \cite{Driver1995b} to name just a few.

\begin{definition}[Orthonormal Frame Bundle $\left(\mathcal{O}\left(M\right),\pi\right)$]\label{example2}

For any $x\in M$, denote by $\mathcal{O}\left(M\right)_{x}$ the
space of orthonormal frames on $T_{x}M$, i.e. the space
of linear isometries from $\mathbb{R}^{d}$ to $T_{x}M$. Denote $\mathcal{O}\left(M\right):=\cup_{x\in M}\mathcal{O}\left(M\right)_{x}$
and let $\pi:\mathcal{O}\left(M\right)\to M$ be the $\left(\text{fiber}\right)$ projection
map, i.e. for each $u\in\mathcal{O}\left(M\right)_{x}$, $\pi\left(u\right)=x$.
The pair $\left(\mathcal{O}\left(M\right),\pi\right)$ is the orthonormal
frame bundle over $M$.

\end{definition}

\begin{definition}[Connection on $\mathcal{O}\left(M\right)$]\label{con}
The connection on $\mathcal{O}\left(M\right)$ used in this paper is uniquely specified by the $\mathfrak{so}\left(d\right)$--valued connection form $\omega^\nabla$ on $\mathcal{O}\left(M\right)$ determined by $\nabla$; for any $u\in \mathcal{O}\left(M\right)$ and $X\in T_u\mathcal{O}\left(M\right)$, 
\[\omega_u^\nabla\left(X\right):=u^{-1}\frac{\nabla u\left(s\right)}{ds}\mid_{s=0}\]
where $u\left(\cdot\right)$ is a differentiable curve on $\mathcal{O}\left(M\right)$ such that $u\left(0\right)=u$ and $\frac{du\left(s\right)}{ds}\mid_{s=0}=X$. For any $\xi\in \mathbb{R}^d$, $\frac{\nabla u\left(s\right)}{ds}\mid_{s=0}\xi:=\frac{\nabla u\left(s\right)\xi}{ds}\mid_{s=0}$ is the covariant derivative of $u\left(\cdot\right)\xi$ along $\pi\left(u\left(\cdot\right)\right)$ at $\pi\left(u\right)$.   
\end{definition}
\begin{definition}[Horizontal Bundle $\mathcal{H}$]
	Given a connection form $\omega^\nabla$, the horizontal bundle $\mathcal{H}\subset T\mathcal{O}(M)$ is defined to be the kernel of $\omega^\nabla$.
\end{definition}
\begin{definition}\label{def.hv}
For any $a\in \mathbb{R}^d$, define the horizontal lift $B_a\in \Gamma\left(\mathcal{H}\right)$ in the following way: for any $u\in \mathcal{O}\left(M\right)$, $B_a(u)\in \mathcal{H}_u\subset T_u\mathcal{O}(M)$ is uniquely determined by
\[\omega_u^\nabla\left(B_a\left(u\right)\right)=0\text{  and  }\pi_*\left(B_a\left(u\right)\right)=ua.\]
\end{definition}
\begin{definition}[Horizontal Lift of a Path]\label{para}
For any $\sigma\in H\left(M\right)$, a curve
$u:\left[0,1\right]\to\mathcal{O}\left(M\right)$ is said to be a
horizontal lift of $\sigma$ if $\pi\circ u=\sigma$ and $u^\prime\left(s\right)\in \mathcal{H}_{u(s)}\text{ }\forall s\in [0,1]$.
\end{definition}
\begin{remark}
	In this paper we only consider horizontal lift with fixed start point $u_0\in \pi^{-1}\left(\sigma\left(0\right)\right)$. Under this assumption, given $\sigma\in H(M)$, its horizontal lift $u(\sigma,\cdot)$ is unique.
\end{remark}
We denote $u$ by $\psi\left(\sigma\right)$ and call $\psi$ the \textbf{horizontal lift map}.
\begin{definition}[Development Map]\label{def-d}Given
$w\in H\left(\mathbb{R}^{d}\right)$, the solution to the ordinary
differential equation 
\[
du\left(s\right)=\sum_{i=1}^{d}B_{e_i}\left(u\left(s\right)\right)dw^{i}\left(s\right),u\left(0\right)=u_{0}
\]
is defined to be the \textbf{development} of $w$ and we will denote this map $w\to u$ by $\eta$, i.e. $\eta\left(w\right)=u$. Here $\{e_i\}_{i=1}^d$ is the standard basis of $\mathbb{R}^d$.

\end{definition}
\begin{definition}[Rolling Map]

$\phi=\pi\circ\eta:H\left(\mathbb{R}^{d}\right)\to H\left(M\right)$
is said to be the rolling map to $H\left(M\right)$.
\end{definition}
\begin{definition}[Anti-rolling Map]

Given $\sigma\in H\left(M\right)$ with $u=\psi\left(\sigma\right).$
The anti-rolling of $\sigma$ is a curve $w\in H\left(\mathbb{R}^{d}\right)$
defined by:

\[
w_{t}=\int_{0}^{t}u_{s}^{-1}\sigma_{s}^{\prime}ds
\]
\end{definition}
\begin{remark}
It is not hard to see $w=\phi^{-1}\left(\sigma\right)$ and $u(\sigma,s)u_0^{-1}$ is the parallel translation along $\sigma\in H(M)$.	
\end{remark}
A stochastic version of the maps defined above is needed to specify the differential structure on $\left(W_o(M),\nu\right)$. It also provides tools that allow the transition between classical Wiener space and curved Wiener space. Since the
development maps on the smooth category are defined through ordinary
differential equations, a natural way to introduce probability is to replace
ODEs by (Stratonovich) stochastic differential equations.

First we set up some measure theoretic notations and conventions. Suppose
 $\left(\Omega,\left\{ \mathcal{G}_{s}\right\} ,\mathcal{G},P\right)$
is a filtered measurable space with a finite measure $P$. For any
$\mathcal{G}$---measurable function $f$, we use $P\left(f\right)$
and $\mathbb{E}_{P}\left[f\right]$ (if $P$ is a probability measure)
to denote the integral $\int_{\Omega}fdP$. Given two filtered measurable spaces
$\left(\Omega,\left\{ \mathcal{G}_{s}\right\} ,\mathcal{G},P\right)$
and $\left(\Omega^{\prime},\left\{ \mathcal{G}_{s}^{\prime}\right\} ,\mathcal{G}^{\prime},P^{\prime}\right)$
and a $\mathcal{G}/\mathcal{G}^{\prime}$ measurable map $f:\Omega\to\Omega^{\prime}$,
the law of $f$ under $P$ is the push-forward measure $f_{*}P\left(\cdot\right):=P\left(f^{-1}\left(\cdot\right)\right)$.
We will be mostly interested in the path spaces $W_{o}\left(M\right)$,
$W_{0}\left(\mathbb{R}^{d}\right)$ and $W_{u_{0}}\left(\mathcal{O}\left(M\right)\right)$.

\begin{definition}
	Given a Riemannian manifold $Y$, for any $s\in\left[0,1\right]$ let $\Sigma_{s}:W_{y}\left(Y\right)\to Y$
be the \textbf{coordinate functions} given by $\Sigma_{s}\left(\sigma\right)=\sigma\left(s\right)$.
\end{definition}
We will often view $\Sigma$ as a map from $W_{y}\left(Y\right)\text{ to }W_{y}\left(Y\right)$
in the following way: for any $\sigma\in W_{y}\left(Y\right)$ and
$s\in\left[0,1\right]$, $\Sigma\left(\sigma\right)\left(s\right)=\Sigma_{s}\left(\sigma\right)$.
Let $\mathcal{F}_{s}^{o}$ be the $\sigma-$algebra generated by $\left\{ \Sigma_{\tau}:\tau\leq s\right\} $.
We use $\mathcal{F}_{1}^{o}$ as the raw $\sigma-$algebra and $\left\{ \mathcal{F}_{s}^{o}\right\} _{0\leq s\leq1}$
as the filtration on $W_{y}\left(Y\right).$ The next theorem defines
the Wiener measure $\nu$ on
$\left(W_{y}\left(Y\right),\mathcal{F}_{1}^{o}\right).$

\begin{theorem}[Wiener measure]\label{thm Brown}
Assume $Y$ is a stochastically complete Riemannian manifold, then there exists a unique probability measure $\nu$ on $\left(W_{y}\left(Y\right),\mathcal{F}_{1}^{o}\right)$
which is uniquely determined by its finite dimensional distributions
as follows. For any partition $0=s_{0}<s_{1}<\cdots<s_{n-1}<s_{n}=1$
of $\left[0,1\right]$ and bounded functions $f:Y^{n}\to\mathbb{R};$

\begin{equation}
\nu\left(f\left(\Sigma_{s_{1}},\dots,\Sigma_{s_{n}}\right)\right)=\int_{Y^{n}}f\left(x_{1},\dots,x_{n}\right)\Pi_{i=1}^{n}p_{\Delta s_{i}}\left(x_{i-1},x_{i}\right)dx_{1}\cdots dx_{n}\label{eq:-36}
\end{equation}
where $p_{t}\left(\cdot,\cdot\right)$ is the heat kernel on $Y$ associated with $\frac{1}{2}\Delta_g$,
$\Delta_{i}=s_{i}-s_{i-1}$ for $1\leq i\leq n$.  
\end{theorem}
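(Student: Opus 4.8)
The plan is to construct $\nu$ as the unique extension of a consistent family of finite–dimensional distributions, and then to upgrade the resulting set–function on the product space to a measure carried by continuous paths. I would begin by fixing the base point as the zeroth coordinate, $x_0 := y$, and defining, for each partition $0 = s_0 < s_1 < \dots < s_n = 1$, the Borel measure $\nu_{s_1,\dots,s_n}$ on $Y^n$ with density $\prod_{i=1}^n p_{\Delta s_i}(x_{i-1}, x_i)$ with respect to the $n$–fold Riemannian volume $dx_1 \cdots dx_n$, so that the first factor is $p_{s_1}(y,x_1)$. Two analytic inputs about the heat kernel $p_t$ associated with $\tfrac12\Delta_g$ drive the whole argument. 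The first is the Chapman--Kolmogorov (semigroup) identity $\int_Y p_s(x,z)\,p_t(z,w)\,dz = p_{s+t}(x,w)$, valid on any complete manifold. The second is the mass identity $\int_Y p_t(x,\cdot)\,dx = 1$ for all $t>0$ and $x\in Y$; this is precisely the definition of stochastic completeness, and it is exactly the hypothesis guaranteeing that each $\nu_{s_1,\dots,s_n}$ is a probability measure rather than merely a sub–probability measure.

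Next I would verify that the family $\{\nu_{s_1,\dots,s_n}\}$ satisfies Kolmogorov's consistency conditions. Symmetry under permutation of the time indices is immediate from the product form of the density. For marginalization — that integrating out the coordinate $x_j$ attached to a deleted time returns the distribution for the reduced partition — there are two cases: deleting an interior time collapses the two factors touching $x_j$ via $\int_Y p_{s_j - s_{j-1}}(x_{j-1},x_j)\,p_{s_{j+1}-s_j}(x_j,x_{j+1})\,dx_j = p_{s_{j+1}-s_{j-1}}(x_{j-1},x_{j+1})$, while deleting the terminal time collapses the single trailing factor via the mass identity $\int_Y p_{\Delta s_n}(x_{n-1},x_n)\,dx_n = 1$. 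With consistency in hand, Kolmogorov's extension theorem produces a unique probability measure on the product space $Y^{[0,1]}$ with the $\sigma$–algebra generated by the coordinate maps $\Sigma_s$. Uniqueness on $\mathcal{F}_1^o$ then follows because the cylinder events $\{(\Sigma_{s_1},\dots,\Sigma_{s_n}) \in A\}$ form a $\pi$–system generating $\mathcal{F}_1^o$, so any two measures agreeing on them agree everywhere by the $\pi$--$\lambda$ theorem.

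The substantive point, and the step I expect to be the main obstacle, is regularity: the Kolmogorov extension a priori lives on arbitrary $Y$–valued functions, and I must show the measure is carried by $W_y(Y)$, i.e.\ by continuous paths starting at $y$. I would handle this with the Kolmogorov--Chentsov continuity criterion, which requires a short–time moment bound of the form $\int_Y \rho(x,z)^{2p}\,p_t(x,z)\,dz \le C_p\, t^{p}$ for some $p>1$ (say $p=2$, giving exponent $4$ and bound $C t^{2}$), where $\rho$ is the Riemannian distance; such Gaussian–type upper bounds come from the parabolic estimates for $p_t$ near the diagonal. This yields a Hölder–continuous modification, which starts at $y$ because $p_t(y,\cdot)\to\delta_y$ weakly as $t\downarrow 0$. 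An alternative that sidesteps the moment estimates is to invoke a stochastic development of the type set up in this section: solving the Stratonovich SDE $dU_s = \sum_i B_{e_i}(U_s)\circ dW^i_s$ on the frame bundle of $Y$, driven by a standard $\mathbb{R}^d$–Brownian motion, produces — under stochastic completeness, which forces non–explosion on $[0,1]$ — a continuous frame–valued process whose projection $\Sigma = \pi\circ U$ is continuous by construction; Itô's formula identifies its generator as $\tfrac12\Delta_g$ and hence its finite–dimensional distributions with the heat–kernel formula above, so its law is exactly the desired $\nu$. Either route closes the argument.
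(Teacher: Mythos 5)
Your overall architecture --- heat-kernel finite-dimensional distributions, consistency via Chapman--Kolmogorov together with conservativeness, Kolmogorov extension, uniqueness through a $\pi$--$\lambda$ argument on cylinder sets, then a continuity upgrade --- is the standard construction. Note that the paper itself gives no proof of this theorem: it is quoted as a classical preliminary, supported by the references listed at the beginning of Section \ref{cha.2} (notably Hsu's book, cited again in Theorem \ref{thm sHL}), and the proof in those references is precisely your second route: solve the Stratonovich SDE $dU_s=\sum_{i=1}^{d}B_{e_i}(U_s)\circ dW_s^i$ on the orthonormal frame bundle, use that stochastic completeness is equivalent to non-explosion of the resulting diffusion, project by $\pi$, and identify the transition density of the projected process with the minimal heat kernel of $\tfrac12\Delta_g$. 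Your identification of where stochastic completeness enters is exactly right: the mass identity (which you should write as $\int_Y p_t(x,z)\,dz=1$ --- your version integrates in the wrong variable, harmless only because $p_t$ is symmetric) is needed both to make each $\nu_{s_1,\dots,s_n}$ a probability measure and to delete the largest time in the marginalization step, while interior deletions use only the semigroup identity, which holds for the minimal heat kernel on any manifold.

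The genuine gap is in your primary continuity argument. Kolmogorov--Chentsov needs $\mathbb{E}\left[\rho(\Sigma_s,\Sigma_t)^{2p}\right]\le C\,|t-s|^{1+\varepsilon}$, and your proposed input $\sup_{x\in Y}\int_Y\rho(x,z)^{2p}\,p_t(x,z)\,dz\le C_p\,t^{p}$ is \emph{not} a consequence of stochastic completeness. Uniform Gaussian-type upper bounds near the diagonal, and the moment estimates they yield, require bounded-geometry hypotheses such as a uniform lower Ricci bound; stochastic completeness holds under far weaker conditions (e.g.\ Grigor'yan's volume-growth test, which tolerates $\operatorname{Ric}\to-\infty$ at controlled rates), and on such manifolds neither the supremum over $x$ of the moment integral nor its average against the marginal $p_s(y,x)\,dx$ is guaranteed finite, let alone $O(|t-s|^{1+\varepsilon})$. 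So your closing claim that ``either route closes the argument'' is too strong at the stated generality: only the development route is valid as written, and it is the one the cited literature actually uses. To salvage the moment route you would have to either impose curvature hypotheses absent from the theorem, or localize (moment bounds on compacta plus a non-explosion argument), and the latter essentially reproduces the development construction anyway.
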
 
\begin{definition}[Brownian motion] A stochastic process $X:\left(\Omega,\mathcal{G}_{s},\left\{ \mathcal{G}\right\} ,P\right)$$ \to\left(W_{y}\left(Y\right),\nu\right)$
is said to be a Brownian motion on $Y$ if the law of $X$
is $\nu$ i.e. $X_{*}P:=P\circ X^{-1}=\nu$. \end{definition}
\begin{remark} From Theorem \ref{thm Brown} it is clear that the law
of the adapted process $\Sigma:W_{y}\left(Y\right)\to W_{y}\left(Y\right)$
is $\nu$ and $\Sigma$ is a Brownian motion. We will call $\Sigma$ the \textbf{canonical Brownian motion} on $Y$. \end{remark}
\begin{remark} Using Theorem \ref{thm Brown}, we can construct Wiener
measure on $W_{0}\left(\mathbb{R}^{d}\right)$,
$W_{o}\left(M\right)$ and $W_{u_{0}}\left(\mathcal{O}\left(M\right)\right)$
respectively. In order to avoid ambiguity from moving between $W_{0}\left(\mathbb{R}^{d}\right)$
and $W_{o}\left(M\right)$, as is mentioned at the beginning of the introduction, we fix the symbol $\mu$
as the Wiener measure on $W_{0}\left(\mathbb{R}^{d}\right)$
and reserve the symbol $\nu$ as the Wiener measure on $W_{o}\left(M\right)$. Meanwhile we reserve $\Sigma$
as the canonical Brownian motion on $M$. \end{remark}
\begin{theorem}[Stochastic Horizontal Lift of Brownian Motion]\label{thm sHL}
If $\Sigma$ is the canonical Brownian motion on $M$, then there exists a unique $(\text{up to  }\nu-\text{equivalence})$ $\tilde{u}\in W_{u_0}\left(\mathcal{O}\left(M\right)\right)$ such that 
\begin{equation}
\pi\left(\tilde{u}_s\right)=\Sigma_s.
\end{equation}
\end{theorem}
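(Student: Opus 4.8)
The plan is to lift the smooth construction of Definitions \ref{def-d} and \ref{para} to the stochastic category by replacing the development ODE with a Stratonovich SDE on $\mathcal{O}(M)$ driven by the $M$-valued semimartingale $\Sigma$, and then to read off existence, the projection identity, and uniqueness from the pathwise theory of that SDE. For $u\in\mathcal{O}(M)$ and $v\in T_{\pi(u)}M$, Definition \ref{def.hv} singles out $B_{u^{-1}v}(u)$ as the unique horizontal vector at $u$ with $\pi_*\bigl(B_{u^{-1}v}(u)\bigr)=v$. Since the statement concerns the horizontal lift (as its title indicates), I take the defining properties of $\tilde{u}$ to be that it is horizontal, i.e. $\omega^\nabla(\circ\, d\tilde{u})=0$, and that it projects onto $\Sigma$; both are encoded in the single equation
\[
\circ\, d\tilde{u}_s = \sum_{i=1}^d B_{e_i}(\tilde{u}_s)\,\bigl(\tilde{u}_s^{-1}\circ d\Sigma_s\bigr)^i =: B_{\tilde{u}_s^{-1}\circ d\Sigma_s}(\tilde{u}_s),\qquad \tilde{u}_0 = u_0,
\]
whose right-hand side is horizontal by construction.

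First I would record that $\Sigma$ is a genuine $M$-valued semimartingale under $\nu$, so that the Stratonovich integral above is meaningful: the finite-dimensional description in Theorem \ref{thm Brown} identifies $\Sigma$ as the diffusion generated by $\tfrac12\Delta_g$, whence $f(\Sigma_s)-f(o)-\tfrac12\int_0^s(\Delta_g f)(\Sigma_r)\,dr$ is a martingale for every $f\in C^\infty(M)$ and $\Sigma$ is a semimartingale in the manifold sense. The coefficients $u\mapsto B_{e_i}(u)$ are smooth sections of $\mathcal{H}$, so, working in local charts of $\mathcal{O}(M)$ (or after a Whitney embedding $\mathcal{O}(M)\hookrightarrow\mathbb{R}^N$ and extending the vector fields smoothly), the standard existence-and-uniqueness theorem for Stratonovich SDEs with smooth coefficients driven by a continuous semimartingale yields a unique maximal solution $\tilde{u}$ up to an explosion time $\tau$, adapted to the filtration of $\Sigma$ and hence to $\{\mathcal{F}_s^o\}$.

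Next I would verify the projection identity and use it to rule out explosion. Applying $\pi_*$ to the SDE and using $\pi_* B_a(u)=ua$ from Definition \ref{def.hv} gives, by the Stratonovich chain rule,
\[
\circ\, d\bigl(\pi(\tilde{u}_s)\bigr) = \pi_*\bigl(B_{\tilde{u}_s^{-1}\circ d\Sigma_s}(\tilde{u}_s)\bigr) = \tilde{u}_s\,\bigl(\tilde{u}_s^{-1}\circ d\Sigma_s\bigr) = \circ\, d\Sigma_s,\qquad \pi(\tilde{u}_0)=o=\Sigma_0,
\]
so that $\pi(\tilde{u}_s)=\Sigma_s$ for $s<\tau$. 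The key point for non-explosion is that the fibres $\mathcal{O}(M)_x$ are compact, being copies of the orthogonal group $O(d)$: if $\Sigma_s$ stays in a compact $K\subset M$ on an interval, then $\tilde{u}_s$ stays in the compact set $\pi^{-1}(K)$, so the lift can escape every compact subset of $\mathcal{O}(M)$ only if its projection $\Sigma$ does. Since $M$ is stochastically complete, $\Sigma$ does not explode on $[0,1]$, and therefore $\tau\ge 1$ almost surely; thus $\tilde{u}\in W_{u_0}(\mathcal{O}(M))$ and $\pi(\tilde{u}_s)=\Sigma_s$ for all $s\in[0,1]$. As a by-product the driving noise is recovered as the stochastic anti-rolling $\beta_s:=\int_0^s \tilde{u}_r^{-1}\circ d\Sigma_r$, a Euclidean Brownian motion, which is the probabilistic counterpart of the anti-rolling map.

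Finally, for uniqueness suppose $\tilde{u}'\in W_{u_0}(\mathcal{O}(M))$ is horizontal with $\pi(\tilde{u}'_s)=\Sigma_s$. Horizontality forces $\circ\, d\tilde{u}'_s=B_{a_s}(\tilde{u}'_s)$ for a unique $\mathbb{R}^d$-valued Stratonovich differential $a_s$; projecting and using $\pi(\tilde{u}')=\Sigma$ together with $\pi_* B_{a_s}(\tilde{u}'_s)=\tilde{u}'_s a_s$ forces $a_s=(\tilde{u}'_s)^{-1}\circ d\Sigma_s$. Thus $\tilde{u}'$ solves the same SDE as $\tilde{u}$ from the same initial frame, and pathwise uniqueness gives $\tilde{u}'=\tilde{u}$ up to $\nu$-equivalence. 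I expect the main obstacle to be the analytic core of the existence step combined with non-explosion: one must solve a Stratonovich SDE driven by a manifold-valued semimartingale on the generally non-compact total space $\mathcal{O}(M)$, and the argument that the lift inherits non-explosion from $\Sigma$ rests on compactness of the structure group $O(d)$, the one geometric input with no analogue in the flat setting.
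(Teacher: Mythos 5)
Your argument is correct and coincides with the proof the paper actually invokes: the paper's own ``proof'' is merely the citation to Theorem 2.3.5 of \cite{Hsu01}, which constructs the horizontal lift by exactly your route --- encoding horizontality plus the projection condition as a Stratonovich SDE on $\mathcal{O}\left(M\right)$ driven by $\Sigma$ (made rigorous in local charts, where the fibre component solves a linear equation driven by the coordinates of $\Sigma$, or via an embedding), deriving $\pi\left(\tilde{u}_s\right)=\Sigma_s$, ruling out explosion through compactness of the $O(d)$-fibres combined with non-explosion of $\Sigma$, and obtaining uniqueness from pathwise uniqueness of the SDE. Your preliminary observation is also the correct reading of the statement: horizontality must be taken as part of the defining data, since the bare condition $\pi\left(\tilde{u}_s\right)=\Sigma_s$ alone cannot determine $\tilde{u}$ uniquely (any adapted $O(d)$-valued rotation of the frame component projects to the same $\Sigma$).
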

\begin{proof}
See Theorem 2.3.5 in \cite{Hsu01}
\end{proof}
\begin{definition}[Stochastic Anti--rolling Map]\label{def sar}
If $\Sigma$ is the canonical Brownian motion on $M$, then the stochastic anti--rolling $\beta$ of $\Sigma$ is defined by,
\begin{equation}
d\beta_s=\tilde{u}^{-1}_s\delta\Sigma_s\text{ , }\beta_0=0.
\end{equation}
\end{definition} 
$\tilde{u}$ and $\beta$ defined above are linked through the (stochastic) development map.
\begin{definition}[Stochastic Development Map]\label{def sd}
Let $\tilde{u}$ and $\beta$ be as defined in Theorem \ref{thm sHL} and Definition $\ref{def sar}$, then $\tilde{u}$ satisfies the following SDE driven by $\beta$,
\[
d\tilde{u}_{s}=\sum_{i=1}^{d}B_{e_i}\left(\tilde{u}_{s}\right)\delta \beta_{s}\text{ , }\tilde{u}\left(0\right)=u_{0},
\]
and $\tilde{u}$ is said to be the development of $\beta$. 
\end{definition} 
\begin{fact}\label{fact 1} 
The following facts are well known, the proofs may be found in the references listed at the beginning of this section, for example, Theorem 3.3 in \cite{Driver1992}. 
\begin{itemize}
\item $\phi$ is a diffeomorphism from $H\left(\mathbb{R}^{d}\right)$ to
$H\left(M\right),$ 
\item $\beta$ is a Brownian motion on $\left(W_o\left(\mathbb{R}^d\right), \mu\right)$.
\end{itemize}
\end{fact}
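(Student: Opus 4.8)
The plan is to treat the two bullet points separately. For the first, that the rolling map $\phi=\pi\circ\eta$ is a diffeomorphism from $H(\mathbb{R}^d)$ onto $H(M)$, I would first establish that $\phi$ is a bijection by exhibiting the anti-rolling map as its inverse, and then promote this to a diffeomorphism by invoking smooth dependence of solutions of ordinary differential equations on their controls. For the second, that the stochastic anti-rolling $\beta$ is a Euclidean Brownian motion, I would use L\'evy's martingale characterization, the two ingredients being that $\tilde u_s^{-1}$ is a linear isometry for every $s$ and that $\Sigma$ is a $\tfrac12\Delta_g$-diffusion.

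For bijectivity of $\phi$: given $w\in H(\mathbb{R}^d)$, its development $u=\eta(w)$ is horizontal, so $\sigma:=\pi(u)$ satisfies $\sigma'=\pi_\ast u'=\sum_i(ue_i)\dot w^i=u\dot w$ pointwise; since each $u(s)$ is a linear isometry this gives $|\sigma'|_g=|\dot w|$ and hence $\int_0^1|\sigma'|_g^2\,ds=\int_0^1|\dot w|^2\,ds<\infty$, so $\phi$ indeed maps into $H(M)$. Conversely, for $\sigma\in H(M)$ the horizontal lift $u=\psi(\sigma)$ is the unique horizontal path over $\sigma$ starting at $u_0$ (Definition \ref{para}), and setting $w_t=\int_0^t u_s^{-1}\sigma_s'\,ds$ produces $w\in H(\mathbb{R}^d)$ with $\phi(w)=\sigma$; uniqueness of the horizontal lift shows anti-rolling is a two-sided inverse. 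To see both maps are smooth, I would work in the Hilbert-manifold charts of $H(M)$ and regard $\eta$ (respectively $\psi$) as the solution operator of the controlled ODE $du=\sum_i B_{e_i}(u)\,dw^i$; the solution of such an equation depends smoothly on the $H^1$-control, with Fr\'echet derivative given by the associated linearized (Jacobi-type) equation, and the projection $\pi$ is smooth. The same reasoning applied to the anti-rolling integral gives smoothness of $\phi^{-1}$, so $\phi$ is a diffeomorphism.

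For $\beta$: it is continuous with $\beta_0=0$, so by L\'evy it suffices to show each $\beta^i$ is a continuous local martingale and $\langle\beta^i,\beta^j\rangle_s=\delta_{ij}s$. Rewriting $\beta^i=\int_0^\cdot\langle \tilde u_s e_i,\delta\Sigma_s\rangle_g$ and converting the Stratonovich integral to It\^o changes the integrand only by a finite-variation term, so the quadratic variation is unaffected; since $\Sigma$ is Brownian motion on $M$ its infinitesimal covariation is the cometric, whence $\langle\tilde u_s e_i,(d\Sigma\otimes d\Sigma)\tilde u_s e_j\rangle=\langle\tilde u_s e_i,\tilde u_s e_j\rangle_g\,ds=\delta_{ij}\,ds$ because $\tilde u_s$ is an isometry. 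For the martingale property I would compute the It\^o drift of $\beta$: the Stratonovich-to-It\^o correction produced by $\delta(\tilde u^{-1})$ is exactly balanced by the It\^o drift $\tfrac12\Delta_g$ carried by $\Sigma$, the cancellation being a direct consequence of the metric compatibility of $\nabla$ encoded in $\omega^\nabla$ and of the defining relation $d\tilde u_s=\sum_i B_{e_i}(\tilde u_s)\,\delta\beta_s^i$ of Definition \ref{def sd}. With zero drift and covariation $\delta_{ij}s$, L\'evy's theorem yields that $\beta$ has law $\mu$, i.e. is a Brownian motion on $W_0(\mathbb{R}^d)$.

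I expect the genuinely delicate step to be the smoothness half of the first claim rather than either bijectivity or the Brownian characterization. Checking that $\phi$ and $\phi^{-1}$ are smooth between the infinite-dimensional Hilbert manifolds $H(\mathbb{R}^d)$ and $H(M)$ requires making the chart structure of $H(M)$ explicit and verifying that the ODE solution operator and the anti-rolling integral are smooth in those charts, including control of their higher Fr\'echet derivatives via the linearized equations. For the second claim the only subtlety is the bookkeeping in the Stratonovich-to-It\^o conversion that makes the drift vanish; once that cancellation is in place, L\'evy's characterization closes the argument immediately.
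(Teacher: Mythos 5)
Your proposal is correct, and it is essentially a reconstruction of the standard arguments that the paper itself does not reproduce: the paper's entire ``proof'' of this Fact is the citation to Theorem 3.3 of \cite{Driver1992} (for the diffeomorphism statement) and to the references at the head of Section 2 such as \cite{Hsu01} (for the anti-development of Brownian motion). Your first half --- bijectivity via the anti-rolling map as a two-sided inverse, then smoothness via smooth dependence of the solution operator of $du=\sum_i B_{e_i}(u)\,dw^i$ on its $H^1$-control, with Fr\'echet derivative given by the linearized equation --- is exactly the route of Driver's Theorem 3.3, and your identification of the smoothness half as the delicate step is accurate. Your second half, L\'evy's characterization with the covariation computed from the isometry property of $\tilde u_s$, is the proof in Section 2.3 of \cite{Hsu01}.

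Two refinements are worth recording. First, in showing $\phi$ maps \emph{into} $H(M)$ you use the pointwise identity $\sigma'=u\dot w$ and the isometry of $u(s)$, which is fine, but the prior issue is that the development $\eta(w)$ be defined on all of $[0,1]$ at all; this is where the standing completeness hypothesis on $M$ does actual work (the horizontal lift of a given $\sigma\in H(M)$, by contrast, is a linear ODE in local trivializations and needs no completeness). Second, your attribution of the drift cancellation to ``metric compatibility'' is slightly misplaced. In the It\^o computation the vanishing of the drift of $\beta^j=\int\theta^j(\delta\tilde u)$ comes from the constancy of the canonical one-form on horizontal fields, $\theta^j(B_{e_i})=\delta_{ij}$, so that $B_{e_i}\bigl(\theta^j(B_{e_i})\bigr)=0$, combined with $\Sigma$ being a $\tfrac12\Delta_g$-diffusion whose lift $\tilde u$ is generated by the horizontal Laplacian $\tfrac12\sum_i B_{e_i}^2$; the identification of that horizontal Laplacian with $\tfrac12\Delta_g$ without extra first-order terms uses torsion-freeness of the Levi-Civita connection. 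Metric compatibility enters elsewhere: it is what keeps $\tilde u_s$ in $\mathcal{O}(M)$, hence a linear isometry, which is precisely the ingredient giving $\langle\beta^i,\beta^j\rangle_s=\delta_{ij}s$. With these bookkeeping points corrected, your outline closes as stated.
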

From now on some notations are fixed for the convenience of consistency.
\begin{notation} \label{not2} For any $\sigma\in H\left(M\right)$, $u_{\left(\cdot\right)}\left(\sigma\right)\in H_{u_0}\left(\mathcal{O}\left(M\right)\right)$ is its horizontal lift and $b_{\left(\cdot\right)}\left(\sigma\right)\in H\left(\mathbb{R}^d\right)$ is its anti-rolling. Recall that $\left\{ \Sigma_s\right\}_{0\leq s\leq 1} $ is fixed to be the canonical Brownian motion on $\left(W_o\left(M\right),\nu \right)$. We also fix $\beta\left(\cdot\right)$ to be the stochastic anti-rolling of $\Sigma$, $($which is a Brownian motion on $\mathbb{R}^d)$ and $\tilde{u}\left(\cdot\right)$ to be the stochastic horizontal lift of $\Sigma$. \end{notation}
\begin{notation}[Geometric Notation]\label{Geo}\text{ } 
\begin{itemize}
\item \textbf{curvature tensor} For any $X,Y,Z\in\Gamma\left(TM\right),$ define
the $(\text{Riemann})$ curvature tensor $R:\Gamma\left(TM\right)\times\Gamma\left(TM\right)\to \Gamma\left(End\left(TM\right)\right)$
to be: 
\[
R\left(X,Y\right)Z =\nabla_{X}\nabla_{Y}Z-\nabla_{Y}\nabla_{X}Z-\nabla_{\left[X,Y\right]}Z.
\]
\item For any $\sigma\in H\left(M\right)$, define $R_{u\left(\sigma,s\right)}\left(\cdot,\cdot\right)\cdot$ to be a map from $\mathbb{R}^{d}\otimes\mathbb{R}^{d}$ to $End\left(\mathbb{R}^{d}\right)$ given by;
\begin{equation}
R_{u\left(\sigma,s\right)}\left(a,b\right)\cdot={u\left(\sigma,s\right)}^{-1}R\left(u\left(\sigma,s\right)a,u\left(\sigma,s\right)b\right)u\left(\sigma,s\right)\text{  }\forall a,b\in \mathbb{R}^{d}.\label{n1}
\end{equation} 
where $R$ is the curvature tensor of $M$.
Similarly we define $R_{\tilde{u}\left(\sigma,s\right)}\left(\cdot,\cdot\right)\cdot$ to be a random map $($up to $\nu$-equivalence$)$ from $\mathbb{R}^{d}\otimes\mathbb{R}^{d}$ to $\mathbb{R}^{d}$ as follows:
\begin{equation}
R_{\tilde{u}\left(\sigma,s\right)}\left(\cdot,\cdot\right)\cdot={\tilde{u}\left(\sigma,s\right)}^{-1}R\left(\tilde{u}\left(\sigma,s\right)\cdot,\tilde{u}\left(\sigma,s\right)\cdot\right)\tilde{u}\left(\sigma,s\right).\label{n2}
\end{equation}
\item $Ric\left(\cdot\right):=\sum_{i=1}^{d}R\left(v_{i},\cdot\right)v_{i}$
is the Ricci curvature tensor on $M.$ Here $\left\{ v_{i}\right\} _{i=1}^{d}$
is an orthonormal basis of proper tangent space. Using $u\left(\sigma,s\right)$ or $\tilde{u}\left(\sigma,s\right)$ to pull back $R$ as in $(\ref{n1})$ and $(\ref{n2})$, we can define $Ric_{u\left(\sigma,s\right)}$ and $Ric_{\tilde{u}\left(\sigma,s\right)}$ to be maps $(\text{random maps})$ from $\mathbb{R}^d$ to $\mathbb{R}^d$.
\end{itemize}
\end{notation}
\begin{convention}
	Since most of our results require a curvature bound, it would be convenient to fix a symbol $N$ for it, i.e. $\left\Vert R\right\Vert\leq N$ when it is viewed as a tensor of order 4. Following this manner, we have $\left\Vert Ric \right\Vert\leq (d-1)N$. A generic constant will be denoted by $C$, it can vary from line to line. Sometimes $C_{(\cdot)}$ or $C(\cdot)$ are used to specify its dependence on some parameters.
\end{convention}
\begin{definition}\label{def.rcf} $f:W_o\left(M\right)\mapsto\mathbb{R}$ is a \textbf{cylinder function} if there exists a partition 
\[
\mathcal{P}:=\left\{ 0<s_{1}<\cdots<s_{n}\leq1\right\} 
\]
of $\left[0,1\right]$ and a function $F:C^m\left(M^{n},\mathbb{R}\right)$
such that
\[
f=F\left(\Sigma_{s_{1}},\Sigma_{s_{2}},\dots,\Sigma_{s_{n}}\right).
\]
We denote this space by $\mathcal{FC}^m$. \end{definition}
\begin{notation}\label{not fspace}
Denote 
\[\mathcal{FC}^1_{b}:=\left\{f:=F\left(\Sigma\right)\in \mathcal{FC}^{1}, F\text{ and all its partial differentials }grad_iF\text{ are bounded}\right\}.\]
\end{notation}
\begin{notation}\label{pvf}
Given a measurable function $h:H(M)\to H\left(\mathbb{R}^d\right)$, denote
\[X^h\left(\sigma,s\right):=u\left(\sigma,s\right)h\left(\sigma,s\right).\]
With this notation, we can express, for any $\sigma\in H(M)$, 
\[T_\sigma H(M)=\left\{X^h\mid h:H(M)\to H\left(\mathbb{R}^d\right)\text{ is measurable.} \right\}\]
\end{notation}
\section{The Orthogonal Lift $\tilde{X}$ of $X$ on $H\left(M\right)$ and Its Stochastic
Extension\label{cha.3}}

\subsection{Damped Metrics and Adjoints\label{sec.4.1}}
\begin{notation}
	For any $r,s\in \mathbb{N}$, the $(r,s)$-tensor bundle on $M$ is denoted by $T^{r,s}M$.
\end{notation}
Given $\Lambda\in \Gamma(T^{1,1}M)$, we can define a damped metric on $H(M)$ by replacing $Ric$ with $\Lambda$ in Definition \ref{dam}. Furthermore, for any $\sigma\in H(M)$, using parallel translation $u(\sigma,\cdot)$, one can obtain an isometry between $(T_\sigma H(M),\left<\right>_{\Lambda})$ and $(H(\mathbb{R}^d),\left<\right>_{\alpha})$, where $\alpha(\cdot)=u(\cdot)^{-1}\circ \Lambda\circ u(\cdot)\in C([0,1], \operatorname*{End}\left(\mathbb{R}^{d}\right))$. So in order to prove Theorem \ref{thm1}, there is no more difficulty in considering the following more general metric on $H(\mathbb{R}^d)$.
\begin{definition}[$\alpha$--inner product]\label{def.4.1.1}Let
$\alpha\left(t\right)\in\operatorname*{End}\left(\mathbb{R}^{d}\right)$ be a continuously varying matrix valued function. For $h,k\in H\left(\mathbb{R}^{d}\right)$
let 
\[
\left\langle h,k\right\rangle _{\alpha}:=\int_{0}^{1}\left(\frac{d}{dt}h\left(t\right)+\alpha\left(t\right)h\left(t\right)\right)\cdot\left(\frac{d}{dt}k\left(t\right)+\alpha\left(t\right)k\left(t\right)\right)dt.
\]

\end{definition}

\begin{remark} We denote the norm induced by $\alpha$--inner product
by $\left\Vert \cdot\right\Vert _{\alpha},$ differentiating from the
notation $\left\Vert \cdot\right\Vert _{H\left(\mathbb{R}^{d}\right)}$
for the norm induced by the $H^{1}$-- inner product: $\left\langle h,k\right\rangle _{H^{1}}=\int_{0}^{1}h^{\prime}\left(s\right)\cdot k^{\prime}\left(s\right)ds.$
\end{remark}
For the moment, let $\Ep:H\left(\mathbb{R}^d\right)\to \mathbb{R}^d$ be the end point evaluation map in the case where $M=\mathbb{R}^d$. Let ${\Ep}^*:\mathbb{R}^d\to H\left(\mathbb{R}^d\right)$ be the adjoint of $\Ep$ with respect to the $\alpha$--inner product, i.e. for any $a\in \mathbb{R}^d$ and $h\in H\left(\mathbb{R}^d\right)$, 
\[\left<\Ep h,a\right>_{\mathbb{R}^d}=\left< h,\left({\Ep}^*\right) a\right>_{\alpha}.\]The next theorem computes $\Ep^*$ which is crucial in constructing the orthogonal lift in Subsection \ref{sec4.2}.
\begin{theorem} \label{th.4.1.3}Let $a\in\mathbb{R}^{d}$ and $\alpha\left(t\right)$ be as in Definition \ref{def.4.1.1}, then
$\Ep^{\ast}a\in H\left(\mathbb{R}^{d}\right)$ is given by 
\begin{equation}
\left(\Ep^{\ast}a\right)\left(t\right)=\left(S\left(t\right)\int_{0}^{t}\left[S\left(s\right)^{\ast}S\left(s\right)\right]^{-1}S\left(1\right)^{\ast}ds\right)a.\label{equ.3.1}
\end{equation}
where $S\left(t\right)\in\operatorname*{Aut}\left(\mathbb{R}^{d}\right)$
solves 
\[
\frac{d}{dt}S\left(t\right)+\alpha\left(t\right)S\left(t\right)=0\text{ with }S\left(0\right)=I,
\]
$S(t)^*$ is the conjugate transpose of $S(t)$.

\end{theorem}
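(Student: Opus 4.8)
The plan is to recognize the $\alpha$-inner product as the pullback of the standard $L^2$ inner product under a first-order differential operator, to invert that operator explicitly by means of $S$, and then to read off the adjoint simply by transposing matrices across the Euclidean pairing. First I would introduce the operator $D\colon H(\mathbb{R}^d)\to L^2([0,1],\mathbb{R}^d)$ given by $(Dh)(t)=h'(t)+\alpha(t)h(t)$, so that by Definition \ref{def.4.1.1} one has $\langle h,k\rangle_\alpha=\langle Dh,Dk\rangle_{L^2}$. Since $\alpha$ is continuous and every $h\in H(\mathbb{R}^d)$ is continuous with $h'\in L^2$, the map $D$ indeed lands in $L^2$.

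The heart of the argument is that $D$ is a bijection with an explicit inverse supplied by $S$. Using $S'=-\alpha S$ one computes $\frac{d}{dt}(S^{-1})=S^{-1}\alpha$, whence $\frac{d}{dt}(S^{-1}h)=S^{-1}(h'+\alpha h)=S^{-1}Dh$; integrating from $0$ and using $h(0)=0$ yields
\[
h(t)=S(t)\int_0^t S(s)^{-1}(Dh)(s)\,ds.
\]
Conversely this same formula solves $Dh=g$ for an arbitrary $g\in L^2$, so $D\colon H(\mathbb{R}^d)\to L^2$ is a bijection; here I invoke that $S(t)\in\operatorname{Aut}(\mathbb{R}^d)$ for every $t$, which is the standard fact that a fundamental solution matrix of a linear system is everywhere invertible.

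Next I would write the defining relation of the adjoint in terms of $D$,
\[
h(1)\cdot a=\langle \Ep h,a\rangle_{\mathbb{R}^d}=\langle h,\Ep^* a\rangle_\alpha=\langle Dh,D\Ep^* a\rangle_{L^2},
\]
set $g=Dh$, and use the inversion formula to express $h(1)=S(1)\int_0^1 S(s)^{-1}g(s)\,ds$. Moving the matrices across the Euclidean pairing by transposition gives
\[
h(1)\cdot a=\int_0^1 g(s)\cdot\bigl((S(s)^{-1})^*S(1)^* a\bigr)\,ds.
\]
Because $g=Dh$ ranges over all of $L^2$ as $h$ ranges over $H(\mathbb{R}^d)$, comparing this with $\langle Dh,D\Ep^* a\rangle_{L^2}=\int_0^1 g(s)\cdot(D\Ep^* a)(s)\,ds$ upgrades the pairing identity to the pointwise $L^2$ identity $(D\Ep^* a)(s)=(S(s)^*)^{-1}S(1)^* a$. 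Finally I would apply $D^{-1}$, using $S(s)^{-1}(S(s)^*)^{-1}=(S(s)^*S(s))^{-1}$, to recover
\[
(\Ep^* a)(t)=S(t)\int_0^t S(s)^{-1}(D\Ep^* a)(s)\,ds=\Bigl(S(t)\int_0^t[S(s)^*S(s)]^{-1}S(1)^*\,ds\Bigr)a,
\]
which is exactly (\ref{equ.3.1}).

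The step I expect to be the main, if modest, obstacle is justifying that $D$ is a genuine bijection onto $L^2$, so that the pairing identity may legitimately be upgraded to a pointwise $L^2$ identity, together with keeping the conjugate-transpose bookkeeping straight. The only genuine analytic input is the invertibility of $S(t)$ for all $t$, which is guaranteed by linear ODE theory and is already built into the hypothesis $S(t)\in\operatorname{Aut}(\mathbb{R}^d)$.
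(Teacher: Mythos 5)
Your proposal is correct and takes essentially the same approach as the paper: your operator $D$ with inverse $h(t)=S(t)\int_0^t S(s)^{-1}(Dh)(s)\,ds$ is precisely the paper's substitution $h(t)=S(t)w(t)$, and both arguments flatten the $\alpha$-inner product via the fundamental solution $S$, use the arbitrariness of the test element to extract the pointwise identity $S(s)^{\ast}S(s)\dot v(s)=S(1)^{\ast}a$, and then integrate. Your explicit verification that $D$ is a bijection onto $L^{2}$ merely makes rigorous the step the paper phrases as ``as $w\in H\left(\mathbb{R}^{d}\right)$ is arbitrary,'' so the two proofs are the same in substance.
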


\begin{proof} Notice that if $h\left(t\right)=S\left(t\right)w\left(t\right)$
with $w\left(\cdot\right)\in H\left(\mathbb{R}^{d}\right),$ then
\begin{align*}
\left(\frac{d}{dt}+\alpha\left(t\right)\right)h\left(t\right) =\left(\frac{d}{dt}+\alpha\left(t\right)\right)\left[S\left(t\right)w\left(t\right)\right] =\left[\left(\frac{d}{dt}+\alpha\left(t\right)\right)S\left(t\right)\right]w\left(t\right)+S\left(t\right)\dot{w}\left(t\right)=S\left(t\right)\dot{w}\left(t\right).
\end{align*}
And in particular, 
\[
\left\langle Sv,Sw\right\rangle _{\alpha}=\int_{0}^{1}S\left(t\right)\dot{v}\left(t\right)\cdot S\left(t\right)\dot{w}\left(t\right)dt.
\]
Using Lemma \ref{Lem1} we know $S\left(t\right)\in\operatorname*{Aut}\left(\mathbb{R}^{d}\right)$. Given $a\in\mathbb{R}^{d},$ let $w\left(t\right)=\Ep^{\ast}a$ and
define $v\left(t\right):=S\left(t\right)^{-1}w\left(t\right)$ so
that $\Ep^{\ast}a=S\left(t\right)v\left(t\right)$. Then by the definition
of the adjoint we find, 
\begin{align*}
\int_{0}^{1}S\left(t\right)\dot{v}\left(t\right)\cdot S\left(t\right)\dot{w}\left(t\right)dt & =\left\langle Sv,Sw\right\rangle _{\alpha}=\left\langle \Ep^{\ast}a,Sw\right\rangle _{\alpha}=a\cdot\Ep\left(Sw\right)\\
 & =a\cdot S\left(1\right)w\left(1\right)=\int_{0}^{1}S\left(1\right)^{\ast}a\cdot\dot{w}\left(t\right)dt
\end{align*}
As $w\in H\left(\mathbb{R}^{d}\right)$ is arbitrary we may conclude
that 
\[
S\left(t\right)^{\ast}S\left(t\right)\dot{v}\left(t\right)=S\left(1\right)^{\ast}a\implies v\left(t\right)=\int_{0}^{t}\left[S\left(s\right)^{\ast}S\left(s\right)\right]^{-1}S\left(1\right)^{\ast}ads
\]
which proves (\ref{equ.3.1}). \end{proof}

\begin{theorem} \label{the.3.4}If $a\in\mathbb{R}^{d},$ then $h\left(\cdot\right)\in H\left(\mathbb{R}^{d}\right)$
defined by 
\begin{equation}
h\left(t\right):=S\left(t\right)\left(\int_{0}^{t}\left[S\left(s\right)^{\ast}S\left(s\right)\right]^{-1}ds\right)\left(\int_{0}^{1}\left[S\left(s\right)^{\ast}S\left(s\right)\right]^{-1}ds\right)^{-1}S\left(1\right)^{-1}a,\label{equ.3.2}
\end{equation}
is the minimal length element of $H\left(\mathbb{R}^{d}\right)$ such
that $\Ep h=a$, i.e. 
\[
\left\Vert h\right\Vert _{\alpha}=\inf\left\{ \left\Vert k\right\Vert _{\alpha}\mid k\left(\cdot\right)\in H\left(\mathbb{R}^{d}\right),\text{ }\Ep k=a\right\} .
\]

\end{theorem}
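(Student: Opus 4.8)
The plan is to recognize this as the standard Hilbert-space least-norm problem and to reduce it to the computation of $\Ep^{\ast}$ already carried out in Theorem \ref{th.4.1.3}. Viewing $\left(H(\mathbb{R}^d),\langle\cdot,\cdot\rangle_\alpha\right)$ as a Hilbert space and $\Ep:H(\mathbb{R}^d)\to\mathbb{R}^d$ as a bounded surjective linear map (boundedness being implicit in the existence of $\Ep^{\ast}$ established above), the affine solution set $\{k:\Ep k=a\}$ is a coset of $\ker\Ep$. The standard projection principle says the $\|\cdot\|_\alpha$-minimizer is the unique element of this coset lying in $(\ker\Ep)^{\perp}$, and since $\operatorname{Ran}(\Ep^{\ast})\subset(\ker\Ep)^{\perp}$ with $\operatorname{Ran}(\Ep^{\ast})$ finite-dimensional, this motivates searching for $h$ of the form $h=\Ep^{\ast}b$ with $b\in\mathbb{R}^d$ chosen so that $\Ep h=a$.

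First I would compute the finite-dimensional operator $\Ep\Ep^{\ast}\in\operatorname{End}(\mathbb{R}^d)$ by evaluating the formula (\ref{equ.3.1}) at $t=1$; since $\Ep(\Ep^{\ast}a)=(\Ep^{\ast}a)(1)$ and the constant factor $S(1)^{\ast}$ may be pulled out of the $s$-integral, this yields
\[
\Ep\Ep^{\ast} = S(1)\left(\int_0^1 [S(s)^{\ast} S(s)]^{-1}\,ds\right)S(1)^{\ast}.
\]
Next I would argue that $\Ep\Ep^{\ast}$ is invertible: $S(1)\in\operatorname{Aut}(\mathbb{R}^d)$ by Lemma \ref{Lem1}, while for each $s$ the matrix $[S(s)^{\ast} S(s)]^{-1}$ is symmetric positive definite and depends continuously on $s$, so its integral over $[0,1]$ is positive definite and hence invertible.

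With invertibility in hand, I would set $b=(\Ep\Ep^{\ast})^{-1}a$ and $h=\Ep^{\ast}b$. Substituting $(\Ep\Ep^{\ast})^{-1}=(S(1)^{\ast})^{-1}\left(\int_0^1 [S(s)^{\ast} S(s)]^{-1}\,ds\right)^{-1}S(1)^{-1}$ into (\ref{equ.3.1}), the factors $S(1)^{\ast}(S(1)^{\ast})^{-1}$ cancel and the expression collapses to exactly (\ref{equ.3.2}); in particular one checks directly that $h(1)=S(1)S(1)^{-1}a=a$, so $\Ep h=a$. Finally I would verify minimality by the Pythagorean argument: for any $k$ with $\Ep k=a$, write $k=h+(k-h)$, where $\Ep(k-h)=0$, i.e. $k-h\in\ker\Ep$; the defining property of the adjoint gives
\[
\langle h,k-h\rangle_\alpha=\langle \Ep^{\ast}b,k-h\rangle_\alpha=b\cdot\Ep(k-h)=0,
\]
so $\|k\|_\alpha^2=\|h\|_\alpha^2+\|k-h\|_\alpha^2\ge\|h\|_\alpha^2$, with equality precisely when $k=h$. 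This establishes both the minimality and the uniqueness of $h$.

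I expect the only genuine obstacle to be the invertibility/positivity step for $\Ep\Ep^{\ast}$, equivalently the positive-definiteness of $\int_0^1 [S(s)^{\ast} S(s)]^{-1}\,ds$; everything else is bookkeeping of the matrix factors together with the standard orthogonal-projection characterization of least-norm solutions. Note that this approach also makes transparent why the minimizer is automatically $\Ep^{\ast}$-shaped, so no separate variational computation is needed.
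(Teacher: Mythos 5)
Your proposal is correct and follows essentially the same route as the paper's proof: both reduce to the orthogonal decomposition $H\left(\mathbb{R}^{d}\right)=\operatorname{Nul}\left(\Ep\right)^{\perp}\oplus\operatorname{Nul}\left(\Ep\right)$, seek the minimizer in the form $h=\Ep^{\ast}v$, invert $\Ep\Ep^{\ast}=S\left(1\right)\left(\int_{0}^{1}\left[S\left(s\right)^{\ast}S\left(s\right)\right]^{-1}ds\right)S\left(1\right)^{\ast}$ via Lemma \ref{Lem1}, and substitute back into Eq.~(\ref{equ.3.1}). Your explicit Pythagorean verification of minimality and uniqueness is a slightly more careful rendering of the projection step the paper states in one line, but it is the same argument.
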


\begin{proof} Since $H\left(\mathbb{R}^{d}\right)=\operatorname*{Nul}\left({\Ep}\right)^{\perp}\oplus \operatorname*{Nul}\left({\Ep}\right)$, we have $\Ep h=a\implies \Ep h_k=a$ and $\left\Vert h \right\Vert_\alpha\geq \left\Vert h_k \right\Vert_\alpha$ where $h_k$ is the orthogonal projection of $h$ onto $\operatorname*{Nul}\left({\Ep}\right)^{\perp}$. So we are looking for the element,
$h\in H\left(\mathbb{R}^{d}\right),$ such that $\Ep h=a$ and $h\in\operatorname*{Nul}\left({\Ep}\right)^{\perp}=\operatorname*{Ran}\left({\Ep}^{\ast}\right).$
In other words we should have $h=\Ep^{\ast}v$ for some $v\in\mathbb{R}^{d}.$
Thus, using Eq.(\ref{equ.3.1}), we need to demand that 
\[
a={\Ep}{\Ep}^{\ast}v=\left({\Ep}^{\ast}v\right)\left(1\right)=\left(S\left(1\right)\int_{0}^{1}\left[S\left(s\right)^{\ast}S\left(s\right)\right]^{-1}S\left(1\right)^{\ast}ds\right)v,
\]
i.e. 
\[
v=\left(S\left(1\right)\int_{0}^{1}\left[S\left(s\right)^{\ast}S\left(s\right)\right]^{-1}S\left(1\right)^{\ast}ds\right)^{-1}a.
\]
Here we have used Lemma \ref{Lem1} to show $S(1)$ and $\int_{0}^{1}\left[S\left(s\right)^{\ast}S\left(s\right)\right]^{-1}ds$ are invertible.

It then follows that 
\begin{align*}
& h\left(t\right)={\Ep}^{\ast}\left(S\left(1\right)\int_{0}^{1}\left[S\left(s\right)^{\ast}S\left(s\right)\right]^{-1}S\left(1\right)^{\ast}ds\right)^{-1}a\\
 & =\left(S\left(t\right)\int_{0}^{t}\left[S\left(s\right)^{\ast}S\left(s\right)\right]^{-1}S\left(1\right)^{\ast}ds\right)\left(S\left(1\right)\int_{0}^{1}\left[S\left(s\right)^{\ast}S\left(s\right)\right]^{-1}S\left(1\right)^{\ast}ds\right)^{-1}a
\end{align*}
which is equivalent to Eq.(\ref{equ.3.2}).
\end{proof}

\begin{remark} \label{rem.3.5}The expression in (\ref{equ.3.2})
matches the well known result for damped metric where $\alpha=\frac{1}{2}\operatorname*{Ric}_{u}$.
Further observe that if $\alpha\left(t\right)=0$ (i.e. we are in
the flat case) then $S\left(t\right)=I$ and the above expression
reduces to $h\left(t\right)=ta$ as we know to be the correct result.
\end{remark}
\subsection{The Orthogonal Lift $\tilde{X}$ on $H\left(M\right)$\label{sec4.2}}

In this subsection we construct the orthogonal lift $\tilde{X}\in \Gamma\left(TH\left(M\right)\right)$ of $X\in\Gamma\left(TM\right)$ which is defined to be the minimal length element in $\Gamma\left(TH\left(M\right)\right)$ relative to the damped metric introduced in Definition \ref{dam}.

\begin{definition} \label{def.4.2}For each $\sigma\in H\left(M\right)$, recall that $u_s\left(\sigma\right)$ is the horizontal lift of $\sigma$. Denote by $T_{\left(\cdot\right)}:H\left(M\right)\to End\left(\mathbb{R}^d\right)$ the
solution to the following initial value problem: 
\begin{equation}
\begin{cases}
\frac{d}{ds}T_s+\frac{1}{2}Ric_{u_s}T_s=0\\
T_0=I.
\end{cases}\label{equ.4.3}
\end{equation}

\end{definition}

\begin{lemma} \label{lem.4.3}For all $s\in\left[0,1\right]$, $T_s$
is invertible. Further both $\underset{0\leq s\leq1}{\sup}\left\Vert T_s\right\Vert$ and
$\underset{0\leq s\leq1}{\sup}\left\Vert T_s^{-1}\right\Vert $
are bounded by $e^{\frac{1}{2}\left(d-1\right)N}$, where $\left(d-1\right)N$
is a bound of $\left\Vert\operatorname*{Ric}\right\Vert.$
\end{lemma}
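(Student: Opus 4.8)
The plan is to regard $T_{(\cdot)}$ as the fundamental matrix solution of the linear matrix ODE \eqref{equ.4.3} and to extract everything from two Gronwall-type estimates together with the symmetry of the Ricci tensor. Throughout I would use that $u_s(\sigma)$ is a linear isometry, so $Ric_{u_s}=u_s^{-1}\circ Ric\circ u_s$ has the same operator norm as $Ric$ itself; hence $\left\Vert Ric_{u_s}\right\Vert\leq (d-1)N$ by the standing convention, and $Ric_{u_s}$ is self-adjoint because $Ric$ is.

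First I would establish invertibility. Introduce the companion ODE $\frac{d}{ds}V_s=\tfrac12 V_s\,Ric_{u_s}$ with $V_0=I$, which has a unique solution on $[0,1]$ since the coefficient is continuous and bounded. Differentiating the product gives $\frac{d}{ds}(V_sT_s)=\tfrac12 V_s Ric_{u_s}T_s-\tfrac12 V_s Ric_{u_s}T_s=0$, so $V_sT_s\equiv V_0T_0=I$. Thus $T_s$ is invertible with $T_s^{-1}=V_s$. (Equivalently, Liouville's formula gives $\det T_s=\exp(-\tfrac12\int_0^s\operatorname{tr}Ric_{u_\tau}\,d\tau)\neq 0$, but it is the companion equation, not the determinant, that I need for the norm bound on the inverse.)

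Next I would bound $\left\Vert T_s\right\Vert$. Fix $v\in\mathbb{R}^d$ and set $y_s:=T_sv$, so that $\dot y_s=-\tfrac12 Ric_{u_s}y_s$. Then $\frac{d}{ds}\left\Vert y_s\right\Vert^2=-\langle Ric_{u_s}y_s,y_s\rangle\leq \left\Vert Ric_{u_s}\right\Vert\left\Vert y_s\right\Vert^2\leq (d-1)N\left\Vert y_s\right\Vert^2$. Gronwall's inequality yields $\left\Vert y_s\right\Vert^2\leq \left\Vert v\right\Vert^2 e^{(d-1)Ns}$, and taking the supremum over $s\in[0,1]$ and over unit vectors $v$ gives $\sup_{0\le s\le1}\left\Vert T_s\right\Vert\leq e^{\frac12(d-1)N}$, as claimed.

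Finally, for $\left\Vert T_s^{-1}\right\Vert=\left\Vert V_s\right\Vert$ I would pass to the conjugate transpose. Since $Ric_{u_s}$ is self-adjoint, $V_s^{*}$ solves $\frac{d}{ds}V_s^{*}=\tfrac12 Ric_{u_s}V_s^{*}$ with $V_0^{*}=I$, a left-multiplication equation of exactly the form treated above (the sign of the coefficient being irrelevant to the estimate). Applying the same argument to each column $z_s:=V_s^{*}v$ gives $\frac{d}{ds}\left\Vert z_s\right\Vert^2=\langle Ric_{u_s}z_s,z_s\rangle\leq (d-1)N\left\Vert z_s\right\Vert^2$, hence $\left\Vert V_s^{*}\right\Vert\leq e^{\frac12(d-1)N}$; since $\left\Vert V_s\right\Vert=\left\Vert V_s^{*}\right\Vert$, the bound on $\left\Vert T_s^{-1}\right\Vert$ follows. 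The only point needing care—hence the main obstacle, though a mild one—is precisely this bound on the inverse: $T_s^{-1}$ satisfies an ODE in which the coefficient acts on the opposite side, and it is the symmetry of the Ricci tensor that allows me to transpose back into the same Gronwall template rather than having to analyze a genuinely different equation.
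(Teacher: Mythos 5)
Your proof is correct and follows essentially the same route as the paper: your companion ODE $\frac{d}{ds}V_s=\frac{1}{2}V_s Ric_{u_s}$, $V_0=I$, is exactly the paper's Eq. (\ref{equ.4.4}) produced by Lemma \ref{Lem1}, and both arguments conclude via Gronwall's inequality together with the bound $\left\Vert Ric\right\Vert\leq\left(d-1\right)N$. The one divergence is cosmetic: the appeal to self-adjointness of $Ric_{u_s}$ to transpose the inverse's equation is unnecessary, since Gronwall applied to the operator norm of the integral form $V_s=I+\frac{1}{2}\int_{0}^{s}V_\tau Ric_{u_\tau}\,d\tau$ handles the right-multiplication directly (using $\left\Vert V_\tau Ric_{u_\tau}\right\Vert\leq\left\Vert V_\tau\right\Vert\left\Vert Ric_{u_\tau}\right\Vert$), which is the paper's implicit argument---so the step you flag as the main obstacle is not actually one.
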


\begin{proof}
	Apply Lemma \ref{Lem1} with $\alpha(s)=-\frac{1}{2}Ric_{u_s}$, one get $T_s$ is invertible $\forall s\in [0,1]$ and $T^{-1}_s$ satisfies the following ODE,
\begin{equation}
\begin{cases}
\frac{d}{ds}U_s=\frac{1}{2}U_s\operatorname*{Ric}\nolimits _{u_s}\\
U_0=I.
\end{cases}\label{equ.4.4}
\end{equation}
The stated bounds now follow
by Gronwall's inequality and the boundedness of curvature tensor. \end{proof}

\begin{definition} \label{def.4.4}Let $\mathbf{K}:\left[0,1\right]\times H\left(M\right)\to End\left(\mathbb{R}^d\right)$ be defined by
\begin{equation}
\mathbf{K}_s:=T_s\left[\int_{0}^{s}T_r^{-1}\left(T_r^{-1}\right)^{\ast}dr\right]T^{\ast}_1.\label{equ.4.6}
\end{equation}
\end{definition}
\begin{lemma} \label{lem.4.6}$\mathbf{K}_1$ is invertible
and $\left\Vert \mathbf{K}_1^{-1}\right\Vert \leq e^{\left(d-1\right)N}$, provided $\left\Vert\operatorname*{Ric}\right\Vert\leq \left(d-1\right)N$.
\end{lemma}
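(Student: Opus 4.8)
The plan is to first rewrite $\mathbf{K}_1 = T_1 Q T_1^{\ast}$ with $Q := \int_0^1 T_r^{-1}(T_r^{-1})^{\ast}\,dr$, and to establish invertibility by showing $\mathbf{K}_1$ is symmetric and positive definite. Symmetry is immediate: each integrand $T_r^{-1}(T_r^{-1})^{\ast}$ is self-adjoint, so $Q$ is, and hence $\mathbf{K}_1^{\ast} = (T_1^{\ast})^{\ast} Q^{\ast} T_1^{\ast} = T_1 Q T_1^{\ast} = \mathbf{K}_1$. For positive definiteness I would compute, for $v \in \mathbb{R}^d$,
\[
\langle \mathbf{K}_1 v, v\rangle = \int_0^1 \big\|(T_r^{-1})^{\ast} T_1^{\ast} v\big\|^2\,dr,
\]
which is strictly positive whenever $v \neq 0$, since $T_r$ and $T_1$ are invertible by Lemma \ref{lem.4.3}. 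This already gives that $\mathbf{K}_1$ is invertible, and because it is symmetric positive definite we have $\|\mathbf{K}_1^{-1}\| = \lambda_{\min}(\mathbf{K}_1)^{-1}$. It then remains to bound $\lambda_{\min}(\mathbf{K}_1) = \inf_{\|v\|=1}\langle \mathbf{K}_1 v, v\rangle$ from below by $e^{-(d-1)N}$.

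The crucial step is to rewrite $(T_r^{-1})^{\ast} T_1^{\ast} v = (T_1 T_r^{-1})^{\ast} v$ and to estimate $T_1 T_r^{-1}$ through the transition operator $W_r := T_r T_1^{-1} = (T_1 T_r^{-1})^{-1}$ rather than through a product of the individual norm bounds for $T_1$ and $T_r^{-1}$. Differentiating (\ref{equ.4.3}) shows $\frac{d}{dr}W_r = -\frac12 Ric_{u_r} W_r$ with terminal value $W_1 = I$; integrating backward and applying Gronwall's inequality on $[r,1]$ yields
\[
\|W_r\| \leq e^{\frac12 \int_r^1 \|Ric_{u_\rho}\|\,d\rho} \leq e^{\frac12(1-r)(d-1)N}.
\]
Consequently $\big\|(T_1 T_r^{-1})^{\ast} v\big\| = \big\|(W_r^{\ast})^{-1} v\big\| \geq \|v\|/\|W_r\| \geq \|v\|\, e^{-\frac12 (1-r)(d-1)N}$.

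Feeding this pointwise estimate back into the integral, I would obtain
\[
\langle \mathbf{K}_1 v, v\rangle \geq \|v\|^2 \int_0^1 e^{-(1-r)(d-1)N}\,dr \geq \|v\|^2\, e^{-(d-1)N},
\]
where the last inequality uses $(1-r)(d-1)N \leq (d-1)N$ for $r \in [0,1]$. Hence $\lambda_{\min}(\mathbf{K}_1) \geq e^{-(d-1)N}$ and $\|\mathbf{K}_1^{-1}\| \leq e^{(d-1)N}$, as claimed.

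I expect the main obstacle to lie exactly in the exponent bookkeeping of the second and third paragraphs. The naive route $\|\mathbf{K}_1^{-1}\| \leq \|T_1^{-1}\|^2\,\|Q^{-1}\|$, combined with the $e^{\frac12(d-1)N}$ bounds from Lemma \ref{lem.4.3}, produces only $e^{2(d-1)N}$, which is too weak for the stated constant. Obtaining $e^{(d-1)N}$ forces one to keep the outer factor $T_1$ \emph{inside} the integral and to observe that the transition operator $T_r T_1^{-1}$ accumulates curvature only over $[r,1]$, whose shrinking length $1-r$ reduces the exponent enough that the $r$-integral recovers a single power of $e^{-(d-1)N}$.
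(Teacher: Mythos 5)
Your proof is correct, and at the decisive step it takes a genuinely different (and sharper) route than the paper's. The paper begins identically, rewriting $\mathbf{K}_1=\int_{0}^{1}\left(T_1T_r^{-1}\right)\left(T_1T_r^{-1}\right)^{\ast}dr$ and using $\left\langle \mathbf{K}_1v,v\right\rangle =\int_{0}^{1}\left\Vert \left(T_1T_r^{-1}\right)^{\ast}v\right\Vert ^{2}dr$, but it then estimates the integrand by applying Lemma \ref{lem.4.3} separately to $T_1$ and $T_r^{-1}$: its displayed chain ends with $\left\langle \mathbf{K}_1v,v\right\rangle \geq e^{-2\left(d-1\right)N}\left\Vert v\right\Vert ^{2}$, after which it nevertheless asserts $eig\left(\mathbf{K}_1\right)\subset\lbrack e^{-\left(d-1\right)N},\infty)$ with no further argument. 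As written, the paper's computation therefore only supports the weaker constant $e^{2\left(d-1\right)N}$ --- exactly the loss you anticipated in your closing paragraph --- so the stated constant is left unjustified there. Your replacement step supplies the missing justification: viewing $W_r=T_rT_1^{-1}$ as the inverse propagator, differentiating Eq. (\ref{equ.4.3}) to get $W_r^{\prime}=-\frac{1}{2}Ric_{u_r}W_r$ with $W_1=I$, and applying Gronwall backward on $\left[r,1\right]$ integrates the curvature only once over an interval of length $1-r$, giving $\left\Vert W_r\right\Vert \leq e^{\frac{1}{2}\left(1-r\right)\left(d-1\right)N}$ and hence $\left\Vert \left(T_1T_r^{-1}\right)^{\ast}v\right\Vert \geq e^{-\frac{1}{2}\left(1-r\right)\left(d-1\right)N}\left\Vert v\right\Vert$, instead of the doubled exponent produced by multiplying the two one-sided bounds $\left\Vert T_r\right\Vert,\left\Vert T_1^{-1}\right\Vert \leq e^{\frac{1}{2}\left(d-1\right)N}$. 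The auxiliary steps all check out: the standard inverse bound $\left\Vert \left(W_r^{\ast}\right)^{-1}v\right\Vert \geq\left\Vert v\right\Vert /\left\Vert W_r\right\Vert$ is valid, $Ric_{u_r}$ has the same norm bound $\left(d-1\right)N$ as $Ric$ since $u_r$ is an isometry, and $e^{-\left(1-r\right)\left(d-1\right)N}\geq e^{-\left(d-1\right)N}$ on $\left[0,1\right]$, so $\lambda_{\min}\left(\mathbf{K}_1\right)\geq e^{-\left(d-1\right)N}$ and $\left\Vert \mathbf{K}_1^{-1}\right\Vert \leq e^{\left(d-1\right)N}$ as claimed. (Evaluating your integral exactly would even give the slightly stronger bound $\lambda_{\min}\left(\mathbf{K}_1\right)\geq\left(1-e^{-\left(d-1\right)N}\right)/\left(\left(d-1\right)N\right)$, but the crude pointwise bound already delivers the lemma's constant.) In short, your argument both differs from the paper's and repairs the gap between the paper's displayed estimate and its stated conclusion.
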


\begin{proof} 
Since
\[
\mathbf{K}_1:=\int_{0}^{1}\left(T_1T_r^{-1}\right)\left(T_1T_r^{-1}\right)^{\ast}dr
\]
is a symmetric positive semi-definite operator such that
\[
\left\langle \mathbf{K}_1v,v\right\rangle =\int_{0}^{1}\left\Vert \left(T_1T_r^{-1}\right)^{\ast}v\right\Vert ^{2}dr\text{  }\forall v\in\mathbb{C}^{d}.
\]
Apply Lemma \ref{lem.4.3} to the expression given;
\begin{align*}
\left\langle \mathbf{K}_1v,v\right\rangle  & \geq\int_{0}^{1}e^{-\left(d-1\right)N}\left\Vert \left(T_r^{-1}\right)^{\ast}v\right\Vert ^{2}dr\geq\int_{0}^{1}e^{-2\left(d-1\right)N}\left\Vert v\right\Vert ^{2}dr=e^{-2\left(d-1\right)N}\left\Vert v\right\Vert ^{2}
\end{align*}
from which it follows that $eig\left(\mathbf{K}_1\right)\subset\lbrack e^{-\left(d-1\right)N},\infty)$ and $\left\Vert\mathbf{K}_1^{-1}\right\Vert=\frac{1}{\min\left\{\lambda:\lambda\in eig(\mathbf{K}_1)\right\}}\leq e^{(d-1)N}$.
\end{proof}

\begin{definition} \label{def.4.7}Let $X\in\Gamma\left(TM\right)$, define two maps $H:H\left(M\right)\to \mathbb{R}^d$ and $J:\left[0,1\right]\times H\left(M\right)\to \mathbb{R}^d$ as follows,
\begin{equation}
H(\sigma)=u_1^{-1}\left(\sigma\right)X\circ {\Ep}\left(\sigma\right)\label{equ.4.8}
\end{equation}
and 
\begin{equation}
J\left(\sigma,s\right):=J_s\left(\sigma\right):=\mathbf{K}_s\left(\sigma\right)\mathbf{K}_1^{-1}\left(\sigma\right)H\left(\sigma\right).\label{equ.4.9}
\end{equation}

\end{definition}

\begin{theorem} \label{the.4.8}Given $X\in\Gamma\left(TM\right)$, the minimal length lift $\tilde{X}$ relative to the damped metric
in Definition \ref{dam} of $X$ to $\Gamma\left(TH\left(M\right)\right)$, is
given by $\tilde{X}=X^{J}$. Further we know that $J_s$ is the solution
to the following ODE: 
\[
J^{\prime}_s=-\frac{1}{2}Ric_{u_s}J_s+\phi_s,\text{ }J_0=0
\]
where $\phi_s=\left(T_1T_s^{-1}\right){}^{\ast}\mathbf{K}_1^{-1}H=\left(T_s^{-1}\right){}^{\ast}\left[\int_{0}^{1}T_r^{-1}\left(T_r^{-1}\right)^{\ast}dr\right]^{-1}T_1^{-1}H.$
\end{theorem}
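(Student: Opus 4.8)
The plan is to reduce the fibrewise minimization over $T_\sigma H(M)$ to the flat problem already solved in Theorem \ref{the.3.4}. Fix $\sigma\in H(M)$ with horizontal lift $u_s(\sigma)$. As explained at the start of Subsection \ref{sec.4.1}, parallel translation gives an isometry $X^h\leftrightarrow h$ between $(T_\sigma H(M),\langle\cdot,\cdot\rangle_{Ric})$ and $(H(\mathbb{R}^d),\langle\cdot,\cdot\rangle_\alpha)$ with $\alpha(s)=\frac{1}{2}Ric_{u_s}$. Under this identification the constraint ${\Ep}_\ast Y=X$ becomes a pointwise endpoint condition: writing $Y=X^h$ (Notation \ref{pvf}) one has ${\Ep}_\ast X^h=X^h(\sigma,1)=u_1 h(1)$, so ${\Ep}_\ast Y=X$ is, at the point $\sigma$, equivalent to $u_1 h(1)=X(\sigma(1))$, i.e. $h(1)=u_1^{-1}X(\sigma(1))=H(\sigma)$. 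Thus the minimizer $\tilde X(\sigma)$ of Theorem \ref{thm1} corresponds exactly to the minimal-length $h\in H(\mathbb{R}^d)$ with ${\Ep}h=H(\sigma)$ in the flat notation of Subsection \ref{sec.4.1}.

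Next I would invoke Theorem \ref{the.3.4} with $a=H(\sigma)$. The matrix $S(t)$ of Theorem \ref{th.4.1.3}, solving $\dot S+\alpha S=0$, $S(0)=I$, is for $\alpha=\frac{1}{2}Ric_{u_s}$ precisely $T_s$ of Definition \ref{def.4.2}. Hence the minimizer is
\[
J_s = T_s\left(\int_0^s [T_r^\ast T_r]^{-1}\,dr\right)\left(\int_0^1 [T_r^\ast T_r]^{-1}\,dr\right)^{-1} T_1^{-1} H,
\]
and it remains to identify this with $\mathbf{K}_s\mathbf{K}_1^{-1}H$ of Definition \ref{def.4.7}. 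This is a short algebraic check: using $[T_r^\ast T_r]^{-1}=T_r^{-1}(T_r^{-1})^\ast$ and the factorization $\mathbf{K}_1^{-1}=(T_1^\ast)^{-1}\left[\int_0^1 T_r^{-1}(T_r^{-1})^\ast dr\right]^{-1}T_1^{-1}$ (Lemma \ref{lem.4.6} guarantees $\mathbf{K}_1$ is invertible), the factor $T_1^\ast(T_1^\ast)^{-1}$ in $\mathbf{K}_s\mathbf{K}_1^{-1}$ cancels and one recovers exactly the displayed $J_s$. Uniqueness of the minimizer is inherited from the Hilbert-space orthogonal projection underlying Theorem \ref{the.3.4}, since the feasible set $\{h:{\Ep}h=H(\sigma)\}$ is a closed affine subspace.

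To obtain the stated ODE I would differentiate $J_s=\mathbf{K}_s c$ with $c:=\mathbf{K}_1^{-1}H$ constant in $s$. From Definition \ref{def.4.4}, the fundamental theorem of calculus, and $T_s'=-\frac{1}{2}Ric_{u_s}T_s$ (Definition \ref{def.4.2}),
\[
\frac{d}{ds}\mathbf{K}_s = -\frac{1}{2} Ric_{u_s}\,\mathbf{K}_s + (T_s^{-1})^\ast T_1^\ast,
\]
so $J_s'=-\frac{1}{2}Ric_{u_s}J_s+(T_s^{-1})^\ast T_1^\ast c$, and $J_0=\mathbf{K}_0 c=0$ since $\mathbf{K}_0=0$. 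Setting $\phi_s=(T_s^{-1})^\ast T_1^\ast\mathbf{K}_1^{-1}H=(T_1T_s^{-1})^\ast\mathbf{K}_1^{-1}H$ gives the first form; substituting the factorization of $\mathbf{K}_1^{-1}$ and again cancelling $T_1^\ast(T_1^\ast)^{-1}$ yields the second form $\phi_s=(T_s^{-1})^\ast\left[\int_0^1 T_r^{-1}(T_r^{-1})^\ast dr\right]^{-1}T_1^{-1}H$.

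The computations are all elementary; the points that need care are auxiliary rather than central. First, one must confirm that the fibrewise construction yields a genuine section $\tilde X=X^J\in\Gamma(TH(M))$, i.e. that $\sigma\mapsto J(\sigma,\cdot)$ inherits enough regularity, which follows since $\sigma\mapsto u(\sigma,\cdot)$, and hence $\sigma\mapsto T_s(\sigma)$ and $\sigma\mapsto H(\sigma)$, vary appropriately. Second, the invertibility of $T_s$ and of $\mathbf{K}_1$ used throughout is exactly Lemma \ref{lem.4.3} and Lemma \ref{lem.4.6}, which require only the curvature bound $\left\Vert Ric\right\Vert\le (d-1)N$; in particular the full non-positive sectional-curvature hypothesis of Theorem \ref{thm1} is not needed for this explicit formula, only continuity of $\alpha=\frac{1}{2}Ric_{u_s}$. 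I expect no genuine obstacle: the substance of the theorem is already contained in Theorem \ref{the.3.4}, and what remains is the bookkeeping identification with the $\mathbf{K}$-notation together with the differentiation yielding the transport ODE.
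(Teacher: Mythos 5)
Your proposal is correct and is essentially the paper's own proof: the paper's entire argument is ``Apply Theorem \ref{the.3.4} with $\alpha_s=\frac{1}{2}Ric_{u_s}$,'' and you do exactly this, merely making explicit the routine steps the paper leaves implicit (the parallel-translation isometry from Subsection \ref{sec.4.1}, the identification of ${\Ep}_*X^h=X$ with $h(1)=H(\sigma)$, the cancellation showing the minimizer equals $\mathbf{K}_s\mathbf{K}_1^{-1}H$, and the differentiation yielding the ODE, all of which check out). Your side remark that only the bound $\left\Vert Ric\right\Vert\le(d-1)N$ and continuity of $\alpha$ are needed here, not the non-positive curvature hypothesis, is also consistent with how the paper uses Lemmas \ref{lem.4.3} and \ref{lem.4.6}.
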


\begin{proof} Apply Theorem \ref{the.3.4} with $\alpha_s=\frac{1}{2}Ric_{u_s}.$
\end{proof}

The following construction gives rise to a stochastic extension of $\tilde{X}$ to a Cameron-Martin vector field on $W_o(M)$. The definition of Cameron-Martin vector field is given right below. Its properties are further explored in the next section.

Recall from Notation \ref{not2} that $\tilde{u}$ is the stochastic horizontal lift of the canonical Brownian motion $\Sigma$ on $M$. Mimicking the tangent bundle $TH(M)$ of $H(M)$ as expressed in Notation \ref{pvf}, we define a Cameron-Martin vector field $($not necessarily adapted$)$ as follows.
\begin{definition}\label{cmv}
	A \textbf{Cameron-Martin process}, $h$, is an $\mathbb{R}^d$---valued process on $W_o(M)$ such that $s\to h(s)$ is in $H(\mathbb{R}^d)\text{ }\nu-a.s.$ A $TM$-valued process $Y$ on $(W_o(M),\nu)$ is called a \textbf{Cameron-Martin vector field} (denote this space by $\mathcal{X}$) if $\pi(Y_s)=\Sigma_s\text{ }\nu-a.s.$ and there exists a Cameron-Martin process $h(\cdot)$ such that $Y(s)=\tilde{u}_sh_s \forall s\in [0,1]\text{ }\nu-a.s.$ with\[ 
	\left<Y,Y\right>_{\mathcal{X}}:=\mathbb{E}\left[ \left\Vert h\right\Vert^2_{H(\mathbb{R}^d)}\right]<\infty.\]
	We will write $X^h=Y$ to highlight this representation and $X^h$ is called adapted if $h$ is adapted.
\end{definition}

\begin{definition} \label{def.T}Define $\tilde{T}_{\left(\cdot\right)}:\left[0,1\right]\times W_o\left(M\right)\to End\left(\mathbb{R}^d\right)$ to be
 the solution to the following initial value problem: 
\begin{equation}
\begin{cases}
\frac{d}{ds}\tilde{T}_s+\frac{1}{2}Ric_{\tilde{u}_s}\tilde{T}_s=0\\
\tilde{T}_0=I
\end{cases}\label{equ.4.3-1}
\end{equation}
\end{definition}
\begin{definition}\label{def.4.4-1}  Using $\tilde{T}_s$, we define $\mathbf{\tilde{K}}:\left[0,1\right]\times W_o\left(M\right)\to End\left(\mathbb{R}^d\right)$:
	\begin{equation}
	\mathbf{\tilde{K}}_s:=\tilde{T}_s\left[\int_{0}^{s}\tilde{T}_r^{-1}\left(\tilde{T}_r^{-1}\right)^*dr\right]\tilde{T}^{\ast}_1.\label{equ.4.6-1}
	\end{equation}

\end{definition}
\begin{remark} Following the same arguments used in Lemma \ref{lem.4.3}
and \ref{lem.4.6}, one can see the bounds obtained there still hold
for $\tilde{T}$ and $\tilde{\mathbf{K}}\text{ }\nu-a.s$. \end{remark}

\begin{definition} \label{def.4.7-1}For each $X\in\Gamma\left(TM\right)$
define two maps $\tilde{H}:W_o\left(M\right)\to \mathbb{R}^d$ and $\tilde{J}:W_o\left(M\right)\to H\left(\mathbb{R}^d\right)$ by
\begin{equation}
\tilde{H}=\tilde{u}_1^{-1}X\circ \Ep\label{equ.4.8-1}
\end{equation}
and 
\begin{equation}
\tilde{J}_s:=\mathbf{\tilde{K}}_s\mathbf{\tilde{K}}_1^{-1}\tilde{H}\text{ for }s\in \left[0,1\right].\label{equ.4.9-1}
\end{equation}
\end{definition}
\begin{notation} \label{not.4.9}Given a measurable function $h:W_o\left(M\right)\to H\left(\mathbb{R}^d\right)$, let $Z_h:W_o\left(M\right)\to H\left(\mathbb{R}^d\right)$ be the solution to the following initial value problem:
\[
\begin{cases}
{Z_h}^{\prime}\left(s\right)=-\frac{1}{2}Ric_{\tilde{u}_s}Z_h\left(s\right)+h^{\prime}_s\\
Z_h\left(0\right)=0.
\end{cases}
\]

\end{notation}

\begin{definition}[Orthogonal Lift on $W_o(M)$] \label{def Oc}For any $X\in\Gamma\left(TM\right),$
define $\tilde{X}\in \mathcal{X}$ as follows,
\[
\tilde{X}_s=X^{Z_{\Phi}}_s:=\tilde{u}_sZ_{\Phi}\left(s\right)\text{ for }0\leq s \leq 1
\]
where 
\[
\Phi_s=\int_{0}^{s}\left(\tilde{T}_{\tau}^{-1}\right){}^{\ast}\left[\int_{0}^{1}\left(\tilde{T}^{\ast}_r\tilde{T}_r\right){}^{-1}dr\right]^{-1}\tilde{T}_1^{-1}\tilde{H}d\tau.
\]
\end{definition}
In the next section we will specify how this Cameron-Martin vector field act on geometric Wiener functionals.

\section{A Differential Calculus on $W_o(M)$ for $\tilde{X}$}\label{cha.4}
\subsection{Review of Calculus on Wiener Space}

First we review some classical results for adapted Cameron-Martin vector field where $(\text{approximate})$ flows can be constructed.

\begin{definition} [Vector Valued Brownian Semimartingale]
\label{def.2.1} Let $V$ be a finite dimensional vector space. A function
$f:W_o\left(M\right)\times\left[0,1\right]\to V$ is called a Brownian
semimartingale if $f$ has the following representation: 
\[
f\left(s\right)=\int_{0}^{s}Q_{\tau}d\beta_{\tau}+\int_{0}^{s}r_{\tau}d\tau
\]
where $\left(Q_{s},r_{s}\right)$ is a predictable process with values
in $Hom\left(\mathbb{R}^d,V\right)\times V$. We will call $\left(Q_{s},r_{s}\right)$ the
kernels of $f$.\end{definition}

\begin{definition}[$R^q$ and $\mathcal{H}^{q}$ Space] \label{def.2.2}For
each $q\in[1,\infty],$ $f:W_o\left(M\right)\times\left[0,1\right]\to V$ jointly measurable, we define the root mean square norm in $L^{q}\left(W_o\left(M\right),\nu \right)$ to be: 
\[
\left\Vert f\right\Vert _{R^{q}\left(V\right)}\equiv\left\Vert \left(\int_{0}^{1}\left|f\left(\cdot,s\right)\right|_V^{2}ds\right)^{\frac{1}{2}}\right\Vert _{L^{q}\left(W_o\left(M\right),\nu \right)}.
\]
Let $R^q$ be the space of all $f:W_o\left(M\right)\times\left[0,1\right]\to V$ such that $\left\Vert f\right\Vert _{R^{q}}<\infty$ and let $\mathcal{H}^{q}$ be the space of all Brownian semimartingales
such that 
\[
\left\Vert f\right\Vert _{\mathcal{H}^{q}}:=\left\Vert Q^{f}\right\Vert _{R^{q}}+\left\Vert r^{f}\right\Vert _{R^{q}}<\infty.
\]
Here we suppress the range space $V$ as it should be easily determined by the context.
\end{definition}
\begin{definition}[$S^q$ and $\mathcal{B}^{q}$ Space] \label{def.2.5}For
each $q\in[1,\infty]$, $f:W_o\left(M\right)\times\left[0,1\right]\to V$ jointly measurable, we define the supremum norm in $L^{q}\left(W_o\left(M\right),\nu \right)$ to be: 
\[
\left\Vert f\right\Vert _{S^{q}\left(V\right)}\equiv\left\Vert f^* \right\Vert _{L^{q}\left(W_o\left(M\right),\nu \right)}
\]where $f^*$ is the essential supremum of $s\to f\left(\cdot,s\right)$ relative to Lebesgue measure on $\left[0,1\right]$. Let $S^q$ be the space of all $f:W_o\left(M\right)\times\left[0,1\right]\to V$ such that $s\to f(s,\cdot):\left[0,1\right]\to V$ is continuous $\nu-a.s.$ and $\left\Vert f\right\Vert _{S^{q}}<\infty$ and let $\mathcal{B}^{q}$ be the space of all Brownian semimartingales
such that 
\[
\left\Vert f\right\Vert _{\mathcal{B}^{q}}:=\left\Vert Q^{f}\right\Vert _{S^{q}}+\left\Vert r^{f}\right\Vert _{S^{q}}<\infty.
\]
\end{definition}
\begin{lemma}\label{lem s1}
	For any $q\in [1,\infty)$, $f:W_o(M)\times[0,1]\to V$ such that the following norms make sense, we have
	\begin{itemize}
		\item $\left\Vert f\right\Vert _{R^{q}\left(V\right)}\leq \left\Vert f\right\Vert _{S^{q}\left(V\right)}$,
		\item $\left\Vert f\right\Vert _{\mathcal{H}^{q}\left(V\right)}\leq \left\Vert f\right\Vert _{\mathcal{B}^{q}\left(V\right)}$,
		\item $\left\Vert f\right\Vert _{S^{q}\left(V\right)}\leq C_q\left\Vert f\right\Vert _{\mathcal{H}^{q}\left(V\right)}$ for some constant $C_q>0$.
	\end{itemize}
\begin{proof}
	The first two items are trivial, so we will only prove the last item.
	
	Since $f$ has the following representation
	\[f_s=\int_{0}^{s}Q_\tau d\beta_\tau+\int_{0}^{s}r_\tau d\tau,\]
	for any $q\in [1,\infty)$, we have
	\[\left\vert f_s\right\vert^q\leq C_q\left(\left\vert \int_{0}^{s}Q_\tau d\beta_\tau\right\vert^q+\left(\int_{0}^{s}\left\vert r_\tau \right\vert d\tau\right)^q\right)\]
	and thus
	\begin{equation}
	\left\vert f^*\right\vert^q\leq C_q\left(\sup_{0\leq s\leq 1}\left\vert \int_{0}^{s}Q_\tau d\beta_\tau\right\vert^q+\left(\int_{0}^{1}\left\vert r_\tau \right\vert^2d\tau\right)^{\frac{q}{2}}\right).\label{e7}
	\end{equation}
	From Burkholder-Davis-Gundy inequality, we have
	\[\mathbb{E}_\nu\left[\sup_{0\leq s\leq 1}\left\vert \int_{0}^{s}Q_\tau d\beta_\tau\right\vert^q\right]\leq C_q\left\Vert Q\right\Vert^q_{R^q},\]
	then taking expectations on Eq.$(\ref{e7})$ we have
	\[\left\Vert f\right\Vert_{S^q}\leq C_q\left(\left\Vert Q\right\Vert_{R^q}+\left\Vert r\right\Vert_{R^q}\right)= C_q\left\Vert f\right\Vert_{\mathcal{H}^q}.\]
\end{proof}
\end{lemma}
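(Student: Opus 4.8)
The plan is to dispose of the first two items by elementary pointwise comparisons and to concentrate the real effort on the third item, for which the Burkholder--Davis--Gundy inequality will be the decisive tool.

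For the first item, I would note that by the very definition of $f^*$, for $\nu$--a.e.\ $\omega$ the number $f^*(\omega)$ dominates $|f(\omega,s)|_V$ for Lebesgue--a.e.\ $s\in[0,1]$. Since $[0,1]$ has unit length this gives $\int_0^1 |f(\omega,s)|_V^2\,ds\le (f^*(\omega))^2$, hence $\bigl(\int_0^1 |f(\omega,s)|_V^2\,ds\bigr)^{1/2}\le f^*(\omega)$ pointwise in $\omega$; applying the $L^q(W_o(M),\nu)$ norm to both sides yields $\|f\|_{R^q}\le\|f\|_{S^q}$. The second item then follows at once by applying the first to each of the two kernels $Q^f$ and $r^f$ of the semimartingale and adding the resulting inequalities, since $\|\cdot\|_{\mathcal{H}^q}$ and $\|\cdot\|_{\mathcal{B}^q}$ are defined as the same sums of $R^q$-- and $S^q$--norms of these two kernels.

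The third item is where the substance lies. I would begin from the representation $f_s=\int_0^s Q_\tau\,d\beta_\tau+\int_0^s r_\tau\,d\tau$ and apply the elementary bound $|a+b|^q\le C_q(|a|^q+|b|^q)$ to split the estimate of the running supremum into a martingale contribution and a drift contribution. For the drift, Cauchy--Schwarz over the unit interval gives $\int_0^s|r_\tau|\,d\tau\le\bigl(\int_0^1|r_\tau|^2\,d\tau\bigr)^{1/2}$ uniformly in $s$, so after raising to the $q$-th power and taking $\nu$--expectation this term is controlled by $C_q\|r\|_{R^q}^q$. For the martingale part I would invoke Burkholder--Davis--Gundy, whose quadratic variation is exactly $\int_0^\cdot|Q_\tau|^2\,d\tau$, to obtain $\mathbb{E}_\nu\bigl[\sup_{0\le s\le1}|\int_0^s Q_\tau\,d\beta_\tau|^q\bigr]\le C_q\,\mathbb{E}_\nu\bigl[(\int_0^1|Q_\tau|^2\,d\tau)^{q/2}\bigr]=C_q\|Q\|_{R^q}^q$. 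Adding the two estimates, taking $q$-th roots, and using the subadditivity of $t\mapsto t^{1/q}$ for $q\ge1$ then gives $\|f\|_{S^q}\le C_q(\|Q\|_{R^q}+\|r\|_{R^q})=C_q\|f\|_{\mathcal{H}^q}$.

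The only point that requires a moment's care is the legitimacy of the BDG step, namely that $\int_0^\cdot Q_\tau\,d\beta_\tau$ is a genuine $L^q$ martingale to which the inequality applies; this is ensured by the predictability of $Q^f$ built into the definition of a Brownian semimartingale together with the standing finiteness of $\|Q^f\|_{R^q}$. With that observed, the remaining manipulations are entirely routine, and I expect no further obstacle.
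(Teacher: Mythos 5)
Your proposal is correct and follows essentially the same route as the paper's own proof: the identical semimartingale decomposition, Cauchy--Schwarz on the drift term, and Burkholder--Davis--Gundy on the martingale part, with the paper likewise treating the first two items as immediate from the definitions. Your added remark on the legitimacy of the BDG step (predictability of $Q^f$ plus finiteness of $\left\Vert Q^f\right\Vert_{R^q}$) is a sensible clarification the paper leaves implicit, but it does not change the argument.
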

\begin{definition} [Adapted Vector Field]\label{def.2.3}
An adapted vector field on $W_o\left(M\right)$ is an $\mathbb{R}^d$--valued Brownian semimartingale
with predictable kernels $Q_{\cdot}\in \mathfrak{so}\left(d\right)$ and $r_{\cdot}\in L^{2}\left[0,1\right]\text{ }\nu-a.s.$
We denote the space of \textbf{adapted vector fields} by $\mathcal{V}$ and let $\mathcal{V}^{q}$ be $\mathcal{V}\cap\mathcal{H}^{q}$, $q\in [1,\infty]$.
\end{definition}
\begin{notation}
We will use the following notations in this paper: $S^{\infty-}:=\cap_{q\geq 1}{S^{q}}$, $\mathcal{H}^{\infty-}:=\cap_{q\geq 1}{\mathcal{H}^{q}}$, $\mathcal{B}^{\infty-}=\cap_{q\geq 1}{\mathcal{B}^{q}}$ and $\mathcal{V}^{\infty-}=\mathcal{V}\cap{\mathcal{H}^{\infty-}}$.
\end{notation}
\begin{theorem}[Approximate Flow]\label{thm flow}
Let $X^h$ be a Cameron-Martin vector field with $h\in \mathcal{V}^{\infty}\cap\mathcal{B}^{\infty}$, $t\in \mathbb{R}$, then there exists a map $E\left(tX^h\right):W_o\left(M\right)\to W_o\left(M\right)$ such that the law of $E\left(tX^h\right)$ is equivalent to $\nu$ and
\[\frac{d}{dt}\mid_0E\left(tX^h\right)=X^h\text{  in  }\mathcal{B}^{\infty-}.\]
\end{theorem}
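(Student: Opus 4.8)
The plan is to build $E(tX^h)$ on the flat side and transport it through the It\^o map, following the scheme sketched in the introduction: a genuine flow exists on $\left(W_0(\mathbb{R}^d),\mu\right)$, and its image under stochastic development is only an \emph{approximate} flow on $\left(W_o(M),\nu\right)$ because the development map is not a diffeomorphism. Recall from Fact \ref{fact 1} that the stochastic anti--rolling $\beta$ is a Brownian motion on $\left(W_0(\mathbb{R}^d),\mu\right)$ and that development $\Phi$ and the anti--development map $\beta(\cdot)$ are mutually inverse, measure--preserving maps between $\left(W_0(\mathbb{R}^d),\mu\right)$ and $\left(W_o(M),\nu\right)$. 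The point of the statement is that one does not need a true flow: only the law equivalence and the first--order behaviour at $t=0$ are asserted, and these are exactly what survive transport through $\Phi$.

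First I would identify the flat vector field corresponding to $X^h$. Differentiating $t\mapsto\Phi(\beta+tg)$ at $t=0$ for an adapted Euclidean Cameron--Martin field $g$ produces a Cameron--Martin vector field $X^{a}$ on $W_o(M)$ (in the sense of Definition \ref{cmv}) whose frame coordinate $a_s=\tilde u_s^{-1}\tfrac{d}{dt}\big|_0\Phi(\beta+tg)_s$ solves a linear Stratonovich equation driven by $g$ and the pulled--back curvature $R_{\tilde u_\cdot}$ of Notation \ref{Geo}. Under the curvature bound $\|R\|\le N$ this assignment $g\mapsto a$ is a bounded linear bijection of adapted fields, so given $h\in\mathcal{V}^\infty\cap\mathcal{B}^\infty$ there is a unique adapted $\bar h$, again with antisymmetric diffusion kernel in $\mathfrak{so}(d)$ and bounded kernels, whose development derivative is $h$. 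Let $\theta_t$ be the flow of $\bar h$ on $\left(W_0(\mathbb{R}^d),\mu\right)$, which exists and is quasi--invariant by the adapted (Girsanov) extension of Theorem \ref{thm4.3} used in \cite{StroockEnchev1995,Hsu1995,Driver1992}; the Novikov condition needed here is furnished by the $\mathcal{B}^\infty$ bound on the kernels of $\bar h$. Define
\[
E(tX^h):=\Phi\circ\theta_t\circ\beta(\cdot):W_o(M)\to W_o(M).
\]
Since $\Phi$ is measure--preserving and $(\theta_t)_*\mu\sim\mu$, the law of $E(tX^h)$ is equivalent to $\nu$, which is the first assertion. The same construction appears, under various completeness hypotheses, in \cite{Hsu2002a,HsuOuyang2009}, which I would cite for the existence half of the theorem.

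It remains to verify $\frac{d}{dt}\big|_0 E(tX^h)=X^h$ with convergence in $\mathcal{B}^{\infty-}$. I would carry out the differentiation on the flat side, where $t\mapsto\theta_t$ is a flow of a bounded--kernel adapted field, so that $t^{-1}(\theta_t(\beta)-\beta)\to\bar h$ as Brownian semimartingales, together with explicit control on the difference--quotient kernels. Transporting through $\Phi$ and using that the development derivative of $\bar h$ is $h$ identifies the limit as $X^h$. To upgrade the convergence to $\mathcal{B}^{\infty-}$ I would reduce, via the defining norms of $\mathcal{H}^q$ and $\mathcal{B}^q$, to estimating the kernels of the remainder in the $R^q$ and $S^q$ norms and then invoke the Burkholder--Davis--Gundy inequality exactly in the packaged form of Lemma \ref{lem s1}. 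The boundedness of the kernels of $h$ and $\bar h$ (the $\mathcal{B}^\infty$ hypothesis), combined with $\|R\|\le N$, supplies moment bounds uniform for $t$ in a neighbourhood of $0$ that justify interchanging the limit with the stochastic integrals.

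The main obstacle is precisely this last analytic step: because $\Phi$ is not $H$--differentiable pathwise, the identity $\frac{d}{dt}\big|_0 E(tX^h)=X^h$ cannot be read off pathwise and must be proved as a limit in the $\mathcal{B}^{\infty-}$ topology, which forces one to control the curvature remainder coming from the linearized development and to obtain moment estimates uniform in the flow parameter. The hypotheses $h\in\mathcal{V}^\infty$ (antisymmetric, bounded diffusion kernel) and $h\in\mathcal{B}^\infty$ (supremum--bounded kernels), together with the curvature bound and the comparison inequalities of Lemma \ref{lem s1}, are exactly what make these estimates close; the antisymmetry $Q_\cdot\in\mathfrak{so}(d)$ of Definition \ref{def.2.3} is what keeps the horizontal frame orthonormal along the transported flow, so that $E(tX^h)$ genuinely takes values in $W_o(M)$.
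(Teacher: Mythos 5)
The paper does not actually reprove this statement: its ``proof'' is the citation to Corollary 4.6 of \cite{Driver1999}, where $E\left(tX^h\right)$ is built directly, for each fixed $t$, on the flat side. Writing $\left(Q,r\right)$ for the kernels of $h$, one sets (roughly) $w^t:=\int_0^{\cdot}e^{tQ_s}\,d\beta_s+t\int_0^{\cdot}r_s\,ds$ and $E\left(tX^h\right):=$ the development of $w^t$: since $Q_s\in\mathfrak{so}\left(d\right)$ is adapted, L\'{e}vy's characterization shows $\int_0^{\cdot}e^{tQ_s}\,d\beta_s$ is again a Brownian motion, Girsanov (with Novikov supplied by the $\mathcal{B}^{\infty}$ bound on the kernels) gives equivalence of laws, and the derivative at $t=0$ is identified in $\mathcal{B}^{\infty-}$ by Burkholder--Davis--Gundy estimates of exactly the packaged form in Lemma \ref{lem s1}. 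Your overall architecture --- flat-side construction, transport through the It\^{o} map, BDG for the derivative --- is the same as in that construction, and your identification of the flat direction via the linearized development (with the curvature rotation $-A_s\left\langle h\right\rangle\in\mathfrak{so}\left(d\right)$) is sound in spirit.

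The genuine gap is your intermediate claim that the adapted \emph{random} field $\bar h$ generates a genuine quasi-invariant flow $\theta_t$ on $\left(W_0\left(\mathbb{R}^d\right),\mu\right)$. None of the results you invoke gives this: Theorem \ref{thm4.3} and the flow theorems of \cite{StroockEnchev1995}, \cite{Hsu1995}, \cite{Driver1992} concern \emph{deterministic} $h\in H\left(\mathbb{R}^d\right)$, and a Novikov condition yields quasi-invariance of a single fixed map, never the semigroup property $\theta_t\circ\theta_s=\theta_{t+s}$. For random $\bar h$ with kernels $\left(Q,r\right)$ the flow equation $\frac{d}{dt}\theta_t=\bar h\left(\theta_t\right)$ has a right-hand side whose stochastic integrals must be re-interpreted along the moving path $\theta_t$; its solvability is precisely the hard problem (Cruzeiro-type flow theorems, under hypotheses you have not verified) that the notion of \emph{approximate} flow was invented to bypass. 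Note the theorem you are proving asserts no flow property whatsoever --- only law equivalence of each map $E\left(tX^h\right)$ and the derivative at $t=0$ --- so your detour demands a strictly stronger input than the conclusion needs. The repair is to delete $\theta_t$ and define the one-parameter family directly per $t$ as above (rotation handled by L\'{e}vy, drift by Girsanov); with that replacement, the rest of your sketch --- the bijection $g\mapsto a$ through the linearized development and the $\mathcal{B}^{\infty-}$ limit via Lemma \ref{lem s1} --- goes through as you describe.
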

\begin{proof}
See Corollary 4.6 in \cite{Driver1999}.
\end{proof}

Using the approximate flow, we will specify a domain of an adapted Cameron-Martin vector field with the aim of setting up an integration by parts formula. A remark about other possible domains are provided after the definition below. 
\begin{definition}\label{pro112}
Let $X^h$ be an adapted Cameron-Martin vector field with $h\in \mathcal{V}^{\infty}\cap\mathcal{B}^{\infty}$ and let  $E\left(tX^h\right):W_o\left(M\right)\to W_o\left(M\right)$ be its approximate flow, then we define the domain of $X^h$ to be 
\[\mathcal{D}(X^h):=\left\{f\in L^{\infty-}\left(W_o(M), \nu\right),\frac{d}{dt}\mid_0f\left(E\left(tX^h\right)\right)\text{ exists in }L^{\infty-}\left(W_o(M), \nu\right)\right\}\subset L^2\left(W_o(M)\right)\]
and define $X^hf:=\frac{d}{dt}\mid_0f\left(E\left(tX^h\right)\right)$.
\end{definition}
\begin{remark}[H-derivative]
	The notion of differentiability in Definition \ref{pro112} is weaker than the one defined using $H$-derivative which allows a Sobolev type analysis on $(W_o(M),\nu)$. However this definition is  sufficient to admit an integration by parts formula, see Lemma \ref{lem.5.1}. Here we provide a very rough picture of how the $H$-derivative is defined.

Given $f\in \mathcal{FC}_b^1$, define the \textbf{gradient operator} $Df\in \mathcal{X}$ as follows,
\begin{equation}
D_sf:=\tilde{u}_s\sum_{i=1}^{n}(s\wedge s_i)\tilde{u}^{-1}_{s_i}grad_iF\label{gradient}
\end{equation}
where $F(\Sigma_{s_1},\cdots,\Sigma_{s_n})$ is a representation of $f$ and $grad_iF$ is the differential of $F$ with respect to the $ith$ variable.

Since $\mathcal{FC}_b^1$ is dense in $L^q(W_o(M),\nu)\text{ }\forall q\geq 1$, we know $D$ is a densely defined operator from $L^q(W_o(M),\nu)$ to $\mathcal{X}$. Furthermore, it is well-known that $D$ is closable in $L^q(W_o(M),\nu)\text{ }\forall q\geq 1$ and the domain of its extension is a Sobolev space of index $(1,q)$ on $W_o(M)$. (We will denote this space by $W^q_1(M)$.) If we treat $D$ as an operator from $L^{\infty-}(W_o(M)):=\cap_{q\geq 1}L^q(W_o(M))$ to $\to \mathcal{X}$ with domain $\mathcal{D}(D):=W_1^{\infty-}(M):=\cap_{q\geq 1}W_1^{q}(M)$, then for any $X\in \mathcal{X}$, we may define $Xf:=\left<Df,X\right>_{G^1}$ and require its domain $\mathcal{D}(X)$ to be $W_1^{\infty-}(M)$. However if $X$ is not adapted, it is not known if $X$ is in the domain of $D^*:\mathcal{X}\to W_1^{\infty-}(M)$--- a fact that easily gives rise to integration by parts. There is also the issue of dependence on initial domain when taking closure for $H$-derivative on curved Wiener space.
\end{remark}

The following example shows some advantages of Definition \ref{pro112}: basically one can show that a class of so called generalized cylinder functions are $X^h$ differentiable by explicit computations. This content is summarized from \cite{Driver1999}. 

\begin{definition}\label{gc} $f:W_o\left(M\right)\mapsto\mathbb{R}$ is called a \textbf{generalized cylinder function} if there exists a partition 
\[
\mathcal{P}:=\left\{ 0<s_{1}<\cdots<s_{n}\leq1\right\} 
\]
of $\left[0,1\right]$ and a bounded function $F\in C^m\left(\mathcal{O}\left(M\right)^{n},\mathbb{R}\right)$
such that: 
\[
f=F\left(\tilde{u}_{s_{1}},\tilde{u}_{s_{2}},\dots,\tilde{u}_{s_{n}}\right)\text{ }\nu-a.s.\]
We further require all the partial differentials of $F$ to be bounded and denote this space by $\mathcal{GFC}^m$. \end{definition}
\begin{notation} Given $k:W_o\left(M\right)\to H\left(\mathbb{R}^{d}\right)$, denote $\int_{0}^{s}R_{\tilde{u}_r}\left(k_{r},\delta\beta_{r}\right)$ by $A_{s}\left<k\right>$ when the integral makes sense, here $\delta$ is the stratonovich differential.  
\end{notation}
\begin{notation}
Suppose $F\in C\left(\mathcal{O}\left(M\right)^n\right)$ and $\mathcal{P}=\left\{0<s_1<\cdots<s_n\leq 1\right\}$ is a partition of $\left[0,1\right]$, set 
\[F\left(u\right)=F\left(u_{s_1},\dots,u_{s_n}\right),\]
then for $A:\left[0,1\right]\to \mathfrak{so}\left(d\right)$ and $h:\left[0,1\right]\to \mathbb{R}^d$, set
\[F^\prime\left(u\right)\left<A+h\right>:=\frac{d}{dt}\mid_0F\left(ue^{tA}\right)+\frac{d}{dt}\mid_0F\left(e^{tB_h}\left(u\right)\right)\]
where $ue^{tA}\left(s\right)=u_se^{tA_s}\in \mathcal{O}\left(M\right)$ and $e^{tB_h}\left(u\right)\left(s\right)=e^{tB_{h_s}}\left(u_s\right)\in \mathcal{O}\left(M\right).$
\end{notation}
\begin{theorem} \label{the.2.9}If $h\in\mathcal{V}^{\infty}\cap \mathcal{B}^{\infty}$, then $ \mathcal{GFC}^1\subset \mathcal{D}\left(X^h\right)$. In more detail, if $f=F\left(\tilde{u}\right)\in \mathcal{GFC}^1$, then
\begin{equation}
X^{h}f=F^{\prime}\left(\tilde{u}\right)\left\langle -A\left\langle h\right\rangle +h\right\rangle\text{ }\nu-a.s. \label{equ.2.4}
\end{equation}
Moreover, if $g\in \mathcal{D}\left(X^h\right)$, then 
\begin{equation}
\mathbb{E}_\nu\left[X^{h}f\cdot g\right]=\mathbb{E}_\nu\left[f\cdot\left(X^{h}\right)^{tr,\nu}g\right]
\end{equation}
where $\left(X^{h}\right)^{tr,\nu}:=-X^{h}+\int_{0}^{1}\left\langle h_{s}^{\prime},d\beta_{s}\right\rangle $.
\end{theorem}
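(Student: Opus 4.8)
The plan is to prove the two assertions in sequence: first establish $\mathcal{GFC}^1\subset\mathcal D(X^h)$ together with the differentiation formula \eqref{equ.2.4}, and then bootstrap this into the integration by parts identity by combining a divergence formula (coming from the quasi-invariance of the approximate flow) with the Leibniz rule for $X^h$. Throughout I would rely on the approximate flow $E(tX^h)$ furnished by Theorem \ref{thm flow} and on the $R^q/S^q/\mathcal H^q$ machinery of Section \ref{cha.4}.

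For the first part I would lift the approximate flow $E(tX^h)$ from $W_o(M)$ to the frame bundle path space $W_{u_0}(\mathcal O(M))$, producing a one-parameter family $\tilde u^{(t)}$ of stochastic horizontal lifts with $\tilde u^{(0)}=\tilde u$. The heart of the matter is to compute $\frac{d}{dt}\big|_0\tilde u^{(t)}_s$ as a curve in $T\mathcal O(M)$ along $\tilde u_s$ and to split it into horizontal and vertical parts relative to the connection $\omega^\nabla$. Linearizing the defining Stratonovich equation $d\tilde u_s=\sum_i B_{e_i}(\tilde u_s)\delta\beta^i_s$ along the perturbation that moves $\Sigma$ in the direction $X^h=\tilde u h$, the horizontal component is exactly the displacement $h$, while the vertical component is an $\mathfrak{so}(d)$-valued process solving a curvature-driven ODE whose solution is $-A_s\langle h\rangle=-\int_0^s R_{\tilde u_r}(h_r,\delta\beta_r)$; this is the standard variation-of-lift computation of \cite{Driver1999}. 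The chain rule then gives $\frac{d}{dt}\big|_0 F(\tilde u^{(t)}_{s_1},\dots,\tilde u^{(t)}_{s_n})=F'(\tilde u)\langle -A\langle h\rangle+h\rangle$, which is \eqref{equ.2.4}. To conclude $f\in\mathcal D(X^h)$ I would check that the difference quotient converges in $L^{\infty-}$: since $F$ and its differentials are bounded, $h\in\mathcal V^\infty\cap\mathcal B^\infty$, and $\|R\|\le N$, the Burkholder--Davis--Gundy estimate of Lemma \ref{lem s1} places $A_\cdot\langle h\rangle$ in $S^{\infty-}$, and the $\mathcal B^{\infty-}$-convergence of the flow in Theorem \ref{thm flow} upgrades a.s.\ pointwise differentiability to $L^{\infty-}$-differentiability.

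For the integration by parts formula I would first record the divergence identity
\[
\mathbb E_\nu\!\left[X^h\Phi\right]=\mathbb E_\nu\!\left[\Phi\int_0^1\langle h_s',d\beta_s\rangle\right]\qquad\text{for }\Phi\in\mathcal D(X^h),
\]
obtained by differentiating the quasi-invariance relation $\mathbb E_\nu[\Phi\circ E(tX^h)]=\mathbb E_\nu[\Phi\,k_t]$ at $t=0$, where $k_t$ is the Radon--Nikodym density of $E(tX^h)_*\nu$ with respect to $\nu$ and $k_0=1$; the interchange of $\frac{d}{dt}$ and $\mathbb E_\nu$ is legitimate because $\Phi\in\mathcal D(X^h)$ gives $L^1$-convergent difference quotients. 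The explicit logarithmic derivative $\dot k_0=\int_0^1\langle h_s',d\beta_s\rangle$ is supplied by \cite{Driver1999}, the antisymmetry $Q\in\mathfrak{so}(d)$ ensuring that the trace term that would otherwise arise in the It\^o--Skorokhod correction vanishes. Next I would verify the Leibniz rule: for $f\in\mathcal{GFC}^1$ (hence bounded with $X^hf\in L^{\infty-}$) and $g\in\mathcal D(X^h)$, the product $fg$ lies in $\mathcal D(X^h)$ with $X^h(fg)=(X^hf)g+f(X^hg)$, using that $f(E(tX^h))$ is uniformly bounded and a H\"older estimate to differentiate the product in $L^{\infty-}$. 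Applying the divergence identity to $\Phi=fg$ and subtracting $\mathbb E_\nu[f\,X^hg]$ then yields
\[
\mathbb E_\nu\!\left[X^hf\cdot g\right]=\mathbb E_\nu\!\left[f\Big(-X^hg+g\int_0^1\langle h_s',d\beta_s\rangle\Big)\right]=\mathbb E_\nu\!\left[f\,(X^h)^{tr,\nu}g\right].
\]

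The main obstacle is the computation underlying \eqref{equ.2.4}: correctly linearizing the horizontal-lift SDE under the Cameron--Martin perturbation and identifying the vertical rotation with the curvature integral $A_s\langle h\rangle$, while controlling the remainder strongly enough to pass from a.s.\ pointwise differentiation to $L^{\infty-}$-differentiation. The divergence identity is the other delicate point, but its essential content---the precise form of $\dot k_0$ and the vanishing of the trace term thanks to $Q\in\mathfrak{so}(d)$---is exactly what \cite{Driver1999} provides, so the remaining effort is bookkeeping in the $R^q$, $S^q$ and $\mathcal H^q$ norms of Section \ref{cha.4}.
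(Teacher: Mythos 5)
Your proposal is correct and is essentially the argument the paper relies on: the paper's ``proof'' is only a citation to Proposition 4.10 of \cite{Driver1999}, and what you reconstruct---linearizing the horizontal-lift SDE to split $\frac{d}{dt}\big|_0\tilde u^{(t)}$ into the horizontal displacement $h$ and the vertical curvature rotation $-A\langle h\rangle$, then differentiating the quasi-invariance density of $E\left(tX^h\right)$ to get the logarithmic derivative $\int_0^1\left\langle h_s^{\prime},d\beta_s\right\rangle$ (with the It\^o trace term killed by $Q\in\mathfrak{so}\left(d\right)$)---is exactly that argument. Your handling of the remaining analytic points (BDG/$S^{\infty-}$ control of $A\langle h\rangle$, $L^{\infty-}$ convergence of difference quotients, and the Leibniz rule for $fg$ with $f\in\mathcal{GFC}^1$ bounded) is sound and consistent with the framework of Section \ref{cha.4}.
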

\begin{proof}See Proposition 4.10 in \cite{Driver1999} . \end{proof}

We now construct a class of Cameron-Martin vector field and use it as a basis to expand the orthogonal lift $\tilde{X}$ defined in Definition \ref{def Oc}.
\begin{notation}
Recall from Notation \ref{not.4.9} that $Z_h$ satisfies the following ODE,
\begin{equation}
Z_{h}^{\prime}\left(s\right)=-\frac{1}{2}Ric_{\tilde{u}_s}Z_{h}\left(s\right)+h^{\prime}_s\text{ with }Z_{h}\left(0\right)=0.\label{equ.2.1}
\end{equation}
We will use $Z_\alpha$ as the shorthand of $Z_h$ when $h_s=\int_{0}^{s}\left(\tilde{T}_r^{-1}\right)^{\ast}e_{\alpha}dr$, $1\leq \alpha\leq d$.
\end{notation}
\begin{lemma}\label{lem Z}
Let $X^{Z_\alpha}$ be given above, then $Z_\alpha\in \mathcal{V}^{\infty}\cap \mathcal{B}^\infty$.
\end{lemma}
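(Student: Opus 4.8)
The plan is to solve the linear ODE (\ref{equ.2.1}) defining $Z_\alpha$ explicitly, read off its Brownian semimartingale kernels, and then bound them uniformly in $(s,\omega)$ using the curvature bound; the uniform bounds then give membership in all the relevant spaces at once. First I would integrate (\ref{equ.2.1}) with $h'_s=(\tilde T_s^{-1})^{*}e_\alpha$ by variation of parameters. Since $\tilde T_s$ is, by Definition \ref{def.T}, the fundamental solution of $\frac{d}{ds}\tilde T_s=-\tfrac12 Ric_{\tilde u_s}\tilde T_s$, differentiating $\tilde T_s^{-1}Z_\alpha(s)$ and using $(\tilde T_s^{-1})'=\tfrac12\tilde T_s^{-1}Ric_{\tilde u_s}$ cancels the drift and yields
\[
Z_\alpha(s)=\tilde T_s\int_0^s \tilde T_r^{-1}\bigl(\tilde T_r^{-1}\bigr)^{*}e_\alpha\,dr .
\]
Because $\tilde u$ is adapted and continuous, so are $\tilde T$, $\tilde T^{-1}$, and hence $Z_\alpha$; in particular $s\mapsto Z_\alpha(s)$ is $C^1$ $\nu$-a.s.

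Next I would identify $Z_\alpha$ as a Brownian semimartingale in the sense of Definition \ref{def.2.1}. As (\ref{equ.2.1}) is a pathwise ODE carrying no $d\beta$ term, $Z_\alpha$ has vanishing martingale kernel $Q\equiv 0\in\mathfrak{so}(d)$ and finite-variation kernel $r_s=Z_\alpha'(s)=-\tfrac12 Ric_{\tilde u_s}Z_\alpha(s)+(\tilde T_s^{-1})^{*}e_\alpha$. Both kernels are adapted and continuous in $s$, hence predictable, and $r\in L^2[0,1]$ $\nu$-a.s., so $Z_\alpha\in\mathcal{V}$ (Definition \ref{def.2.3}).

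The remaining task, and the only place where any estimate is needed, is the norm bound; there is no genuine obstacle here, since everything reduces to the uniform control already provided by the curvature bound. By the remark following Definition \ref{def.4.4-1} the estimates of Lemma \ref{lem.4.3} hold for $\tilde T$, giving $\|\tilde T_s\|,\|\tilde T_s^{-1}\|\le e^{\frac12(d-1)N}$ for all $s$, $\nu$-a.s.; combined with $\|Ric_{\tilde u_s}\|=\|Ric\|\le(d-1)N$ (the norm being preserved under conjugation by the isometry $\tilde u_s$), the explicit formula gives the $(s,\omega)$-uniform bound $|Z_\alpha(s)|\le e^{\frac32(d-1)N}$, whence $|r_s|\le\tfrac12(d-1)N\,e^{\frac32(d-1)N}+e^{\frac12(d-1)N}=:C(d,N)$. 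Therefore $\|Z_\alpha\|_{\mathcal B^\infty}=\|Q\|_{S^\infty}+\|r\|_{S^\infty}=\|r\|_{S^\infty}\le C(d,N)<\infty$, so $Z_\alpha\in\mathcal B^\infty$. Finally, since the $S^q$-norm dominates the $R^q$-norm and hence $\|\cdot\|_{\mathcal H^q}\le\|\cdot\|_{\mathcal B^q}$ (Lemma \ref{lem s1} for finite $q$, and directly from the pointwise-in-$s$ bound for $q=\infty$), the same uniform bound yields $Z_\alpha\in\mathcal H^\infty$, so $Z_\alpha\in\mathcal V\cap\mathcal H^\infty=\mathcal V^\infty$. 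Combining the two conclusions gives $Z_\alpha\in\mathcal V^\infty\cap\mathcal B^\infty$, as claimed.
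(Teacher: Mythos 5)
Your proof is correct and follows essentially the same route as the paper: identify $Z_\alpha$ as a Brownian semimartingale with $Q\equiv 0$ and adapted kernel $r=Z_\alpha'$, then deduce a deterministic bound on $Z_\alpha$, uniform in $s$ and $\sigma$, from the curvature bound. The only cosmetic difference is that you obtain the uniform bound from the explicit variation-of-parameters formula $Z_\alpha(s)=\tilde{T}_s\int_0^s\tilde{T}_r^{-1}\bigl(\tilde{T}_r^{-1}\bigr)^{\ast}e_\alpha\,dr$ together with Lemma \ref{lem.4.3}, whereas the paper invokes Gronwall's inequality directly.
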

\begin{proof} Notice that $Z_{\alpha}$ satisfies the following ODE: 
\begin{equation}
Z_{\alpha}^{\prime}\left(s\right)=-\frac{1}{2}Ric_{\tilde{u}_s}Z_{\alpha}\left(s\right)+\left(\tilde{T}_s^{-1}\right)^{\ast}e_{\alpha}\text{ with }Z_{\alpha}\left(0\right)=0.
\end{equation}
Since $\left(\tilde{T}_s^{-1}\right)^{\ast}e_{\alpha}$ is adapted, $Z_{\alpha}^{\prime}$ is adapted. So $Z_{\alpha}$ is
a Brownian semimartingale with $Q\equiv0$ and $r=Z_{\alpha}^{\prime}$. Since $\tilde{T}_s$ is bounded, from Gronwall's inequality we have $Z_{\alpha}$ is bounded $\nu-a.s$, and the bound is independent of $\sigma\in W_o\left(M\right)$ and $s\in \left[0,1\right]$. Therefore $Z_\alpha\in \mathcal{V}^{\infty}\cap \mathcal{B}^\infty$.
  \end{proof}
\begin{definition}\label{equ.4.10-1}
	Define the domain of $\tilde{X}$ to be
	\[\mathcal{D}(\tilde{X}):=\cap_{\alpha=1}^d\mathcal{D}(X^{Z_\alpha})\]
	and for any $f\in \mathcal{D}(\tilde{X})$, set \[\tilde{X}f:=\sum_{\text{\ensuremath{\alpha=1}}}^{d}\left\langle \tilde{C}\tilde{H},e_{\alpha}\right\rangle X^{Z_\alpha}f,\] where $\tilde{C}=\left[\int_{0}^{1}\left(\tilde{T}_r^{\ast}\tilde{T}_r\right)^{-1}dr\right]^{-1}\tilde{T}_1^{-1}$.
\end{definition}
\begin{remark}
	To motivate this definition, we formally use the $H$-derivative. Notice that from Definition \ref{def Oc}:
\[\Phi_s=\int_{0}^{s}\left(\tilde{T}_{\tau}^{-1}\right){}^{\ast}\left[\int_{0}^{1}\left(\tilde{T}^{\ast}_r\tilde{T}_r\right){}^{-1}dr\right]^{-1}\tilde{T}_1^{-1}\tilde{H}d\tau=\sum_{\alpha=1}^{d}\left\langle \tilde{C}\tilde{H},e_{\alpha}\right\rangle \int_{0}^{s}\left(\tilde{T}_r^{-1}\right)^{\ast}e_{\alpha}dr,\]
by superposition principle, 
\[
Z_{\Phi}\left(s\right)=\sum_{\text{\ensuremath{\alpha=1}}}^{d}\left\langle \tilde{C}\tilde{H},e_{\alpha}\right\rangle Z_{\alpha}\left(s\right)
\]
and further 
\begin{equation}
X^{Z_{\Phi}}f=\left<Df,X^{Z_{\Phi}}\right>_{G^1}=\sum_{\text{\ensuremath{\alpha=1}}}^{d}\left\langle \tilde{C}\tilde{H},e_{\alpha}\right\rangle\left<Df,X^{Z_{\alpha}}\right>_{G^1}=\sum_{\text{\ensuremath{\alpha=1}}}^{d}\left\langle \tilde{C}\tilde{H},e_{\alpha}\right\rangle X^{Z_{\alpha}}f.\label{eq:-5}
\end{equation}
\end{remark}
\subsection{Computing $\tilde{X}^{tr,\nu}$ \label{sec.4.4}}
This subsection is devoted to the study of $\tilde{X}^{tr,\nu}$ (The
adjoint operator of $\tilde{X}$ with respect to $\nu$ restricted to $\mathcal{D}\left(\tilde{X}\right)$). The crucial step to show its existence is checking the anticipating coefficients in  $\left(\ref{eq:-5}\right)$ are differentiable in the sense of Definition  \ref{pro112}.

\begin{proposition}\label{prop Ric}
Our standard assumption of bounded curvature tensor implies that $Ric$ is bounded. If we further assume $\nabla R$ is bounded, then for any $h\in \mathcal{V}^{\infty-}\cap \mathcal{B}^{\infty-}$ and $s\in \left[0,1\right]$, we have $Ric_{\tilde{u}_s} \in \mathcal{D}\left(X^h\right)$. Moreover, the map
\begin{equation}
s\to X^hRic_{\tilde{u}_s}\in S^{\infty-}.\label{dif Ric}
\end{equation}
\end{proposition}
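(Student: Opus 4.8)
The plan is to recognize $Ric_{\tilde{u}_s}$ as (the matrix entries of) a single-time generalized cylinder function and then invoke Theorem \ref{the.2.9} directly. Define $\Psi:\mathcal{O}(M)\to\operatorname{End}(\mathbb{R}^d)$ by $\Psi(u):=u^{-1}Ric_{\pi(u)}u$, so that in the notation of Notation \ref{Geo} one has $Ric_{\tilde{u}_s}=\Psi(\tilde{u}_s)$, realized with the one-point partition $\{s\}$. First I would verify that each entry $\Psi_{ij}\in\mathcal{GFC}^1$. Boundedness is immediate: since $u$ is a linear isometry, $\lVert\Psi(u)\rVert=\lVert Ric_{\pi(u)}\rVert\le(d-1)N$. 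For the differential I would split $T\mathcal{O}(M)$ into vertical and horizontal parts as in the definition of $F'(u)\langle A+h\rangle$. Along the vertical direction $ue^{tA}$ one has $\Psi(ue^{tA})=e^{-tA}\Psi(u)e^{tA}$, hence $\tfrac{d}{dt}\!\mid_0\Psi(ue^{tA})=[\Psi(u),A]$, which is bounded by $2(d-1)N\lVert A\rVert$. Along the horizontal direction the base point moves while the frame is parallel transported, so $\tfrac{d}{dt}\!\mid_0\Psi(e^{tB_a}(u))=u^{-1}(\nabla_{ua}Ric)u=:(\nabla_aRic)_u$, and this is bounded by $c_d\lVert\nabla R\rVert\,|a|$ because $Ric$ is a trace of $R$ and hence $\nabla Ric$ is a contraction of $\nabla R$. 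The hypothesis that $\nabla R$ is bounded is precisely what makes this horizontal derivative bounded, so $\Psi_{ij}\in\mathcal{GFC}^1$.

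With this in hand, applying Theorem \ref{the.2.9} entrywise gives $Ric_{\tilde{u}_s}\in\mathcal{D}(X^h)$ together with the explicit formula
\[
X^hRic_{\tilde{u}_s}=-[Ric_{\tilde{u}_s},A_s\langle h\rangle]+(\nabla_{h_s}Ric)_{\tilde{u}_s},
\]
where the commutator comes from the vertical slot $-A\langle h\rangle$ and the covariant-derivative term from the horizontal slot $h$ in $F'(\tilde{u})\langle -A\langle h\rangle+h\rangle$, using the two derivative computations above.

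For the \textquotedblleft moreover\textquotedblright\ claim I would estimate the right-hand side in $S^{\infty-}$ term by term. The commutator is controlled by $\lVert Ric_{\tilde{u}_s}\rVert\le(d-1)N$ times $|A_s\langle h\rangle|$, and the covariant-derivative term by $c_d\lVert\nabla R\rVert\,|h_s|$. Since $h\in\mathcal{B}^{\infty-}$, Lemma \ref{lem s1} gives $\lVert h\rVert_{S^q}\le C_q\lVert h\rVert_{\mathcal{H}^q}\le C_q\lVert h\rVert_{\mathcal{B}^q}<\infty$ for every $q$, so $\sup_s|h_s|\in L^{\infty-}$. It remains to bound $\sup_s|A_s\langle h\rangle|$ in every $L^q$: here I would rewrite the Stratonovich integral $A_s\langle h\rangle=\int_0^sR_{\tilde{u}_r}(h_r,\delta\beta_r)$ in It\^o form, where the martingale part has integrand $R_{\tilde{u}_r}(h_r,\cdot)$ with $|R_{\tilde{u}_r}(h_r,\cdot)|\le N|h_r|$, and the Stratonovich correction is a bounded-variation term whose integrand is built from $\nabla R$ (bounded) and the kernels of $h$ and of $\tilde{u}$. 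Thus $A\langle h\rangle$ is a Brownian semimartingale in $\mathcal{H}^{\infty-}$, and Lemma \ref{lem s1} again yields $A\langle h\rangle\in S^{\infty-}$. Combining, $\sup_s\lVert X^hRic_{\tilde{u}_s}\rVert\le 2(d-1)N\sup_s|A_s\langle h\rangle|+c_d\lVert\nabla R\rVert\sup_s|h_s|\in L^{\infty-}$, and continuity of $s\mapsto X^hRic_{\tilde{u}_s}$ follows from continuity of all the ingredients, giving membership in $S^{\infty-}$.

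The main obstacle I anticipate is the careful handling of the Stratonovich term $A_s\langle h\rangle$: one must pass to the It\^o formulation and check that the correction term's integrand is genuinely controlled by $\nabla R$ and the $\mathcal{B}^{\infty-}$-kernels of $h$, so that the Burkholder--Davis--Gundy estimate packaged in Lemma \ref{lem s1} applies uniformly across all $L^q$. The geometric identification of the horizontal derivative with the pulled-back $\nabla Ric$, though routine, is the second place where the boundedness of $\nabla R$ enters essentially.
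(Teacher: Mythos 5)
Your proposal is correct and follows essentially the same route as the paper's proof: recognize $Ric_{\tilde{u}_s}$ as a generalized cylinder function, apply Theorem \ref{the.2.9} to obtain the decomposition $X^hRic_{\tilde{u}_s}=\left(\nabla_{X_s^h}Ric\right)_{\tilde{u}_s}+\left[A_s\left<h\right>,Ric_{\tilde{u}_s}\right]$, bound the covariant-derivative term by the boundedness of $\nabla Ric$ together with $h^*\in L^{\infty-}$, and control $\sup_s\left\vert A_s\left<h\right>\right\vert$ by passing from Stratonovich to It\^{o} form and applying Burkholder--Davis--Gundy. Your extra details (the explicit verification that $\Psi(u)=u^{-1}Ric_{\pi(u)}u$ lies in $\mathcal{GFC}^1$, and routing the semimartingale estimate through Lemma \ref{lem s1} rather than invoking BDG directly) are merely a repackaging of the same estimates, not a different argument.
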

\begin{proof}
Since for any $s\in \left[0,1\right]$, $Ric_{\tilde{u}_s}\in \mathcal{GFC}^1$, from Theorem \ref{the.2.9} we know $Ric_{\tilde{u}_s}\in \mathcal{D}\left(X^h\right)$ and
\begin{equation}
X^hRic_{\tilde{u}_s}=\left(\nabla_{X_s^h}Ric\right)_{\tilde{u}_s}+\left[A_s\left<h\right>,Ric_{\tilde{u}_s}\right],\label{Ric1}
\end{equation} 
where $\left[\cdot,\cdot\right]$ is the Lie bracket of matrices and $\left(\nabla_{X_s^h}Ric\right)_{\tilde{u}_s}:\mathbb{R}^d\to \mathbb{R}^d$ is defined to be 
\[\left(\nabla_{X_s^h}Ric\right)_{\tilde{u}_s}={\tilde{u}_s}^{-1}(\nabla_{X_s^h}Ric)\cdot\tilde{u}_s.\]
Since $\nabla Ric$ is bounded, 
\begin{equation*}
\left|\left(\nabla_{X_s^h}Ric\right)_{\tilde{u}_s}\right|\leq C\left<X^h_s,X^h_s\right>_g^{\frac{1}{2}}= C\left\vert h_s\right\vert\leq Ch^*, 
\end{equation*}
where $C$ is a constant and $h^*$ is the essential supremum of $s\to h_s$. For any $q\in [1,\infty)$,  since $h\in \mathcal{B}^{\infty-}\subset S^{\infty-}$, we know
\begin{equation}
\sup_{s\in \left[0,1\right]}\left|\left(\nabla_{X_s^h}Ric\right)_{\tilde{u}_s}\right|\in L^{\infty-}\left(W_o\left(M\right)\right)\label{cRic}.
\end{equation}
Then we express $A_s\left<h\right>$ in It\^{o} form:
\[A_s\left<h\right>=\int_{0}^{s}R_{\tilde{u}_r}\left(h_r,d\beta_r\right)+\frac{1}{2}\sum_{i=1}^{d}\int_{0}^{s}\left\{R_{\tilde{u}_r}\left(Q^h_re_i,e_i\right)+\left(\frac{d}{dt}\mid_0R_{e^{tB_{e_i}}(\tilde{u}_r)}\right)\left(h_r,e_i\right)\right\}dr.\]
Since $R$ and $\nabla R$ are bounded, for any $s\in [0,1], q\geq 1$,
\begin{equation}
\left\vert \frac{1}{2}\sum_{i=1}^{d}\int_{0}^{s}\left\{R_{\tilde{u}_r}\left(Q^h_re_i,e_i\right)+\left(\frac{d}{dt}\mid_0R_{e^{tB_{e_i}}(\tilde{u}_r)}\right)\left(h_r,e_i\right)\right\}dr\right\vert^q\leq C_q\left(\left\Vert  Q^h\right\Vert^q_{L^q([0,1])}+(h^*)^q\right).\label{13}
\end{equation}
Using Burkholder-Davis-Gundy inequality, for any $q\in \left[1,\infty\right)$, 
\begin{equation}
\mathbb{E}\left[\sup_{s\in \left[0,1\right]}\left|\int_{0}^{s}R_{\tilde{u}_r}\left(h_r,d\beta_r\right)\right|^q\right]\leq C\left\Vert h\right\Vert^{\frac{q}{2}}_{L^{\frac{q}{2}}\left(W_o\left(M\right)\right)}<\infty.\label{14}
\end{equation}
Combining Eq.(\ref{13}) and (\ref{14}) we have
\[\sup_{s\in \left[0,1\right]}\left|A_s\left<h\right>\right|\in L^{\infty-}\left(W_o\left(M\right)\right).\]
Since $Ric$ is bounded, we have
\begin{equation}
\sup_{s\in \left[0,1\right]}\left\vert\left[A_s\left<h\right>,Ric_{\tilde{u}_s}\right]\right\vert\in L^{\infty-}\left(W_o\left(M\right)\right).\label{Ric}
\end{equation}
Combining (\ref{Ric1}), (\ref{cRic}) and (\ref{Ric}) gives (\ref{dif Ric}).
\end{proof}
\begin{lemma}\label{lmC}Let $\tilde{C}$ be as defined in Lemma \ref{equ.4.10-1}, then $\tilde{C}\in L^{\infty-}\left(W_o(M),\nu\right)$.
\end{lemma}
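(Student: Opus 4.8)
The plan is to show something slightly stronger than claimed: that $\tilde{C}$ is in fact essentially bounded, i.e. $\tilde{C}\in L^{\infty}\left(W_o(M),\nu\right)\subset L^{\infty-}\left(W_o(M),\nu\right)$, with an explicit $\nu$-a.s. bound depending only on $d$ and the curvature bound $N$. Writing $B:=\int_{0}^{1}\left(\tilde{T}_r^{\ast}\tilde{T}_r\right)^{-1}dr$, we have $\tilde{C}=B^{-1}\tilde{T}_1^{-1}$, so by submultiplicativity it suffices to bound $\left\Vert \tilde{T}_1^{-1}\right\Vert$ and $\left\Vert B^{-1}\right\Vert$ separately. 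The key observation is that, because the curvature bound $\left\Vert\operatorname*{Ric}\right\Vert\leq(d-1)N$ holds \emph{deterministically} on $M$, the bounds of Lemma \ref{lem.4.3} and Lemma \ref{lem.4.6} transfer to the stochastic objects $\tilde{T}$ and $\tilde{\mathbf{K}}$ with the same non-random constants (this is exactly the content of the Remark following Definition \ref{def.4.4-1}).

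First I would handle the factor $\tilde{T}_1^{-1}$: by Lemma \ref{lem.4.3} applied $\nu$-a.s. to $\tilde{T}$, we get $\left\Vert \tilde{T}_1^{-1}\right\Vert\leq e^{\frac{1}{2}(d-1)N}$ $\nu$-a.s. Next I would bound $\left\Vert B^{-1}\right\Vert$ directly. Since $\left(\tilde{T}_r^{\ast}\tilde{T}_r\right)^{-1}=\tilde{T}_r^{-1}\left(\tilde{T}_r^{-1}\right)^{\ast}$, the matrix $B$ is symmetric positive definite, and for any $v\in\mathbb{C}^d$,
\[
\left\langle Bv,v\right\rangle=\int_{0}^{1}\left\Vert \left(\tilde{T}_r^{-1}\right)^{\ast}v\right\Vert^{2}dr\geq\int_{0}^{1}\frac{\left\Vert v\right\Vert^{2}}{\left\Vert \tilde{T}_r\right\Vert^{2}}dr\geq e^{-(d-1)N}\left\Vert v\right\Vert^{2},
\]
where the middle inequality uses $\left\Vert v\right\Vert\leq\left\Vert \tilde{T}_r^{\ast}\right\Vert\left\Vert \left(\tilde{T}_r^{-1}\right)^{\ast}v\right\Vert$ and the last uses $\left\Vert \tilde{T}_r\right\Vert\leq e^{\frac{1}{2}(d-1)N}$ from Lemma \ref{lem.4.3}. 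By the Min-max theorem this gives $eig(B)\subset[e^{-(d-1)N},\infty)$, hence $\left\Vert B^{-1}\right\Vert\leq e^{(d-1)N}$ $\nu$-a.s. (Alternatively, one may observe $\tilde{\mathbf{K}}_1=\tilde{T}_1 B\tilde{T}_1^{\ast}$, so that $B^{-1}=\tilde{T}_1^{\ast}\tilde{\mathbf{K}}_1^{-1}\tilde{T}_1$, and invoke Lemma \ref{lem.4.6} together with Lemma \ref{lem.4.3}; this route gives the same kind of deterministic bound.) Combining the two estimates yields
\[
\left\Vert \tilde{C}\right\Vert\leq\left\Vert B^{-1}\right\Vert\left\Vert \tilde{T}_1^{-1}\right\Vert\leq e^{\frac{3}{2}(d-1)N}\quad\nu\text{-a.s.},
\]
from which $\tilde{C}\in L^{\infty}\subset L^{\infty-}$ follows immediately.

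I do not expect any genuine obstacle here; the statement is essentially a corollary of the a.s. bounds already established for $\tilde{T}$ and $\tilde{\mathbf{K}}$. The only point worth stressing is conceptual rather than technical: it is precisely the \emph{uniform} (path-independent) curvature bound that makes every constant in the estimate deterministic, so that the integrand $\left(\tilde{T}_r^{\ast}\tilde{T}_r\right)^{-1}$ stays uniformly bounded above and below over $r\in[0,1]$ and $\sigma\in W_o(M)$. Were the bound to depend on the Brownian path, one would only obtain membership in some $L^q$ and would need moment estimates; here no such analysis is required.
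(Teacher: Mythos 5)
Your proof is correct and follows essentially the same route as the paper: factor $\tilde{C}=B^{-1}\tilde{T}_1^{-1}$ with $B=\int_{0}^{1}\tilde{T}_r^{-1}\left(\tilde{T}_r^{-1}\right)^{\ast}dr$, use the $\nu$-a.s.\ deterministic bound on $\left\Vert\tilde{T}_1^{-1}\right\Vert$, and obtain a deterministic lower bound on the spectrum of $B$ via $\left\langle Bv,v\right\rangle=\int_{0}^{1}\left\Vert\left(\tilde{T}_r^{-1}\right)^{\ast}v\right\Vert^{2}dr\geq C\left\Vert v\right\Vert^{2}$. The only difference is cosmetic: you make the constant $e^{\frac{3}{2}\left(d-1\right)N}$ explicit (thereby noting $\tilde{C}\in L^{\infty}$), whereas the paper leaves $C$ unnamed.
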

\begin{proof}
	Since $\left\Vert\tilde{T}_1^{-1}\right\Vert$ is bounded $\nu-a.s$, it suffices to show $	\left\Vert \left(\int_{0}^{1}\tilde{T}_r^{-1}(T_r^{-1})^{*}dr\right)^{-1}\right\Vert$ is bounded $\nu-a.s.$
	For any $v\in\mathbb{C}^{d},$ 
	\[
	\left\langle \left(\int_{0}^{1}\tilde{T}_r^{-1}(\tilde{T}_r^{-1})^{*}dr\right)v,v\right\rangle =\int_{0}^{1}\left\Vert (\tilde{T}_r^{-1})^{*}v\right\Vert ^{2}dr\geq C\left\Vert v\right\Vert ^{2}.\text{ }\nu-a.s.
	\]
	So 
	\[
	\left\Vert \left(\int_{0}^{1}\tilde{T}_r^{-1}(T_r^{-1})^{*}dr\right)^{-1}\right\Vert \leq\frac{1}{C}\text{ }\nu-a.s.
	\]
	where $C$ is a deterministic constant
\end{proof}
\begin{theorem}\label{thm419}Let $\tilde{T}_s$ be as defined in Definition \ref{def.T}, then $\tilde{T}_s\in \mathcal{D}\left(X^{Z_\alpha}\right)\text{  for  }1\leq \alpha \leq d.$
\end{theorem}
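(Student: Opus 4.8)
The plan is to differentiate the defining ODE of $\tilde{T}_s$ along the approximate flow $\phi_t := E\left(tX^{Z_\alpha}\right)$ furnished by Theorem \ref{thm flow}, and to identify the derivative with the solution of a linear inhomogeneous ODE whose inhomogeneity is built from $X^{Z_\alpha}Ric_{\tilde{u}_s}$. Throughout I treat the $End\left(\mathbb{R}^d\right)$--valued functional $\tilde{T}_s$ componentwise, so that $\tilde{T}_s\in\mathcal{D}\left(X^{Z_\alpha}\right)$ means each entry lies in the domain of Definition \ref{pro112}. By Lemma \ref{lem Z} we have $Z_\alpha\in\mathcal{V}^\infty\cap\mathcal{B}^\infty\subset\mathcal{V}^{\infty-}\cap\mathcal{B}^{\infty-}$, so, under the standing assumption that $\nabla R$ is bounded, Proposition \ref{prop Ric} applies with $h=Z_\alpha$ and yields both $Ric_{\tilde{u}_s}\in\mathcal{D}\left(X^{Z_\alpha}\right)$ and the crucial fact that $s\mapsto X^{Z_\alpha}Ric_{\tilde{u}_s}\in S^{\infty-}$.

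First I would record that, by uniqueness of solutions to the linear ODE in Definition \ref{def.T}, the composition $\tilde{T}_s\circ\phi_t$ solves the same initial value problem with $Ric_{\tilde{u}_s}$ replaced by $Ric_{\tilde{u}_s}\circ\phi_t$. Motivated by differentiating this in $t$ at $t=0$, I define the candidate derivative $D_s$ as the unique solution of the variational equation
\[
D_s' = -\tfrac12\,Ric_{\tilde{u}_s}D_s - \tfrac12\left(X^{Z_\alpha}Ric_{\tilde{u}_s}\right)\tilde{T}_s,\qquad D_0=0.
\]
Since $Ric$ is bounded by the curvature bound, $\sup_s\left\Vert\tilde{T}_s\right\Vert$ is bounded $\nu$--a.s. (the analog of Lemma \ref{lem.4.3} for $\tilde{T}$ noted in the remark following Definition \ref{def.4.4-1}), and $X^{Z_\alpha}Ric_{\tilde{u}_s}\in S^{\infty-}$, variation of parameters together with Gronwall's inequality shows $D_s\in S^{\infty-}\subset L^{\infty-}$, with the bound uniform in $s$.

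It remains --- and this is the main obstacle --- to verify that the difference quotient $\Delta_t:=\frac1t\left(\tilde{T}_s\circ\phi_t-\tilde{T}_s\right)$ converges to $D_s$ in $L^{\infty-}$ as $t\to0$, so that the derivative in the sense of Definition \ref{pro112} exists and equals $D_s$. Subtracting the two ODEs and splitting
\[
\left(Ric_{\tilde{u}_s}\circ\phi_t\right)\left(\tilde{T}_s\circ\phi_t\right)-Ric_{\tilde{u}_s}\tilde{T}_s=\left(Ric_{\tilde{u}_s}\circ\phi_t\right)\left(\tilde{T}_s\circ\phi_t-\tilde{T}_s\right)+\left(\left(Ric_{\tilde{u}_s}\circ\phi_t\right)-Ric_{\tilde{u}_s}\right)\tilde{T}_s,
\]
one finds that $\Delta_t$ satisfies
\[
\Delta_t' = -\tfrac12\left(Ric_{\tilde{u}_s}\circ\phi_t\right)\Delta_t-\tfrac12\,\frac{\left(Ric_{\tilde{u}_s}\circ\phi_t\right)-Ric_{\tilde{u}_s}}{t}\,\tilde{T}_s,\qquad \Delta_0=0.
\]
Forming $\Delta_t-D_s$ gives a linear ODE with zero initial data whose source term is controlled, via the splitting above, by $\left\Vert\left(Ric_{\tilde{u}_s}\circ\phi_t\right)-Ric_{\tilde{u}_s}\right\Vert$ and by $\left\Vert\frac{1}{t}\left(\left(Ric_{\tilde{u}_s}\circ\phi_t\right)-Ric_{\tilde{u}_s}\right)-X^{Z_\alpha}Ric_{\tilde{u}_s}\right\Vert$, both of which tend to $0$ in $L^{\infty-}$ as $t\to0$ by Proposition \ref{prop Ric} and the definition of $\mathcal{D}\left(X^{Z_\alpha}\right)$. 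Since $Ric_{\tilde{u}_s}\circ\phi_t$ is uniformly bounded and $\tilde{T}_s,D_s\in S^{\infty-}$, a Gronwall estimate with constant depending only on the curvature bound yields $\left\Vert\Delta_t-D_s\right\Vert_{S^{\infty-}}\to0$, i.e. $\tilde{T}_s\in\mathcal{D}\left(X^{Z_\alpha}\right)$ with $X^{Z_\alpha}\tilde{T}_s=D_s$. The delicate point throughout is to keep every Gronwall constant deterministic --- hence the reliance on the $L^\infty$ curvature bounds rather than merely $L^{\infty-}$ ones --- so that the convergence genuinely takes place in every $L^q$.
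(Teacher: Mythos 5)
Your overall architecture is the same as the paper's: pull $\tilde{T}$ back along the approximate flow $E\left(tX^{Z_\alpha}\right)$ of Theorem \ref{thm flow} (using Lemma \ref{lem Z}), observe the difference quotient solves a linear ODE, define the candidate derivative $D_s$ by the variational equation with source $-\frac{1}{2}\left(X^{Z_\alpha}Ric_{\tilde{u}_s}\right)\tilde{T}_s$, and close with a Gronwall estimate with deterministic constant. Your splitting is in fact marginally cleaner than the paper's: by putting $Ric_{\tilde{u}_s}\circ\phi_t$ on the homogeneous term and the \emph{unshifted} $\tilde{T}_s$ on the source, you exploit that the shifted Ricci is deterministically bounded by $(d-1)N$, and you thereby avoid the paper's auxiliary step (its Eq.~(\ref{f2})) that $\tilde{T}_r(t)\to\tilde{T}_r$ in $L^{\infty-}$ uniformly in $r$.

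There is, however, a genuine gap at the last step. After Gronwall you must show that
\[
\int_0^s \left\Vert \frac{\left(Ric_{\tilde{u}_r}\circ\phi_t\right)-Ric_{\tilde{u}_r}}{t}-X^{Z_\alpha}Ric_{\tilde{u}_r}\right\Vert_{L^q\left(W_o(M)\right)}dr \longrightarrow 0 \text{ as } t\to 0,
\]
together with the analogous statement for the term $\left(Ric_{\tilde{u}_r}\circ\phi_t-Ric_{\tilde{u}_r}\right)D_r$, and for this you cite only Proposition \ref{prop Ric} and the definition of $\mathcal{D}\left(X^{Z_\alpha}\right)$. But Definition \ref{pro112} and Theorem \ref{the.2.9} give $L^q$ convergence of the difference quotient \emph{at each fixed} $r$; they say nothing about uniformity in $r$, nor do they supply a dominating function in $r$ that is uniform in $t$, so neither uniform nor dominated convergence of the integrated source follows from what you invoke. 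This is precisely the point for which the paper proves a standalone result, Lemma \ref{lem s}: since $t\mapsto Ric_{\tilde{u}_{(\cdot)}}\circ\phi_t$ is a family of Brownian semimartingales which is $\mathcal{B}^p$-differentiable at $t=0$ (using $Z_\alpha\in\mathcal{V}^{\infty}\cap\mathcal{B}^{\infty}$ and the boundedness of $Ric$ and $\nabla Ric$), the semimartingale representation combined with the Burkholder--Davis--Gundy inequality upgrades the pointwise $L^p$ differentiability to differentiability that is uniform in the running parameter $r\in[0,1]$. Without this ingredient (or an equivalent domination argument) your claim that the source tends to zero in $L^{\infty-}$ is unsupported and the Gronwall step does not close; inserting the content of Lemma \ref{lem s} repairs the argument, after which it coincides with the paper's proof.
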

First we state a supplementary lemma.
\begin{lemma}\label{lem s}
	Let $\left\{f_{(\cdot)}(t)\right\}_{t\in \mathbb{R}}$ be $V-$valued Brownian semi-martingales which are $\mathcal{B}^p-$differentiable for some $p\geq 1$ at $t=0$, then for any $s\in [0,1]$, $\left\{f_{s}(t)\right\}_{t\in \mathbb{R}}$ are differentiable at $t=0$ in $L^p(W_o(M)\to V)$. Furthermore, 
	\[\left\Vert\frac{f_s(t)-f_s(0)}{t}-\frac{d}{dt}\mid_0f_{s}(t)\right\Vert_{L^p(W_o(M)\to V)}\to 0\text{ as }t\to 0\text{ uniformly with respect to s}.\]
\end{lemma}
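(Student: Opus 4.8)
The plan is to deduce the pointwise (in $s$) $L^p$-differentiability, together with its uniformity in $s$, directly from the definition of $\mathcal{B}^p$-differentiability by passing through the supremum norm $S^p$ and then invoking the comparison $\left\Vert\cdot\right\Vert_{S^p}\leq C_p\left\Vert\cdot\right\Vert_{\mathcal{H}^p}\leq C_p\left\Vert\cdot\right\Vert_{\mathcal{B}^p}$ established in Lemma \ref{lem s1}. The key observation is that a single $\mathcal{B}^p$-bound simultaneously dominates all of the $s$-indexed $L^p$-norms, which is exactly what uniformity requires.

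First I would let $g:=\frac{d}{dt}\mid_0 f_{(\cdot)}(t)$ denote the $\mathcal{B}^p$-derivative, which is again a $V$-valued Brownian semimartingale, and form the remainder
\[
F(t):=\frac{f_{(\cdot)}(t)-f_{(\cdot)}(0)}{t}-g
\]
for $t\neq 0$. Since Brownian semimartingales form a vector space (the difference quotient and $g$ having kernels that are the corresponding linear combinations of the kernels of $f_{(\cdot)}(t)$, $f_{(\cdot)}(0)$ and $g$), $F(t)$ is again a Brownian semimartingale, and $\mathcal{B}^p$-differentiability at $t=0$ is precisely the assertion that $\left\Vert F(t)\right\Vert_{\mathcal{B}^p}\to 0$ as $t\to 0$.

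Next, for each fixed $s\in[0,1]$ I would bound the $s$-evaluation by the supremum over time. Because the sample paths of a Brownian semimartingale are continuous, the essential supremum $F^*(t)$ of $\tau\mapsto F_\tau(t)$ coincides with the genuine supremum, so $\left\vert F_s(t)\right\vert\leq F^*(t)$ holds for every $s$. Taking $L^p(W_o(M)\to V)$-norms and applying Lemma \ref{lem s1} then yields
\[
\left\Vert F_s(t)\right\Vert_{L^p}\leq\left\Vert F^*(t)\right\Vert_{L^p}=\left\Vert F(t)\right\Vert_{S^p}\leq C_p\left\Vert F(t)\right\Vert_{\mathcal{B}^p}.
\]
Since the right-hand side is independent of $s$ and tends to $0$, this shows at once that for each $s$ the difference quotient $\frac{f_s(t)-f_s(0)}{t}$ converges in $L^p$ to $g_s=\frac{d}{dt}\mid_0 f_s(t)$, and that this convergence is uniform in $s$, which is the assertion of the lemma.

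The step I expect to require the most care is the reduction from the pointwise $s$-evaluation to the supremum norm: one must ensure that $\left\vert F_s(t)\right\vert\leq F^*(t)$ holds for each individual $s$ rather than merely for almost every $s$, which is exactly where the sample-path continuity built into the $S^p$/$\mathcal{B}^p$ framework is used. Once this is secured, the uniform comparison $\left\Vert\cdot\right\Vert_{S^p}\leq C_p\left\Vert\cdot\right\Vert_{\mathcal{B}^p}$ from Lemma \ref{lem s1} carries all the analytic weight, and no further estimates are needed.
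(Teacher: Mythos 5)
Your proof is correct and takes essentially the same route as the paper: both arguments reduce $\mathcal{B}^p$-differentiability to $S^p$-convergence of the kernel difference quotients and then control the evaluation $F_s(t)$ by the running supremum via the Burkholder--Davis--Gundy inequality, with uniformity in $s$ coming from the $s$-independence of the resulting bound. The only difference is packaging: you cite the chain $\left\Vert \cdot\right\Vert_{S^p}\leq C_p\left\Vert \cdot\right\Vert_{\mathcal{H}^p}\leq C_p\left\Vert \cdot\right\Vert_{\mathcal{B}^p}$ from Lemma \ref{lem s1} applied to the remainder semimartingale $F(t)$, whereas the paper reruns the same BDG computation inline on the martingale and drift parts separately.
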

\begin{proof}
	We represent $f_{(\cdot)}(t):=\int_{0}^{\cdot}Q_s^f(t)d\beta_s+\int_{0}^{\cdot}r_s^f(t)ds$ and denote $\frac{d}{dt}\mid_0f_{(\cdot)}(t)$ by \[g_{(\cdot)}:=\int_{0}^{\cdot}Q_s^gd\beta_s+\int_{0}^{\cdot}r_s^gds.\] $\frac{d}{dt}\mid_0f_{(\cdot)}(t)=g_{(\cdot)}$ in $\mathcal{B}^p$ implies that $\frac{d}{dt}\mid_0Q^f(t)=Q^g$ in $S^p(Hom(\mathbb{R}^d,V))$ and $\frac{d}{dt}\mid_0r^f(t)=r^g$ in $S^p(V)$.
	Since for any $s\in [0,1]$,  
	\begin{align*}
	&\left\Vert\frac{f_s(t)-f_s(0)}{t}-g_s\right\Vert^p_V\\&\leq C_p\left[\left\Vert \int_{0}^{s}\left(\frac{Q_\tau^f(t)-Q_\tau^f(0)}{t}-Q_\tau^g\right)d\beta_\tau \right\Vert^p_V+\left\Vert \int_{0}^{s}\left(\frac{r_\tau^f(t)-r_\tau^f(0)}{t}-r_\tau^g\right)d\tau  \right\Vert^p_V\right]
	\end{align*}
	Taking expectation on both hand side and using Burkholder-Davis-Gundy inequality on the first term, we have
	\[\left\Vert\frac{f_s(t)-f_s(0)}{t}-g_s\right\Vert_{L^p(W_o(M)\to V)}\leq C_p\left[\left\Vert \frac{Q_\tau^f(t)-Q_\tau^f(0)}{t}-Q_\tau^g \right\Vert_{\mathcal{S}^p}+\left\Vert \frac{r_\tau^f(t)-r_\tau^f(0)}{t}-r_\tau^g  \right\Vert_{S^p}\right]\to 0 \]
	as $t\to 0$. The uniformity with respect to $s$ is easily seen from the fact that the dominating function is independent of $s$.
\end{proof}

\begin{proof}[Proof of Theorem \ref{thm419}]
For each $X^{Z_\alpha}$, since $Z_\alpha\in \mathcal{V}^{\infty}\cap \mathcal{B}^{\infty}$ by Lemma \ref{lem Z},  we can construct an approximate flow $E\left(tX^{Z_\alpha}\right)$ of $X^{Z_\alpha}$. Define $\tilde{T}_s\left(t\right):=\tilde{T}_s\circ E\left(tX^{Z_\alpha}\right)$ and $G_s\left(t\right):=\frac{\tilde{T}_s\left(t\right)-\tilde{T}_s}{t}$, it is easy to see that $G_s\left(t\right)$ satisfies the following ODE:
\begin{equation*}
G_s^{\prime}\left(t\right)=-\frac{1}{2}Ric_{\tilde{u}_s}G_s\left(t\right)-\frac{1}{2t}\left(Ric_{\tilde{u}_s\left(t\right)}-Ric_{\tilde{u}_s}\right)\tilde{T}_s(t)\text{ with }G_0\left(t\right)=0,
\end{equation*}
where $\tilde{u}_{(\cdot)}(t)$ is the stochastic parallel translation along $E\left(tX^{Z_\alpha}\right)$ and $"\prime"$ is the derivative with respect to parameter $s$.

Then denote by $G_s$ the solution to the following ODE
\begin{equation*}
G_s^{\prime}=-\frac{1}{2}Ric_{\tilde{u}_s}G_s-\frac{1}{2}\left(X^{Z_\alpha}Ric_{\tilde{u}_s}\right)\tilde{T}_s\text{ with }G_0=0
\end{equation*} 
and let $H_s\left(t\right)$ be $G_s\left(t\right)-G_s$. We know $H_s\left(t\right)$ satisfies
\begin{equation*}
H_s^{\prime}\left(t\right)=-\frac{1}{2}Ric_{\tilde{u}_s}H_s\left(t\right)-\frac{1}{2}\left(\frac{Ric_{\tilde{u}_s\left(t\right)}-Ric_{\tilde{u}_s}}{t}\tilde{T}_s\left(t\right)+\left(X^{Z_\alpha}Ric_{\tilde{u}_s}\right)\tilde{T}_s\right)\text{, }H_0\left(t\right)=0.
\end{equation*}
According to Definition \ref{pro112}, \[\tilde{T}_s\in \mathcal{D}\left(X ^{Z_\alpha}\right)\iff H_s\left(t\right)\to 0 \text{ in }L^{\infty-}\left(W_o\left(M\right)\right).\]
By Gronwall's inequality, we have
\begin{equation*}
\left|H_s\left(t\right)\right|\leq \int_{0}^{s}\left|\frac{Ric_{\tilde{u}_r\left(t\right)}-Ric_{\tilde{u}_r}}{t}\tilde{T}_r\left(t\right)+\left(X^{Z_\alpha}Ric_{\tilde{u}_r}\right)\tilde{T}_r\right|dre^\frac{d\left(N-1\right)}{2}.
\end{equation*}
Following Theorem \ref{the.2.9} we know for any $p\geq 1$, $r\in [0,1]$, 
\begin{equation}
\frac{Ric_{\tilde{u}_r\left(t\right)}-Ric_{\tilde{u}_r}}{t}\to X^{Z_\alpha}Ric_{\tilde{u}_r}\text{ as }t\to 0 \text{ in }L^{p}(W_o(M)).\label{f1}
\end{equation}
 Since $Ric$, $\nabla Ric$ are bounded and $Z_\alpha\in \mathcal{V}^\infty\cap \mathcal{B}^\infty$, Lemma \ref{lem s} shows that this convergence is uniform with respect to $r\in \left[0,1\right]$. Since $\sup_{0\leq r\leq 1}\left\Vert\tilde{T}_r\right\Vert$ is bounded, using bounded convergence theorem, we have
 \begin{equation}
 \tilde{T}_r\left(t\right)\to \tilde{T}_r\text{ in }L^{\infty-}\left(W_o\left(M\right)\right)\text{ uniformly with respect to }r\in [0,1].\label{f2}
 \end{equation}
 Combining (\ref{f1}) and (\ref{f2}) we have $H_s\left(t\right)\to 0$ in $L^{\infty-}\left(W_o\left(M\right)\right)$ as $t\to 0$. 
\end{proof}
\begin{corollary}\label{Col C}
Recall that we have defined $\tilde{C}=\left[\int_{0}^{1}\left(\tilde{T}_r^{\ast}\tilde{T}_r\right)^{-1}dr\right]^{-1}\tilde{T}_1^{-1}$ in Lemma \ref{equ.4.10-1}, then
\[\tilde{C}\in \mathcal{D}\left(X^{Z_\alpha}\right)\text{ for }1\leq \alpha\leq d.\]
\end{corollary}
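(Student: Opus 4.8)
The plan is to show that the domain $\mathcal{D}(X^{Z_\alpha})$, regarded as a collection of $L^{\infty-}$ matrix-valued functionals whose weak derivative $X^{Z_\alpha}(\cdot)$ again lies in $L^{\infty-}$, is stable under the finite list of operations out of which $\tilde{C}$ is built from $\tilde{T}_1$ and $\tilde{T}_r$: pointwise products, conjugate transposition, matrix inversion, and integration in the time variable $r$. Since Theorem \ref{thm419} already places every $\tilde{T}_s$ (and hence every $\tilde{T}_s^{\ast}$) in $\mathcal{D}(X^{Z_\alpha})$, and since Lemma \ref{lem.4.3} and Lemma \ref{lmC} supply the uniform invertibility needed below, the corollary will follow once these stability rules are in place.

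First I would record the product and inversion rules for this weak derivative. For the product rule, if $A,B\in\mathcal{D}(X^{Z_\alpha})$, then writing $A(t):=A\circ E(tX^{Z_\alpha})$ and expanding
\[
\frac{A(t)B(t)-AB}{t}=\frac{A(t)-A}{t}\,B(t)+A\,\frac{B(t)-B}{t},
\]
each term converges in $L^{\infty-}$ because $L^{\infty-}=\cap_{q\geq 1}L^{q}$ is closed under products via H\"older's inequality and $A(t)\to A$, $B(t)\to B$ in $L^{\infty-}$ (a consequence of differentiability), giving $X^{Z_\alpha}(AB)=(X^{Z_\alpha}A)B+A(X^{Z_\alpha}B)$. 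For the inversion rule, when $A$ and $A(t)$ are invertible with $\nu$-a.s.\ uniformly bounded inverses, the identity
\[
\frac{A(t)^{-1}-A^{-1}}{t}=-A(t)^{-1}\,\frac{A(t)-A}{t}\,A^{-1}
\]
shows $A^{-1}\in\mathcal{D}(X^{Z_\alpha})$ with $X^{Z_\alpha}(A^{-1})=-A^{-1}(X^{Z_\alpha}A)A^{-1}$; the required uniform bounds are exactly those furnished by Lemma \ref{lem.4.3} for $\tilde{T}_r^{-1}$. These two rules already place $\tilde{T}_1^{-1}$ and each $(\tilde{T}_r^{\ast}\tilde{T}_r)^{-1}$ in $\mathcal{D}(X^{Z_\alpha})$.

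The one genuinely delicate step is the integral $\int_{0}^{1}(\tilde{T}_r^{\ast}\tilde{T}_r)^{-1}\,dr$, where I must interchange $\frac{d}{dt}\mid_0$ with $\int_{0}^{1}dr$. The point is that the difference quotients of $(\tilde{T}_r^{\ast}\tilde{T}_r)^{-1}$ converge in $L^{\infty-}$ to $X^{Z_\alpha}(\tilde{T}_r^{\ast}\tilde{T}_r)^{-1}$ \emph{uniformly in} $r\in[0,1]$; this uniformity is precisely what Lemma \ref{lem s} and the uniform estimates already obtained in the proof of Theorem \ref{thm419} (together with Proposition \ref{prop Ric}) provide, so dominated convergence lets one pass the limit under the integral and conclude
\[
X^{Z_\alpha}\!\left(\int_{0}^{1}(\tilde{T}_r^{\ast}\tilde{T}_r)^{-1}dr\right)=\int_{0}^{1}X^{Z_\alpha}(\tilde{T}_r^{\ast}\tilde{T}_r)^{-1}\,dr\in L^{\infty-}.
\]
I expect this to be the main obstacle, since it is the only place where the $s$-uniform control of the weak derivative, rather than mere pointwise-in-$r$ differentiability, is indispensable.

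Finally I would assemble the pieces. The integral $\int_{0}^{1}(\tilde{T}_r^{\ast}\tilde{T}_r)^{-1}dr$ is symmetric positive definite with a $\nu$-a.s.\ lower bound on its spectrum (as established in Lemma \ref{lmC}), so the inversion rule applies to it as well, placing $\big[\int_{0}^{1}(\tilde{T}_r^{\ast}\tilde{T}_r)^{-1}dr\big]^{-1}$ in $\mathcal{D}(X^{Z_\alpha})$; multiplying by $\tilde{T}_1^{-1}\in\mathcal{D}(X^{Z_\alpha})$ via the product rule then gives $\tilde{C}\in\mathcal{D}(X^{Z_\alpha})$ for each $1\leq\alpha\leq d$, along with an explicit formula for $X^{Z_\alpha}\tilde{C}$ should it be needed later.
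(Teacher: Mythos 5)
Your proposal is correct and follows essentially the same route as the paper's own proof: boundedness from Lemma \ref{lmC}, the product and inversion rules combined with Theorem \ref{thm419} to handle $\tilde{T}_1^{-1}$ and $\left(\tilde{T}_r^{\ast}\tilde{T}_r\right)^{-1}$, then differentiating through the $r$-integral and inverting once more. The only difference is one of detail: the paper asserts $\int_{0}^{1}\left(\tilde{T}_r^{\ast}\tilde{T}_r\right)^{-1}dr\in\mathcal{D}\left(X^{Z_\alpha}\right)$ without comment, whereas you explicitly justify the interchange of $\frac{d}{dt}\mid_0$ with the integral via the $r$-uniform convergence of the difference quotients supplied by Lemma \ref{lem s} and the uniform bounds of Lemma \ref{lem.4.3}.
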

\begin{proof}
	Lemma \ref{lmC} shows that $\tilde{C}\in L^{\infty-}\left(W_o(M)\right)$. 
By the product rule and Theorem \ref{thm419}, for any $s\in \left[0,1\right]$, \[X^{Z_\alpha}\left(\tilde{T}_s^{-1}\right)=-\tilde{T}_s\left(X^{Z_\alpha}\tilde{T}_s\right)\tilde{T}_s\in L^{\infty-}\left(W_o\left(M\right)\right),\]so $\tilde{T}_s^{-1}\in \mathcal{D}\left(X^{Z_\alpha}\right)$ and thus $\int_{0}^{1}\left(\tilde{T}_r^{\ast}\tilde{T}_r\right)^{-1}dr\in \mathcal{D}\left(X^{Z_\alpha}\right)$. Then apply the product rule again we get $\tilde{C}\in \mathcal{D}\left(X^{Z_{\alpha}}\right)$.
\end{proof}
\begin{lemma} \label{lem.5.1}Given $X\in\Gamma\left(TM\right)$ with compact support, if $\tilde{X}$ is its orthogonal lift on $W_o\left(M\right)$, then define an operator on $L^2\left(W_o(M),\nu\right)$ by
\begin{align*}
\tilde{X}^{\operatorname{tr},\nu}= & -\tilde{X}+\sum_{\alpha=1}^{d}\left\langle \tilde{C}\tilde{H},e_{\alpha}\right\rangle \int_{0}^{1}\left\langle \left(\tilde{T}_s^{-1}\right)^{\ast}e_{\alpha},d\beta_{s}\right\rangle+\sum_{\alpha=1}^{d}\left\langle -X^{Z_{\alpha}}\left(\tilde{C}\tilde{H}\right),e_{\alpha}\right\rangle
\end{align*}
with $\mathcal{D}(\tilde{X}^{\operatorname{tr},\nu}):=\mathcal{D}(\tilde{X})$, then for any $f,g\in \mathcal{D}(\tilde{X})$, we have 
\[
\mathbb{E}_\nu\left[\tilde{X}f\cdot g\right]=\mathbb{E}_\nu\left[f\cdot\tilde{X}^{tr,\nu}g\right].
\] \end{lemma}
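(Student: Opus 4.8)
The plan is to reduce the assertion to the adapted integration by parts formula of Theorem \ref{the.2.9}, applied separately to each of the adapted vector fields $X^{Z_\alpha}$, and then to absorb the non-adapted scalar coefficients $c_\alpha:=\left\langle \tilde{C}\tilde{H},e_\alpha\right\rangle$ by a Leibniz rule. Starting from Definition \ref{equ.4.10-1}, I would first write
\[
\mathbb{E}_\nu\left[\tilde{X}f\cdot g\right]=\sum_{\alpha=1}^{d}\mathbb{E}_\nu\left[\left(X^{Z_\alpha}f\right)\,c_\alpha g\right],
\]
and then apply Theorem \ref{the.2.9} to transfer $X^{Z_\alpha}$ off $f$ onto the test function $c_\alpha g$, obtaining $\mathbb{E}_\nu\left[f\cdot\left(X^{Z_\alpha}\right)^{tr,\nu}(c_\alpha g)\right]$ with $\left(X^{Z_\alpha}\right)^{tr,\nu}(c_\alpha g)=-X^{Z_\alpha}(c_\alpha g)+(c_\alpha g)\,\delta_\alpha$, where $\delta_\alpha$ denotes the It\^o divergence of $X^{Z_\alpha}$ coming from Theorem \ref{the.2.9}.

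The decisive algebraic step is the product rule $X^{Z_\alpha}(c_\alpha g)=(X^{Z_\alpha}c_\alpha)\,g+c_\alpha\,X^{Z_\alpha}g$, legitimate because $X^{Z_\alpha}$ is a first order differential operator and both $c_\alpha$ and $g$ lie in $\mathcal{D}(X^{Z_\alpha})$. Summing over $\alpha$, the piece $-\sum_\alpha c_\alpha X^{Z_\alpha}g$ reassembles into $-\tilde{X}g$; the piece $-\sum_\alpha(X^{Z_\alpha}c_\alpha)\,g=\sum_\alpha\left\langle -X^{Z_\alpha}(\tilde{C}\tilde{H}),e_\alpha\right\rangle g$ is exactly the zeroth order summand of $\tilde{X}^{tr,\nu}$; and $\sum_\alpha c_\alpha\,\delta_\alpha\,g$ is the stochastic-integral summand. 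To identify $\delta_\alpha$ I would use that $Z_\alpha$ is adapted with vanishing martingale kernel (Lemma \ref{lem Z}) and satisfies $Z_\alpha'+\frac12 Ric_{\tilde{u}_s}Z_\alpha=(\tilde{T}_s^{-1})^{*}e_\alpha$; feeding this defining ODE into the divergence of Theorem \ref{the.2.9} collapses it to $\delta_\alpha=\int_0^1\left\langle (\tilde{T}_s^{-1})^{*}e_\alpha,d\beta_s\right\rangle$, the clean second summand of the Lemma. Collecting the three contributions yields precisely $\mathbb{E}_\nu[f\cdot\tilde{X}^{tr,\nu}g]$.

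The non-routine work lies in justifying that each manipulation is legitimate, and this is where the standing hypotheses enter. That $c_\alpha\in\mathcal{D}(X^{Z_\alpha})$ follows from Corollary \ref{Col C} (namely $\tilde{C}\in\mathcal{D}(X^{Z_\alpha})$, which rests on Theorem \ref{thm419} and Proposition \ref{prop Ric}, hence on boundedness of $\nabla R$) together with $\tilde{H}=\tilde{u}_1^{-1}X\circ\Ep\in\mathcal{GFC}^1\subset\mathcal{D}(X^{Z_\alpha})$, where compact support of $X$ makes $\tilde{H}$ and its derivatives bounded; the Leibniz rule then gives $c_\alpha g\in\mathcal{D}(X^{Z_\alpha})$, as required to apply Theorem \ref{the.2.9}. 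The compact support of $X$, Lemma \ref{lmC}, Corollary \ref{Col C} and the $L^{\infty-}$ bounds of Proposition \ref{prop Ric} further ensure that $c_\alpha$, $X^{Z_\alpha}c_\alpha$ and (via Burkholder--Davis--Gundy) $\delta_\alpha$ all lie in $L^{\infty-}\left(W_o(M),\nu\right)$, so every product is integrable and Fubini and H\"older apply when collecting terms.

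The main obstacle, and the reason for the indirect route through the adapted basis, is the non-adaptedness of the coefficients $c_\alpha$: since $Z_\Phi$ is not adapted, Theorem \ref{the.2.9} cannot be applied to $X^{Z_\Phi}=\tilde{X}$ directly, and $c_\alpha$ cannot be pulled inside the It\^o integral $\int_0^1\left\langle(\tilde{T}_s^{-1})^{*}e_\alpha,d\beta_s\right\rangle$, which is why the extra term $\sum_\alpha\left\langle -X^{Z_\alpha}(\tilde{C}\tilde{H}),e_\alpha\right\rangle$ must survive. A secondary technical point I would have to settle is that Theorem \ref{the.2.9} is stated with the differentiated argument in $\mathcal{GFC}^1$, whereas here $f$ ranges over $\mathcal{D}(\tilde{X})$; I would close this gap either by invoking the integration by parts identity of Theorem \ref{the.2.9} on the full domain $\mathcal{D}(X^{Z_\alpha})$ or by approximating $f$ by generalized cylinder functions and passing to the limit using the $L^{\infty-}$ bounds just described.
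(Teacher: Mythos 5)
Your proposal is correct and takes essentially the same route as the paper's own proof: expand $\tilde{X}f=\sum_{\alpha=1}^{d}\left\langle \tilde{C}\tilde{H},e_{\alpha}\right\rangle X^{Z_{\alpha}}f$, apply the adapted integration by parts of Theorem \ref{the.2.9} to each $X^{Z_{\alpha}}$ with test function $\left\langle \tilde{C}\tilde{H},e_{\alpha}\right\rangle g$, split by the product rule into the three terms $-\tilde{X}g$, the stochastic-integral term, and $\sum_{\alpha}\left\langle -X^{Z_{\alpha}}\left(\tilde{C}\tilde{H}\right),e_{\alpha}\right\rangle g$, and secure integrability via Corollary \ref{Col C}, Lemma \ref{lmC} and Burkholder--Davis--Gundy. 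If anything, you are more explicit than the paper on two points it leaves implicit: the identification of the divergence as $\int_{0}^{1}\left\langle \left(\tilde{T}_s^{-1}\right)^{\ast}e_{\alpha},d\beta_{s}\right\rangle$ through the defining ODE $Z_{\alpha}^{\prime}+\frac{1}{2}Ric_{\tilde{u}_s}Z_{\alpha}=\left(\tilde{T}_s^{-1}\right)^{\ast}e_{\alpha}$, and the verification that $\left\langle \tilde{C}\tilde{H},e_{\alpha}\right\rangle g\in\mathcal{D}\left(X^{Z_{\alpha}}\right)$ so that Theorem \ref{the.2.9} applies with $f$ ranging over all of $\mathcal{D}(\tilde{X})$.
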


\begin{proof} 
Since $\tilde{H}\in \mathcal{GFC}^{1}$, $\tilde{H}\in \mathcal{D}\left(X^{Z_\alpha}\right)\text{  }\forall 1\leq \alpha\leq d$. Based on this observation and Corollary \ref{Col C}, we obtain
\begin{align}
\mathbb{E}\left[\tilde{X}f\cdot g\right]& =\mathbb{E}\left[\sum_{\text{\ensuremath{\alpha=1}}}^{d}\left\langle \tilde{C}\tilde{H},e_{\alpha}\right\rangle X^{Z_{\alpha}}f\cdot g\right]=\sum_{\alpha=1}^{d}\mathbb{E}\left[X^{Z_{\alpha}}f\cdot\left(g\cdot\left\langle \tilde{C}\tilde{H},e_{\alpha}\right\rangle \right)\right]=I+II+III\label{Eq:-10}
\end{align}
where 
\begin{align*}
& I=\mathbb{E}\left[f\cdot\left(-\tilde{X}\right)g\right]\\
& II=\mathbb{E}\left[f\cdot g\cdot\sum_{\alpha=1}^{d}\left\langle \tilde{C}\tilde{H},e_{\alpha}\right\rangle \int_{0}^{1}\left\langle \left(\tilde{T}_s^{-1}\right)^{\ast}e_{\alpha},d\beta_{s}\right\rangle \right]\\
& III=\mathbb{E}\left[f\cdot g\cdot\sum_{\alpha=1}^{d}\left\langle -X^{Z_{\alpha}}\left(\tilde{C}\tilde{H}\right),e_{\alpha}\right\rangle \right].
\end{align*}
Since $f\in L^{\infty-}\left(W_o(M),\nu\right)$, the proof can be completed by showing $\tilde{X}^{tr,\nu}g\in L^{\infty-}\left(W_o(M),\nu\right)$.
Corollary \ref{Col C} and the fact that $\tilde{H}\in \mathcal{D}(X^{Z_\alpha})$ implies that $\tilde{C}\tilde{H}, X^{Z_\alpha}\left(\tilde{C}\tilde{H}\right)\in
L^{\infty-}\left(W_o(M),\nu\right)$, so it suffices to show $\int_{0}^{1}\left\langle \left(\tilde{T}_s^{-1}\right)^{\ast}e_{\alpha},d\beta_{s}\right\rangle\in L^{\infty-}\left(W_o(M),\nu\right)$. The fact that it is true is a result of the boundedness of  $\underset{0\leq s\leq 1}{\sup}\left\Vert\tilde{T}_s^{-1}\right\Vert$  and Burkholder-Davis-Gundy inequality. 

\end{proof}

The following lemma gives a more
explicit expression of the last term in $\tilde{X}^{tr,\nu}$
\[
\sum_{\alpha=1}^{d}\left\langle -X^{Z_{\alpha}}\left(\tilde{C}\tilde{H}\right),e_{\alpha}\right\rangle 
\]
under an extra condition that $\nabla R\equiv 0$. The new expression indicates a structure of the divergence term $\tilde{X}^{tr,\nu}$ that is analogous to finite dimensional Riemannian geometry. Interested reader may refer to the structure theory on Appendix \ref{app.A}.
\begin{lemma} \label{lem.5.2}If further curvature tensor is parallel, i.e. $\nabla R\equiv 0$, then
\begin{align}
- & \sum_{\alpha=1}^{d}\left\langle X^{Z_{\alpha}}\left(\tilde{C}\tilde{H}\right),e_{\alpha}\right\rangle =divX\circ\Ep-\sum_{\alpha=1}^{d}\left\langle \tilde{C}A_1\left\langle Z_{\alpha}\right\rangle \tilde{H},e_{\alpha}\right\rangle .\label{equ.5.1}
\end{align}

\end{lemma}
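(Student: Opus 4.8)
The plan is to differentiate the product $\tilde C\tilde H$ with the derivation $X^{Z_\alpha}$, to show that the hypothesis $\nabla R\equiv 0$ annihilates the factor $\tilde C$, and then to read off the two terms on the right-hand side from the vertical and horizontal parts of $X^{Z_\alpha}\tilde H$ supplied by Theorem \ref{the.2.9}. First I would record the structural consequence of $\nabla R\equiv 0$. Since the Ricci tensor is a contraction of $R$ and covariant differentiation commutes with contraction, $\nabla Ric\equiv 0$; hence the parallel-transport frame preserves $Ric$, and $Ric_{\tilde u_s}=\tilde u_s^{-1}Ric_{\Sigma_s}\tilde u_s=Ric_{u_0}$ is a constant matrix, the same for $\nu$-a.e. path and every $s\in[0,1]$. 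Consequently $\tilde T_s$ (Definition \ref{def.T}), and therefore $\tilde{\mathbf K}_s$ and $\tilde C=\big[\int_0^1(\tilde T_r^{\ast}\tilde T_r)^{-1}dr\big]^{-1}\tilde T_1^{-1}$ (Lemma \ref{equ.4.10-1}), are deterministic, so that $X^{Z_\alpha}\tilde C=0$ for every $\alpha$. By the product rule this collapses $X^{Z_\alpha}(\tilde C\tilde H)$ to $\tilde C\,X^{Z_\alpha}\tilde H$, which is the step where the whole simplification lives.

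Next I would compute $X^{Z_\alpha}\tilde H$. Writing $\tilde H=F(\tilde u_1)$ with $F(u)=u^{-1}X(\pi(u))$ and applying Theorem \ref{the.2.9} componentwise (exactly as is done for the matrix-valued $Ric_{\tilde u_s}$ in Proposition \ref{prop Ric}, using $Z_\alpha\in\mathcal V^{\infty}\cap\mathcal B^{\infty}$ from Lemma \ref{lem Z}), the vertical variation contributes the rotation $A_1\langle Z_\alpha\rangle\tilde H$ and the horizontal variation contributes the covariant derivative $\tilde u_1^{-1}\nabla_{X^{Z_\alpha}_1}X$, where $X^{Z_\alpha}_1=\tilde u_1 Z_\alpha(1)$; that is,
\[
X^{Z_\alpha}\tilde H=A_1\langle Z_\alpha\rangle\tilde H+\tilde u_1^{-1}\nabla_{X^{Z_\alpha}_1}X .
\]
Inserting $\tilde C$ and summing, the first piece produces precisely $-\sum_\alpha\langle\tilde C A_1\langle Z_\alpha\rangle\tilde H,e_\alpha\rangle$, the curvature correction on the right-hand side.

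For the second piece I would use the algebraic identity that makes the argument work: since variation of parameters for the equation defining $Z_\alpha$ gives $Z_\alpha(1)=\tilde T_1\big[\int_0^1(\tilde T_r^{\ast}\tilde T_r)^{-1}dr\big]e_\alpha$, one has $\tilde C Z_\alpha(1)=e_\alpha$, whence, by cyclicity of the trace,
\[
\sum_{\alpha=1}^d\Big\langle \tilde C\,\tilde u_1^{-1}\nabla_{X^{Z_\alpha}_1}X,\,e_\alpha\Big\rangle=\operatorname{tr}\big(\tilde u_1^{-1}(\nabla X)\tilde u_1\big)=\operatorname{div}X\circ\Ep ,
\]
where $(\nabla X)$ denotes the endomorphism $v\mapsto\nabla_v X$ at $\Sigma_1$. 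Collecting the two contributions with the overall minus sign yields the claim; the sign bookkeeping between the horizontal term and the paper's convention for $\operatorname{div}X$ must be carried through carefully at this point.

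The main obstacle will be the careful bookkeeping in the two preceding steps rather than any single hard estimate. Concretely, I expect the delicate points to be (i) the correct extraction from Theorem \ref{the.2.9} of both the vertical rotation and the horizontal covariant-derivative pieces of the $TM$-valued functional $\tilde H$, keeping the signs straight, and (ii) a clean justification that $\nabla R\equiv 0$ forces $\tilde C$ to be constant, which is the only place the hypothesis enters. It is worth noting as a consistency check that the commutator term $[A_s\langle Z_\alpha\rangle,Ric_{\tilde u_s}]$ occurring in the general formula (\ref{Ric1}) vanishes here, because a parallel Ricci tensor commutes with every curvature operator $R_{\tilde u_r}(\cdot,\cdot)$ (Ambrose--Singer), in agreement with $X^{Z_\alpha}Ric_{\tilde u_s}=0$. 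The engine of the whole computation is the contraction identity $\tilde C Z_\alpha(1)=e_\alpha$, which converts the horizontal derivative of $X$ into exactly the divergence at the endpoint.
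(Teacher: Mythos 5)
Your proposal is correct and follows essentially the same route as the paper's own proof: you note that $\nabla R\equiv 0$ forces $\nabla Ric\equiv 0$, hence $Ric_{\tilde{u}_s}$, $\tilde{T}_s$ and $\tilde{C}$ are deterministic so the product rule leaves $\tilde{C}\,X^{Z_\alpha}\tilde{H}$; you split $X^{Z_\alpha}\tilde{H}$ via Theorem \ref{the.2.9} into the rotation piece $A_1\left\langle Z_\alpha\right\rangle\tilde{H}$ and the horizontal piece $\tilde{u}_1^{-1}\nabla_{X^{Z_\alpha}(1)}X$; and your contraction identity $\tilde{C}Z_\alpha(1)=e_\alpha$ combined with similarity invariance of the trace is exactly the paper's computation with $A=\tilde{u}_1\tilde{C}^{-1}\tilde{u}_1^{-1}$ and the tensor identity $\sum_{\alpha}Af_\alpha\otimes\left(A^{-1}\right)^{\ast}f_\alpha=\sum_{\alpha}f_\alpha\otimes f_\alpha$, merely phrased more directly. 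The single point you flagged---the sign relating the horizontal term to $\operatorname{div}X\circ\Ep$---is precisely where the paper's proof attaches an unexplained minus sign in its term $I=-\sum_{\alpha}\left\langle\tilde{C}\tilde{u}_1^{-1}\nabla_{X^{Z_\alpha}(1)}X,e_\alpha\right\rangle$, so your bookkeeping caution is warranted and is shared by, rather than resolved in, the source.
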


\begin{proof} Since for tensors, contraction commutes with covariant differentiation, and $Ric$ is the contraction of curvature tensor $R$, so $\nabla Ric\equiv 0$ and thus $\delta Ric_{\tilde{u}_s}=\nabla_{\delta\beta_s}Ric\equiv 0$. So $Ric_{\tilde{u}_s}=Ric_{\tilde{u}_0}$ a.s. and it follows that $\tilde{T}_s$ and $\tilde{C}$ have deterministic versions.
	
	Since $\tilde{H}=\tilde{u}_1^{-1}X\left(\pi\circ \tilde{u}_1\right)\in\mathcal{GFC}^{1}$, we can apply Theorem \ref{the.2.9} to $\tilde{H}$ to find
\begin{align*}
\sum_{\alpha=1}^{d}\left\langle X^{Z_{\alpha}}\left(\tilde{C}\tilde{H}\right),e_{\alpha}\right\rangle  & =\sum_{\alpha=1}^{d}\left\langle \tilde{C}X^{Z_{\alpha}}\tilde{H},e_{\alpha}\right\rangle=I+II
\end{align*}
where 
\[
I=-\sum_{\alpha=1}^{d}\left\langle \tilde{C}\tilde{u}_1^{-1}\nabla_{X^{Z_{\alpha}}\left(1\right)}X,e_{\alpha}\right\rangle\text{ and  }\text{  } II=\sum_{\alpha=1}^{d}\left\langle \tilde{C}A_1\left\langle Z_{\alpha}\right\rangle \tilde{H},e_{\alpha}\right\rangle .
\]
\textbf{Claim: }$I=-divX\circ E_{1}.$
\[\]
\textbf{Proof of Claim:} 
\begin{align*}
I & =-\sum_{\alpha=1}^{d}\left\langle \tilde{u}_1\tilde{C}\tilde{u}_1^{-1}\nabla_{\tilde{u}_1\tilde{C}^{-1}\tilde{u}_1^{-1}\tilde{u}_1e_{\alpha}}X,\tilde{u}_1e_{\alpha}\right\rangle=-\sum_{\alpha=1}^{d}\left\langle A^{-1}\nabla_{Af_{\alpha}}X,f_{\alpha}\right\rangle =-\sum_{\alpha=1}^{d}\left\langle \nabla_{Af_{\alpha}}X,\left(A^{-1}\right)^{\ast}f_{\alpha}\right\rangle 
\end{align*}
where $A=\tilde{u}_1\tilde{C}^{-1}\tilde{u}_1^{-1}\in End\left(T_{\Ep\left(\sigma\right)}M\right)$
and $\left\{ f_{\alpha}\right\} =\left\{ \tilde{u}_1e_{\alpha}\right\} $
is an orthonormal basis of $T_{\Ep\left(\sigma\right)}M$. Since $\left\langle \nabla_{\cdot}X,\cdot\right\rangle $ is
bilinear on $T_{\Ep\left(\sigma\right)}M$, by the universal property
of tensor product we know there exists a linear map $l:T_{\Ep\left(\sigma\right)}M\otimes T_{\Ep\left(\sigma\right)}M\mapsto\mathbb{R}$
such that 
\[
\left\langle \nabla_{Af_{\alpha}}X,\left(A^{-1}\right)^{\ast}f_{\alpha}\right\rangle =l\left(Af_{\alpha}\otimes\left(A^{-1}\right)^{\ast}f_{\alpha}\right)
\]
and therefore: 
\begin{equation}
\sum_{\alpha=1}^{d}\left\langle \nabla_{Af_{\alpha}}X,\left(A^{-1}\right)^{\ast}f_{\alpha}\right\rangle =l\left(\sum_{\alpha=1}^{d}Af_{\alpha}\otimes\left(A^{-1}\right)^{\ast}f_{\alpha}\right).\label{equ.5.2}
\end{equation}
Using the isomorphism between $T^{1,1}\left(V\right)\mapsto End\left(V\right):$$\left(a\otimes b\right)v=a\cdot\left\langle b,v\right\rangle $
one can easily see: 
\begin{equation}
\sum_{\alpha=1}^{d}Af_{\alpha}\otimes\left(A^{-1}\right)^{\ast}f_{\alpha}=\sum_{\alpha=1}^{d}f_{\alpha}\otimes f_{\alpha}.\label{equ.5.3}
\end{equation}
Combining $\left(\ref{equ.5.2} \right)$ and $\left(\ref{equ.5.3}\right)$ we have
\[
I=-\sum_{\alpha=1}^{d}\left\langle \nabla_{f_{\alpha}}X,f_{\alpha}\right\rangle =-divX\circ \Ep
\]
and thus $\left(\ref{equ.5.1}\right)$. \end{proof}

\appendix

\section{ODE estimates}\label{App.B}
\begin{lemma}\label{Lem1}
	Let $\alpha\left(t\right)\in\operatorname*{End}\left(\mathbb{R}^{d}\right)$ be a continuously varying matrix valued function and $S(t)\in \operatorname*{End}\left(\mathbb{R}^{d}\right)$ be the solution to the following initial value problem:
	\[\frac{d}{dt}S(t)=\alpha(t)S(t),\text{ }S(0)=I,\]
	then for any $t\in [0,1]$, $S(t)\in \operatorname*{Aut}\left(\mathbb{R}^{d}\right)$. Furthermore,
	\[\int_{0}^{t}[S(r)^*S(r)]^{-1}dr\in \operatorname*{Aut}\left(\mathbb{R}^{d}\right)\text{ }\forall t\in [0,1].\]
\end{lemma}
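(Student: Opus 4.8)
The plan is to treat the two assertions separately, first establishing invertibility of $S(t)$ and then exploiting positive-definiteness to handle the integral.

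First I would show $S(t)\in\operatorname*{Aut}\left(\mathbb{R}^{d}\right)$ for every $t\in[0,1]$. Since $\alpha$ is continuous on $[0,1]$, the linear initial value problem defining $S$ has a unique solution on all of $[0,1]$ (global existence is automatic for linear systems with continuous coefficients, so no local Lipschitz/Picard iteration is needed). The companion problem $\frac{d}{dt}U(t)=-U(t)\alpha(t)$ with $U(0)=I$ likewise has a unique solution $U$ on $[0,1]$. Computing the derivative of the product,
\[
\frac{d}{dt}\left[U(t)S(t)\right]=U'(t)S(t)+U(t)S'(t)=-U(t)\alpha(t)S(t)+U(t)\alpha(t)S(t)=0,
\]
and using $U(0)S(0)=I$, I conclude $U(t)S(t)=I$ for all $t$, so $S(t)$ is invertible with $S(t)^{-1}=U(t)$. (Equivalently, Liouville's formula $\det S(t)=\exp\left(\int_{0}^{t}\operatorname{tr}\alpha(r)\,dr\right)\neq 0$ yields the same conclusion.)

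Next, since each $S(r)$ is invertible, the matrix $S(r)^{\ast}S(r)$ is symmetric and positive definite: for $v\neq 0$ we have $\left\langle S(r)^{\ast}S(r)v,v\right\rangle=\left\vert S(r)v\right\vert^{2}>0$. Hence its inverse $\left[S(r)^{\ast}S(r)\right]^{-1}$ is also symmetric positive definite, and the map $r\mapsto\left[S(r)^{\ast}S(r)\right]^{-1}$ is continuous on $[0,1]$ because $S$ is $C^{1}$ and matrix inversion is continuous on the invertible matrices. Therefore, for any fixed $t\in(0,1]$ and any $v\neq 0$,
\[
\left\langle\left(\int_{0}^{t}\left[S(r)^{\ast}S(r)\right]^{-1}dr\right)v,v\right\rangle=\int_{0}^{t}\left\langle\left[S(r)^{\ast}S(r)\right]^{-1}v,v\right\rangle dr>0,
\]
the integral of a strictly positive continuous function over an interval of positive length being strictly positive. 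Thus $\int_{0}^{t}\left[S(r)^{\ast}S(r)\right]^{-1}dr$ is positive definite, hence in $\operatorname*{Aut}\left(\mathbb{R}^{d}\right)$.

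I expect no serious obstacle; the only points requiring care are the global existence and uniqueness of $S$ and $U$ on the whole interval, which rest on linearity plus continuity of $\alpha$ rather than on a Picard argument, and the observation that it is strict positivity of the integrand (not mere non-negativity) that forces invertibility. I would also remark that at $t=0$ the integral is the zero operator, so the second assertion is to be read for $t\in(0,1]$; this suffices for the applications in Theorem \ref{th.4.1.3} and Theorem \ref{the.3.4}, where only $t=1$ is used.
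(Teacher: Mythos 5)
Your proof is correct and follows essentially the same route as the paper: the paper also introduces the companion solution $U$ of $\frac{d}{dt}U(t)=-U(t)\alpha(t)$, $U(0)=I$, deduces $S(t)^{-1}=U(t)$ by a uniqueness-of-ODE argument (differentiating $S(t)U(t)$ rather than $U(t)S(t)$, an immaterial difference in finite dimensions), and then proves invertibility of the integral by showing the quadratic form $\left\langle\int_{0}^{t}\left[S(r)^{\ast}S(r)\right]^{-1}dr\,v,v\right\rangle=\int_{0}^{t}\left\Vert\left[S(r)^{\ast}\right]^{-1}v\right\Vert^{2}dr$ is strictly positive. Your parenthetical Liouville-formula alternative and your remark that the second assertion must be read for $t\in(0,1]$ (the integral vanishes at $t=0$) are minor refinements on the paper's argument, the latter being a caveat the paper's statement silently overlooks.
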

\begin{proof}
	Denote by $U(t)\in \operatorname*{End}\left(\mathbb{R}^{d}\right)$ the solution to the following initial value problem:
	\[\frac{d}{dt}U(t)=-U(t)\alpha(t),\text{ }U(0)=I,\]
	then direct computation shows that $Y(t):=S(t)U(t)\in \operatorname*{End}\left(\mathbb{R}^{d}\right)$ satisfies 
	\[\frac{d}{dt}Y(t)=\alpha(t)Y(t)-Y(t)\alpha(t),\text{ }Y(0)=I.\]
	By the uniqueness of solutions for linear ODE, we get $S(t)U(t)\equiv I$,  
	and this shows that $U{(t)}$ is a left inverse to $S{(t)}$.
	As we are in finite dimensions it follows that $T{(t)}^{-1}$
	exists and is equal to $U{(t)}.$
	Then for any $v\in \mathbb{C}^d/\left\{0\right\}$, 
	\begin{align}
	\left<\int_{0}^{t}[S(r)^*S(r)]^{-1}drv,v\right>&=\int_{0}^{t}\left<[S(r)^*]^{-1}v,[S(r)^*]^{-1}v\right>dr=\int_{0}^{t}\left\Vert U(r)v\right\Vert^2dr
	\end{align} 
	Since $U(0)=I$ and $U(\cdot):[0,1]\to \operatorname*{Aut}\left(\mathbb{R}^{d}\right)$ is continuous, 
	\[\left<\int_{0}^{t}[S(r)^*S(r)]^{-1}drv,v\right>>0\]
	and this implies $\int_{0}^{t}[S(r)^*S(r)]^{-1}dr\in \operatorname*{Aut}\left(\mathbb{R}^{d}\right)\forall t\in [0,1].$
\end{proof}
\section{A Structure Theorem for $\operatorname*{div}\nolimits _{g}\left(\tilde{X}\right)$\label{app.A}}

This section is devoted to a structure theorem for  $\operatorname*{div}\nolimits _{g}\left(\tilde{X}\right)$---the divergence of the lifted vector field $\tilde{X}$ in finite dimensional Riemannian geometry. We expect that the orthogonal lift that we introduced in this paper also has an analogous structure, as is hinted in Lemma \ref{lem.5.2}.

Let $\pi:\left(M,g\right)\rightarrow\left(N,h\right)$ be a submersion
of two smooth Riemannian manifolds. To each $m\in M$ and $v\in T_{\pi\left(m\right)}N,$
let $\hat{v}:=\pi_{\ast m}^{\operatorname{tr}}\left(\pi_{\ast m}\pi_{\ast m}^{\operatorname{tr}}\right)^{-1}v\in T_{m}M$
so that $\hat{v}$ is the unique shortest vector in $T_{m}M$ such
that $\pi_{\ast m}\hat{v}=v.$ So if $X\in\Gamma\left(TN\right)$
is a vector field on $N,$ then $\hat{X}\in\Gamma\left(TM\right)$
is defined by $\hat{X}\left(m\right)=\pi_{\ast m}^{\operatorname{tr}}\left(\pi_{\ast m}\pi_{\ast m}^{\operatorname{tr}}\right)^{-1}X\left(\pi\left(m\right)\right)$
and we have $\pi_{\ast}\hat{X}=X\circ\pi.$ Finally, let $\operatorname*{Vol}_{g}$
and $\operatorname*{Vol}_{h}$ be the volume forms on $\left(M,g\right)$
and $\left(N,h\right)$ respectively.

\begin{lemma} \label{lem.A.1}If $K:=\dim M>k:=\dim N,$ then there
	exists a unique $K-k$ -- form $\left(\gamma\right)$ on $M$ such
	that; 
	\begin{enumerate}
		\item $\operatorname*{Vol}_{g}=\left(\pi^{\ast}\operatorname*{Vol}_{h}\right)\wedge\gamma$ 
		\item $i_{\hat{v}}\gamma=0$ for any $v\in T_{\pi\left(m\right)}N$ and
		$m\in M.$ 
	\end{enumerate}
\end{lemma}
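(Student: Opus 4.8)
The plan is to construct $\gamma$ explicitly via a pointwise linear-algebra construction at each $m \in M$, then verify the two stated properties and conclude uniqueness from them. First I would fix $m \in M$ and decompose $T_m M = \mathcal{V}_m \oplus \mathcal{H}_m$, where $\mathcal{V}_m := \ker \pi_{\ast m}$ is the vertical subspace (of dimension $K-k$) and $\mathcal{H}_m := \mathcal{V}_m^{\perp}$ is the horizontal subspace (of dimension $k$). The key observation is that $v \mapsto \hat v$ is exactly the inverse isometry identifying $T_{\pi(m)}N$ with $\mathcal{H}_m$; in particular $\{\hat v : v \in T_{\pi(m)}N\} = \mathcal{H}_m$, so condition (2) is the requirement that $\gamma$ annihilate every horizontal vector, i.e. $i_w \gamma = 0$ for all $w \in \mathcal{H}_m$. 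This forces $\gamma_m$ to be (a multiple of) the volume form of the vertical subspace $\mathcal{V}_m$ with its induced metric.

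Concretely, I would define $\gamma_m$ to be $\operatorname*{Vol}_{\mathcal{V}_m}$, the Riemannian volume form of $(\mathcal{V}_m, g|_{\mathcal{V}_m})$, extended to all of $T_m M$ by declaring it to vanish whenever any argument is horizontal. Smoothness in $m$ follows since the splitting $T_m M = \mathcal{V}_m \oplus \mathcal{H}_m$ varies smoothly (being determined by the smooth tensor $\pi_{\ast}$) and the volume form of the vertical distribution is built algebraically from $g$ and this splitting. To verify property (1), I would pick, at each $m$, a horizontal orthonormal frame $e_1,\dots,e_k$ together with a vertical orthonormal frame $f_1,\dots,f_{K-k}$; then $\operatorname*{Vol}_g = e^1 \wedge \cdots \wedge e^k \wedge f^1 \wedge \cdots \wedge f^{K-k}$, where $\{e^i,f^j\}$ is the dual coframe. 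Since $\pi_{\ast m}$ maps $\mathcal{H}_m$ isometrically onto $T_{\pi(m)}N$, the pullback $\pi^{\ast}\operatorname*{Vol}_h$ agrees with $\pm e^1 \wedge \cdots \wedge e^k$ on horizontal vectors and annihilates verticals, so $(\pi^{\ast}\operatorname*{Vol}_h) \wedge \gamma = \pm e^1 \wedge \cdots \wedge e^k \wedge f^1 \wedge \cdots \wedge f^{K-k} = \pm \operatorname*{Vol}_g$; orienting $\gamma$ consistently (choosing the vertical orientation so the signs match) gives the equality exactly.

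For uniqueness, suppose $\gamma, \gamma'$ both satisfy (1) and (2). Condition (2) says both restrict to zero on the horizontal subspace, hence at each $m$ both are multiples of the top vertical form: $\gamma_m = c(m)\, f^1 \wedge \cdots \wedge f^{K-k}$ and similarly for $\gamma'$. Plugging into condition (1) and evaluating against the frame $e_1,\dots,e_k,f_1,\dots,f_{K-k}$ pins down the scalar: $(\pi^\ast \operatorname*{Vol}_h \wedge \gamma)(e_1,\dots,f_{K-k}) = \operatorname*{Vol}_g(\dots) = 1$ forces $c(m)$ uniquely, so $\gamma = \gamma'$.

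I expect the main obstacle to be purely bookkeeping rather than conceptual: getting the wedge-product sign conventions and the orientation of $\gamma$ to line up so that property (1) holds as a genuine equality (not merely up to sign) across all of $M$, and confirming that the chosen vertical orientation can be made globally consistent and smooth. The interior-product identity $i_{\hat v}\gamma = 0$ is immediate once one recognizes $\hat v$ as horizontal, so the only real care is in the signs appearing when one expands $(\pi^\ast \operatorname*{Vol}_h)\wedge\gamma$ relative to an adapted orthonormal coframe.
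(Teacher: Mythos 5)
Your overall architecture (the vertical--horizontal splitting $T_mM=\mathcal{V}_m\oplus\mathcal{H}_m$, a $\gamma$ supported on $\Lambda^{K-k}\mathcal{V}_m^{\ast}$, and uniqueness by evaluating against an adapted frame) is sound and close in spirit to the paper's proof, which defines $\gamma:=\operatorname*{Vol}_{g}\left(\hat{e}_1,\dots,\hat{e}_k,\cdot,\dots,\cdot\right)$ for a local frame $\left\{e_i\right\}$ on $N$ normalized by $\operatorname*{Vol}_h\left(e_1,\dots,e_k\right)=1$. But your existence step contains a genuine error: you assert that $v\mapsto\hat{v}$ is an isometry, equivalently that $\pi_{\ast m}$ maps $\mathcal{H}_m$ isometrically onto $T_{\pi\left(m\right)}N$, and conclude that $\pi^{\ast}\operatorname*{Vol}_h$ agrees with $\pm e^{1}\wedge\cdots\wedge e^{k}$ on horizontal vectors. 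That holds only when $\pi$ is a \emph{Riemannian} submersion, which is not among the hypotheses: the lemma assumes only a submersion between two Riemannian manifolds, with no compatibility between $g$ and $h$. In general one has $\left(\pi^{\ast}\operatorname*{Vol}_h\right)\left(e_1,\dots,e_k\right)=\pm J\left(m\right)$, where $J\left(m\right):=\sqrt{\det\left(\pi_{\ast m}\pi_{\ast m}^{\operatorname{tr}}\right)}$ is the normal Jacobian, so with your choice $\gamma_m=\operatorname*{Vol}_{\mathcal{V}_m}$ you would get $\left(\pi^{\ast}\operatorname*{Vol}_h\right)\wedge\gamma=\pm J\operatorname*{Vol}_{g}$, and property (1) fails wherever $J\neq 1$. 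This is not a corner case in this paper: the whole setting (the end point map with the damped metric, and the very formula $\hat{v}=\pi_{\ast m}^{\operatorname{tr}}\left(\pi_{\ast m}\pi_{\ast m}^{\operatorname{tr}}\right)^{-1}v$) is built around $J\not\equiv 1$.

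The repair is local, and your own uniqueness paragraph already exhibits it: evaluating condition (1) on the adapted frame forces the scalar to be $c\left(m\right)=\pm J\left(m\right)^{-1}$, so the correct form is $\gamma_m=\pm J\left(m\right)^{-1}\operatorname*{Vol}_{\mathcal{V}_m}$. Equivalently, note that the Gram matrix of the lifts satisfies $\left\langle \hat{e}_i,\hat{e}_j\right\rangle_{g}=\left\langle \left(\pi_{\ast m}\pi_{\ast m}^{\operatorname{tr}}\right)^{-1}e_i,e_j\right\rangle_{h}$, so the horizontal parallelepiped spanned by $\hat{e}_1,\dots,\hat{e}_k$ has volume $J\left(m\right)^{-1}$; this is exactly the factor that the paper's contraction formula $\gamma=\operatorname*{Vol}_{g}\left(\hat{e}_1,\dots,\hat{e}_k,\cdot,\dots,\cdot\right)$ builds in automatically. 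With that normalization your verification of (1) and (2) goes through, the sign and orientation bookkeeping you worry about is handled by keeping the frame $\operatorname*{Vol}_h$-normalized rather than choosing a separate vertical orientation, and your argument becomes essentially the paper's proof written out in an adapted frame.
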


\begin{proof} \textbf{Uniqueness. }Assuming such a $\gamma$ exists,
	choose an orthonormal basis $\left\{ e_{1},\dots,e_{k}\right\} $
	for $T_{\pi\left(m\right)}N$ such that $\operatorname*{Vol}_{h}\left(e_{1},\dots,e_{k}\right)=1.$
	Then it follows that 
	\begin{align*}
	\operatorname*{Vol}\nolimits _{g}\left(\hat{e}_{1},\dots,\hat{e}_{k},\cdot,\dots,\cdot\right) & =\left(\pi^{\ast}\operatorname*{Vol}\nolimits _{h}\right)\left(\hat{e}_{1},\dots,\hat{e}_{k}\right)\wedge\gamma\\
	& =\operatorname*{Vol}\nolimits _{h}\left(\pi_{\ast}\hat{e}_{1},\dots,\pi_{\ast}\hat{e}_{k}\right)\wedge\gamma\\
	& =\operatorname*{Vol}\nolimits _{h}\left(e_{1},\dots,e_{k}\right)\wedge\gamma=\gamma
	\end{align*}
	which shows $\gamma$ is unique if it exists.
	
	\textbf{Existence. }Now suppose that $\left\{ e_{1},\dots,e_{k}\right\} $
	is a local orthonormal frame on $M$ in a neighborhood of $\pi\left(m\right)$
	such that $\operatorname*{Vol}_{h}\left(e_{1},\dots,e_{k}\right)=1.$
	Then by above we must define 
	\[
	\gamma:=\operatorname*{Vol}\nolimits _{g}\left(\hat{e}_{1},\dots,\hat{e}_{k},\cdot,\dots,\cdot\right)\text{ in a neighborhood of }m.
	\]
	It is now straightforward to check that this $\gamma$ has the desired
	properties and is defined independent of the choice of frame. \end{proof}

\begin{corollary} \label{cor.A.2}If $X\in\Gamma\left(TN\right)$
	and $\hat{X}\in\Gamma\left(TM\right)$ is its lift as described above,
	then 
	\[
	\operatorname*{div}\nolimits _{g}\left(\hat{X}\right)=\operatorname*{div}\nolimits _{h}\left(X\right)\circ\pi+\rho_{\hat{X}}
	\]
	where $\rho_{\hat{X}}\left(m\right)$ is a function on $M$ depending
	only on $\hat{X}\left(m\right).$ \{To compute $\rho_{\hat{X}}$ explicitly
	will require a better understanding of $d\gamma.]$ \end{corollary}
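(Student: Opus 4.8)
The plan is to compute $\operatorname*{div}_g(\hat{X})$ via Cartan's formula for the Lie derivative of the volume form, and then feed in the factorization $\operatorname*{Vol}_g=(\pi^{\ast}\operatorname*{Vol}_h)\wedge\gamma$ supplied by Lemma \ref{lem.A.1}. Since $\operatorname*{Vol}_g$ is a top-degree form it is closed, so $\mathcal{L}_{\hat{X}}\operatorname*{Vol}_g=d\,i_{\hat{X}}\operatorname*{Vol}_g=(\operatorname*{div}_g\hat{X})\operatorname*{Vol}_g$. First I would apply the Leibniz rule for the Lie derivative across the wedge product, writing $\mathcal{L}_{\hat{X}}\operatorname*{Vol}_g=\bigl(\mathcal{L}_{\hat{X}}\pi^{\ast}\operatorname*{Vol}_h\bigr)\wedge\gamma+(\pi^{\ast}\operatorname*{Vol}_h)\wedge\mathcal{L}_{\hat{X}}\gamma$, and then analyze the two resulting terms separately.

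For the first term I would use that $\hat{X}$ and $X$ are $\pi$-related, i.e. $\pi_{\ast}\hat{X}=X\circ\pi$, which is exactly the defining property of the lift. For $\pi$-related fields one has $\mathcal{L}_{\hat{X}}\pi^{\ast}\omega=\pi^{\ast}\mathcal{L}_X\omega$ for every form $\omega$ on $N$; applying this to $\omega=\operatorname*{Vol}_h$ and using $\mathcal{L}_X\operatorname*{Vol}_h=(\operatorname*{div}_h X)\operatorname*{Vol}_h$ yields $\mathcal{L}_{\hat{X}}\pi^{\ast}\operatorname*{Vol}_h=(\operatorname*{div}_h X\circ\pi)\,\pi^{\ast}\operatorname*{Vol}_h$. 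Wedging with $\gamma$ and recognizing $\operatorname*{Vol}_g$ again, this term is precisely $(\operatorname*{div}_h X\circ\pi)\operatorname*{Vol}_g$, which produces the leading term of the claimed identity.

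For the second term the key is property (2) of $\gamma$ in Lemma \ref{lem.A.1}. Since $\hat{X}(m)=\widehat{X(\pi(m))}$ is, at each point, a vector of the form $\hat{v}$, property (2) gives $i_{\hat{X}}\gamma\equiv 0$. Hence by Cartan's magic formula $\mathcal{L}_{\hat{X}}\gamma=d\,i_{\hat{X}}\gamma+i_{\hat{X}}\,d\gamma=i_{\hat{X}}\,d\gamma$, so the term $d(i_{\hat{X}}\gamma)$ --- the only place where derivatives of $\hat{X}$ could survive --- drops out. The remaining $(\pi^{\ast}\operatorname*{Vol}_h)\wedge i_{\hat{X}}\,d\gamma$ is a $K$-form, hence equals $\rho_{\hat{X}}\operatorname*{Vol}_g$ for a scalar function $\rho_{\hat{X}}$. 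Because the interior product $\bigl(i_{\hat{X}}\,d\gamma\bigr)_m$ uses only the value $\hat{X}(m)$, while $d\gamma$ and $\pi^{\ast}\operatorname*{Vol}_h$ are fixed tensors independent of $X$, the value $\rho_{\hat{X}}(m)$ is an algebraic (in fact linear) functional of $\hat{X}(m)$ alone, as asserted.

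The main point worth isolating --- rather than a genuine obstacle --- is the vanishing $i_{\hat{X}}\gamma=0$: it is what forces $\rho_{\hat{X}}$ to be pointwise-algebraic in $\hat{X}(m)$, even though $\operatorname*{div}_g$ is a first-order operator. Everything else reduces to the two standard identities (naturality of the Lie derivative under $\pi$-related fields, and Cartan's formula) together with the degree bookkeeping showing that the second wedge is genuinely of top degree. An explicit expression for $\rho_{\hat{X}}$ would require unwinding $d\gamma$, which I would not pursue here.
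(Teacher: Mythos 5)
Your proposal is correct and takes essentially the same approach as the paper: both proofs rest on the factorization $\operatorname*{Vol}_{g}=\left(\pi^{\ast}\operatorname*{Vol}_{h}\right)\wedge\gamma$ and the vanishing $i_{\hat{X}}\gamma=0$ from Lemma \ref{lem.A.1}, together with the $\pi$-relatedness $\pi_{\ast}\hat{X}=X\circ\pi$; the paper expands $d\left(i_{\hat{X}}\operatorname*{Vol}_{g}\right)$ directly whereas you expand $\mathcal{L}_{\hat{X}}\operatorname*{Vol}_{g}$ by the Leibniz rule, which via Cartan's formula is the same computation. As a minor bonus, contracting the identically-zero $\left(K+1\right)$-form $\left(\pi^{\ast}\operatorname*{Vol}_{h}\right)\wedge d\gamma$ with $\hat{X}$ shows your $\rho_{\hat{X}}=\bigl(\left(\pi^{\ast}\operatorname*{Vol}_{h}\right)\wedge i_{\hat{X}}\,d\gamma\bigr)/\operatorname*{Vol}_{g}$ agrees with the paper's expression provided its sign $\left(-1\right)^{k}$ is corrected to $\left(-1\right)^{k-1}$ (the contracted form $i_{\hat{X}}\left(\pi^{\ast}\operatorname*{Vol}_{h}\right)$ has degree $k-1$), a slip your packaging avoids.
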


\begin{proof} From Lemma \ref{lem.A.1} we learn, 
	\begin{align*}
	\operatorname*{div}\nolimits _{g}\left(\hat{X}\right)\operatorname*{Vol}\nolimits _{g} & =d\left[i_{\hat{X}}\operatorname*{Vol}\nolimits _{g}\right]=d\left[i_{\hat{X}}\left(\left(\pi^{\ast}\operatorname*{Vol}\nolimits _{h}\right)\wedge\gamma\right)\right]\\
	& =d\left[\left(i_{\hat{X}}\left(\pi^{\ast}\operatorname*{Vol}\nolimits _{h}\right)\wedge\gamma\right)\right]\\
	& =\left[d\left(i_{\hat{X}}\left(\pi^{\ast}\operatorname*{Vol}\nolimits _{h}\right)\right)\right]\wedge\gamma+\left(-1\right)^{k}\left(i_{\hat{X}}\left(\pi^{\ast}\operatorname*{Vol}\nolimits _{h}\right)\wedge d\gamma\right).
	\end{align*}
	Since 
	\begin{align*}
	i_{\hat{X}}\left(\pi^{\ast}\operatorname*{Vol}\nolimits _{h}\right) & =\left(\pi^{\ast}\operatorname*{Vol}\nolimits _{h}\right)\left(\hat{X},--\right)=\operatorname*{Vol}\nolimits _{h}\left(\pi_{\ast}\hat{X},\pi_{\ast}--\right)\\
	& =\operatorname*{Vol}\nolimits _{h}\left(X\circ\pi,\pi_{\ast}--\right)=\pi^{\ast}\left(i_{X}\operatorname*{Vol}\nolimits _{h}\right)
	\end{align*}
	it follows that 
	\begin{align*}
	d\left(i_{\hat{X}}\left(\pi^{\ast}\operatorname*{Vol}\nolimits _{h}\right)\right) & =d\left(\pi^{\ast}\left(i_{X}\operatorname*{Vol}\nolimits _{h}\right)\right)=\pi^{\ast}\left(d\left(i_{X}\operatorname*{Vol}\nolimits _{h}\right)\right)\\
	& =\pi^{\ast}\left(\operatorname*{div}\nolimits _{h}\left(X\right)\operatorname*{Vol}\nolimits _{h}\right)=\operatorname*{div}\nolimits _{h}\left(X\right)\circ\pi\cdot\pi^{\ast}\operatorname*{Vol}\nolimits _{h}.
	\end{align*}
	Combining these equations then shows, 
	\begin{align*}
	\operatorname*{div}\nolimits _{g}\left(\hat{X}\right)\operatorname*{Vol}\nolimits _{g} & =\operatorname*{div}\nolimits _{h}\left(X\right)\circ\pi\cdot\left(\pi^{\ast}\operatorname*{Vol}\nolimits _{h}\right)\wedge\gamma+\left(-1\right)^{k}\left(i_{\hat{X}}\left(\pi^{\ast}\operatorname*{Vol}\nolimits _{h}\right)\wedge d\gamma\right)\\
	& =\left[\operatorname*{div}\nolimits _{h}\left(X\right)\circ\pi+\rho_{\hat{X}}\right]\cdot\operatorname*{Vol}\nolimits _{g}
	\end{align*}
	where 
	\[
	\rho_{\hat{X}}=\frac{\left(-1\right)^{k}\left(i_{\hat{X}}\left(\pi^{\ast}\operatorname*{Vol}\nolimits _{h}\right)\wedge d\gamma\right)}{\operatorname*{Vol}\nolimits _{g}}.
	\]

\end{proof}

\bibliographystyle{amsplain}
\bibliography{references}

\end{document}